\newcommand{\Pp}{\mathbb{P}}
\newcommand{\E}{\mathbb{E}}
\newtheorem{theorem}{Theorem}
\newtheorem{lemma}[theorem]{Lemma}
\newtheorem{proposition}[theorem]{Proposition}
\newtheorem{corollary}[theorem]{Corollary}
\newtheorem{conjecture}[theorem]{Conjecture}
\theoremstyle{definition}
\newtheorem{definition}[theorem]{Definition}
\newtheorem{example}[theorem]{Example}
\newtheorem{remark}[theorem]{Remark}
\newtheorem{openquestion}[theorem]{Open Question}
\newcommand{\sdom}{\succ_{\mathrm{st}}}
\title{Coin flipping and waiting times paradoxes \\ 
Why fair coins are exceptional}
\author{S{\o}ren Riis\IEEEauthorrefmark{1}\hspace{1em}and\hspace{1em}Mike Paterson\IEEEauthorrefmark{2}%
\thanks{\IEEEauthorrefmark{1}Queen Mary University of London, London, United Kingdom.}%
\thanks{\IEEEauthorrefmark{2}University of Warwick, Coventry, United Kingdom.}}
\begin{document}
\maketitle

\begin{abstract}
Penney’s Ante exhibits non‑transitivity when two target strings race to appear in a \emph{shared} stream of coin tosses. 
We study instead \emph{independent} string races, where each player observes their own \emph{independent and identically distributed} (i.i.d.) coin/die stream (possibly biased), and the winner is the player whose target appears first (under an explicit tie convention).
We derive compact generating‑function formulas for waiting times and a Hadamard–generating‑function calculus for head‑to‑head odds.
Our main theorem shows that for a fair $s$‑sided die, stochastic dominance induces a total pre‑order on \emph{all} strings, ordered by expected waiting time.
For binary coins, we also prove a converse: total comparability under stochastic dominance characterises the fair coin ($p=\tfrac12$), and any bias $p\neq\tfrac12$ yields patterns whose waiting times are \emph{incomparable} under stochastic dominance.
In contrast, bias allows both (i) reversals between mean waiting time and win probability and (ii) non‑transitive cycles; we give explicit examples and certified computational classifications for short patterns.
\end{abstract}

\section{Introduction}

\paragraph*{Brief history and variants}
The game now known as \emph{Penney's Ante} was introduced by Walter Penney in the late 1960s and became widely known through Martin Gardner's \emph{Mathematical Games} column. The classical setting has two players first choosing binary strings (over \(\{\mathrm{H},\mathrm{T}\}\)) of the same length; a single infinite sequence of shared coin tosses is then generated, and the winner is the player whose string occurs first as a contiguous block. The surprising feature is \emph{non-transitivity}: for many choices of the first player's string, the second player can choose a counterstring that is more likely to appear earlier, and these preferences can cycle.

This phenomenon attracted sustained attention from John H.\ Conway and others, connecting to combinatorics on words, automata, and probability. Beyond the shared-coin model, variants have been considered where players do not condition on the same random sequence. In particular, one can let players use \emph{independent} sources of randomness, maybe \emph{different coins}, possibly with different biases. 
In that broader context, various claims have circulated—for example, Nickerson's claim that certain paradoxical behaviours from the shared-coin setting may disappear or be substantially constrained when the coins differ or are independent \cite{nickerson2007penney}. We formalise and analyse such independent-source models below. Classical constructions and efficiency bounds go back to von Neumann, Elias, Knuth–Yao, Peres, Han–Hoshi, and the Bernoulli‑factory line of work \cite{vonNeumann1951,Elias1972,KnuthYao1976,Peres1992,HanHoshi1997,KeaneOBrien1994,NacuPeres2005}. Classical treatments of runs and first‑occurrence phenomena in i.i.d.\ sequences can already be found in Feller \cite{Feller1968,Feller1971}.

Analytically, the classic shared-source game has been studied via string overlaps (borders) and generating functions since Guibas--Odlyzko \cite{Guibas1981}, with earlier popular accounts by Penney and Gardner \cite{Penney1969,Gardner1974}. Very recently, Elizalde and Lin proposed a \emph{permutation} analogue in which the targets are order patterns observed in a \emph{shared} stream of i.i.d.\ real values; they show the game remains non-transitive in that setting~\cite{Elizalde2024}. The present paper takes a complementary direction: we keep the objects as words but fundamentally change the \emph{information structure}—\textbf{each player has their own independent coin or die}, possibly with bias. This shift removes the shared-source coupling of first-hit times and leads to a different structural picture. Our main result is a fairness dichotomy: for a fair $s$-sided die, statistical dominance induces a \emph{total pre-order} on all strings (even across lengths), ordered by expected waiting time (Theorem~\ref{thm:fair-total-order}); in contrast, with bias, we give explicit constructions and general criteria that produce reversals and non-transitive cycles, and we formulate a sharp converse conjecture linking bias to the existence of paradoxical races. Methodologically, we replace large joint Markov chains with a compact Hadamard–generating-function calculus for head-to-head odds, and cleanly separate strict wins from tie conventions.

\begin{theorem}[Fairness dichotomy]
For a fair $s$-sided die, statistical dominance induces a \emph{total pre-order} on all strings (even across lengths), ordered by expected waiting time.
\label{thm:fair-total-order}
\end{theorem}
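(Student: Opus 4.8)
The plan is to prove the sharper one‑sided statement that $\E[T_u]\le\E[T_w]$ forces $\Pp(T_u>n)\le\Pp(T_w>n)$ for every $n$ (equivalently $T_w\sdom T_u$); the total pre‑order, its identification with the expected‑waiting‑time order, and the description of its equivalence classes all follow from this together with the remarks below. First I would set up generating functions: for a word $w$ of length $L$ over a fair $s$‑sided die, $\E[z^{T_w}]=(z/s)^{L}/D_w(z)$ with $D_w(z):=(1-z)C_w(z)+(z/s)^{L}$, where $C_w(z)=\sum_{b\in\beta(w)}(z/s)^{L-b}$ is the scaled autocorrelation polynomial and $\beta(w)\subseteq\{1,\dots,L\}$ the set of border lengths (always containing $L$); equivalently the survival generating function is
$$S_w(z):=\sum_{n\ge0}\Pp(T_w>n)\,z^{n}=\frac{C_w(z)}{D_w(z)}=\frac{1}{1-z+\phi_w(z)},\qquad \phi_w(z):=\frac{(z/s)^{L}}{C_w(z)} .$$
Letting $z\to1$ gives Conway's identity $\E[T_w]=S_w(1)=\sum_{b\in\beta(w)}s^{b}$. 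Since for $s\ge2$ distinct subsets $B$ of the positive integers have distinct sums $\sum_{b\in B}s^{b}$ (these are base‑$s$ digit strings), $\E[T_w]$ determines $\beta(w)$ and hence the whole law of $T_w$; in particular equal expectations give equidistribution, so the one‑sided implication above really is everything, and the statement becomes the combinatorial assertion that the avoidance counts $A_n(w)=s^{n}\Pp(T_w>n)$ are pointwise monotone in the Conway number $\sum_{b\in\beta(w)}s^{b}$.

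The engine is the identity obtained by subtracting the two rational forms; with $N:=L_u+L_w$,
$$S_w(z)-S_u(z)=\Bigl(\sum_{b\in\beta(w)}(z/s)^{N-b}-\sum_{b\in\beta(u)}(z/s)^{N-b}\Bigr)\cdot\frac{1}{D_u(z)D_w(z)} .$$
Two facts feed into it. \emph{(a)} $1/D_v$ has nonnegative coefficients: $(z/s)^{L_v}/D_v(z)=\E[z^{T_v}]$, so $1/D_v(z)=s^{L_v}\sum_{m\ge0}\Pp(T_v=m+L_v)z^{m}$, whence $H(z):=1/(D_u(z)D_w(z))=\sum_n h_nz^n$ also has $h_n\ge0$. \emph{(b)} The easy half of the combinatorics: if $\E[T_u]\le\E[T_w]$ and $b^{*}$ is the largest index on which $\beta(u)$ and $\beta(w)$ differ, the base‑$s$ comparison forces $b^{*}\in\beta(w)\setminus\beta(u)$, so that for every real $x\ge2$,
$$\sum_{b\in\beta(w)}x^{b}-\sum_{b\in\beta(u)}x^{b}\ \ge\ x^{b^{*}}-\sum_{b=1}^{b^{*}-1}x^{b}\ =\ \frac{x^{b^{*}}(x-2)+x}{x-1}\ >\ 0 .$$
Taking $x=s/z$ for $z\in(0,1)$ this gives $\phi_u(z)\ge\phi_w(z)$ and hence $S_u(z)\le S_w(z)$ \emph{pointwise} on $(0,1)$.

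The hard part — and the one I expect to fight hardest with — is to upgrade this pointwise inequality to a coefficientwise one. Reading off the coefficient of $z^{N+j}$, what has to be shown is
$$\sum_{b\in\beta(w)}s^{b}\,h_{j+b}\ \ge\ \sum_{b\in\beta(u)}s^{b}\,h_{j+b}\qquad\text{for all }j\in\mathbb Z\quad(h_{<0}:=0).$$
When $\beta(u)\subseteq\beta(w)$ this follows at once from $h\ge0$, which already settles all \emph{nested} pairs — in particular the basic sub‑order: at a fixed length, strictly richer self‑overlap gives a strictly larger waiting time. The remaining, genuinely two‑dimensional case is incomparable border sets (both same‑length instances such as $10101$ versus $10010$, and many cross‑length ones such as $10$ versus $1100$). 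Here I would: \emph{(i)} constrain the sign pattern of $\sum_b(\mathbf 1_{\beta(w)}-\mathbf 1_{\beta(u)})(b)x^{b}$, using that $\beta(v)$ is the chain generated by iterating ``longest border'' together with the Fine--Wilf theorem, which sharply limits runs of $-1$'s (the extreme configuration being $u$ a constant word and $w$ a bifix‑free word exactly one symbol longer); and \emph{(ii)} for each admissible pattern derive the displayed inequality from a decay estimate on $H$: every real root $\zeta$ of $D_uD_w$ lies in $(1,s]$ (since $D_v(1)=s^{-L_v}>0\ge D_v(s)=(1-s)C_v(s)+1$, as $C_v(s)=|\beta(v)|\ge1$), so the ratios $h_{n+1}/h_n$ tend to $\zeta_{\min}^{-1}\ge 1/s$, and — the crucial coupling — the patterns with long runs of $-1$'s, which force many consecutive terms of $H$ to be dominated, occur exactly when some $L_v$ is large, and that pushes $\zeta_{\min}$ close enough to $1$ for the relevant finite geometric comparison $\sum_{k\ge1}(s\,\zeta_{\min}^{-1})^{-k}\le1$ to hold (with equality only in the limit). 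Once the displayed inequality is established, $S_w-S_u\succeq0$; transitivity makes $\sdom$ a total pre‑order on all strings, and evaluating at $z=1$ identifies it with the order by expected waiting time.
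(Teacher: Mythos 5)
Your route matches the paper's: the same pgf form $c_T(x)=1/(1+(1-x)h_T(x))$, the same factorisation of the difference of survival generating functions, the same ``largest differing border index'' observation to pin down the sign of $h_A-h_B$ on $(0,1)$ (this is the paper's Lemma~\ref{lem:lex-sign-compact}), and the same use of nonnegativity of the $c$-coefficients. So the structure is faithful to the paper's proof as far as it goes.

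The genuine gap is exactly the one you flag: the theorem requires a \emph{coefficientwise} comparison of $S_w-S_u$, and pointwise positivity on $(0,1)$ does not deliver that (for a general power series $1-x+x^2$ is positive on $(0,1)$ with a negative coefficient, and nothing in your argument rules out the analogous behaviour for $\bigl(\sum_{b}(\mathbf{1}_{\beta(w)}-\mathbf{1}_{\beta(u)})(b)\,s^{b}x^{-b}\bigr)\cdot c_uc_w$). The nested case $\beta(u)\subseteq\beta(w)$ is handled, as you say, by nonnegativity of $1/(D_uD_w)$, but the incomparable case --- e.g.\ $\{1,4\}$ versus $\{2,4\}$, realised over a fair coin by $\mathrm{HTTH}$ and $\mathrm{HTHT}$ --- is the generic one, and your programme for it is only a sketch: the Fine--Wilf classification of sign patterns is not carried out, the claimed coupling between $-1$-run lengths and the location of the smallest pole of $1/(D_uD_w)$ is not proved, and the ``finite geometric comparison'' is neither stated precisely nor derived. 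Until the coefficient inequality $\sum_{b\in\beta(w)}s^{b}h_{j+b}\ge\sum_{b\in\beta(u)}s^{b}h_{j+b}$ is shown for all $j$, you have established only the pointwise bound $S_u(z)\le S_w(z)$ on $(0,1)$, which is strictly weaker than stochastic dominance. (For what it is worth, the paper's own proof compresses this same step into the single clause ``so all (nonzero) coefficients of $F_B-F_A$ share that sign'' without citing a lemma that licenses passing from sign on $(0,1)$ to sign of coefficients; your instinct that this is where the real work lives is well founded, but that makes it more, not less, important to actually close the gap.)
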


 Unless stated otherwise, we assume each player's coin has the \emph{same} bias \(p\) for heads, although we also discuss more general asymmetric cases. Because the sources are independent, the waiting-time distributions for different players no longer interfere through a shared source; this fundamentally changes how comparisons of expected waiting times relate to head-to-head winning probabilities. In particular, it opens the door to new paradoxes (and removes others) relative to the shared-coin Penney framework.
We use the term ``paradox'' for two phenomena: (i) a reversal between expected waiting time and head-to-head win probability, and (ii) a non-transitive cycle in the head-to-head tournament.

\medskip
To get our heads around this problem, let us first make some rather trivial observations. For example, suppose one player is using an ordinary six-sided die while the other is using a one-sided coin (always showing ``T''). The winning sequence for the player with the die is ``6'', while it is ``TTTT'' for the player with the coin. Assume that ties count in favour of the die player. This example is a variant of the famous betting game of the Chevalier de M\'er\'e (1607--1684), analysed by Blaise Pascal. The sequence ``TTTT'' occurs deterministically on the fourth toss, whereas the die produces the winning outcome ``6'' after six tosses on average.  Despite this, the die player is more likely to win the game.
This kind of paradoxical comparison between mean time and win odds is a recurring theme in classical sources; see, e.g., \cite[Ch.~XIII]{Feller1968}.

\section{Average waiting times and generating functions}
Consider an \(s\)-sided coin (or die) with sides \(1,2,\ldots ,s\) occurring with probabilities \(p_1,p_2,\ldots ,p_s\) per toss (independently across time).

\smallskip
\noindent\textbf{String probabilities.} For a word \(T=t_1\cdots t_L\) we write \(\Pr(T)=p_{t_1}\cdots p_{t_L}\).

\smallskip
\noindent\textbf{Fairness.} We call the source \emph{fair} if it is uniform, i.e.\ $p_i=1/s$ for all $i=1,\dots,s$ (so for a coin, $p=1/2$).

\begin{definition}[Borders and prefix--suffix lengths]\label{def:borders}
Let \(T\) be a finite string of length \(L\). A \emph{border} of \(T\) is a substring that is both a prefix and a suffix of \(T\). We refer to the lengths \(\ell\in\{1,\dots,L\}\) of these borders as the \emph{prefix--suffix lengths} of \(T\); formally, these are the lengths \(\ell\) such that:
\[
T[1..\ell] \;=\; T[L-\ell+1..L].
\]
We write \(\mathcal{B}(T)\) for the set of all such \(\ell\). (Some authors reserve “proper border’’ for \(\ell<L\); in this paper we \emph{include} \(\ell=L\).) The \emph{border polynomial} is
\[
H_T(y)\;:=\;\sum_{\ell\in\mathcal{B}(T)} y^\ell .
\]
\end{definition}

\begin{example}[Quick examples]
Over \(\{\mathrm{H},\mathrm{T}\}\):
\(\mathcal{B}(\mathrm{HH})=\{1,2\}\),\ 
\(\mathcal{B}(\mathrm{HT})=\{2\}\),\ 
\(\mathcal{B}(\mathrm{TH})=\{2\}\),\
\(\mathcal{B}(\mathrm{TT})=\{1,2\}\),\
\(\mathcal{B}(\mathrm{HTHT})=\{2,4\}\).
\end{example}

Let \(X_1,X_2,\ldots\) be i.i.d.\ with \(\Pr(X_n=i)=p_i\) on \(\{1,2,\ldots,s\}\).
For a finite string \(U=u_1u_2\cdots u_m\), let
\(\Pr(U):=\prod_{j=1}^m p_{u_j}\) denote the probability that this block appears “straight away’’ (on the next \(m\) tosses).
For a target string \(T\) of length \(L\), define the waiting time (first-hit time)
\[
\tau_T:=\min\{n\ge L:\; (X_{n-L+1},\ldots,X_n)=T\}.
\]
Furthermore, let \(c_T(x)\) be the probability generating function (pgf) of the waiting time, i.e.
\(c_T(x)=\mathbb{E}[x^{\tau_T}]=\sum_{n=1}^\infty a_n x^n\) where \(a_n=\Pr(\tau_T=n)\) is the probability that \(T\) first occurs after exactly \(n\) tosses.

\begin{theorem}\label{th1}
Let \(\mathcal{B}(T)=\{\ell_1<\ell_2<\cdots<\ell_r\}\) and set \(T_j:=T[1..\ell_j]\) for \(j=1,\ldots,r\).
The generating function \(c_T(x)\) is
\[
c_T(x)\;=\;\frac{1}{\,1+(1-x)\,h_T(x)\,}
\]
where
\[
h_T(x)= \frac{1}{\Pr(T_1)\, x^{\ell_1}} +  \frac{1}{\Pr(T_2)\, x^{\ell_2}} + \cdots +  \frac{1}{\Pr(T_r)\, x^{\ell_r}}.
\]
In particular, the expected waiting time is
\[
 \mathbb{E}[\tau_T] \;=\; \frac{1}{\Pr(T_1)} + \frac{1}{\Pr(T_2)} + \cdots +\frac{1}{\Pr(T_r)}.
\]
\end{theorem}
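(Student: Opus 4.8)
The plan is to set up a renewal-type ("last occurrence") recursion comparing, at each time $n$, the event that the target $T$ has occurred by time $n$ with the event that $T$ occurs *exactly* at position $n$ as a freshly completed block. Concretely, I would fix a long random string $X_1 X_2 \cdots X_N$ and count, in two ways, the ways a copy of $T$ can sit ending at some position $\le N$. For the first-occurrence generating function $c_T(x)=\sum_n \Pr(\tau_T=n)x^n$, the standard trick (Guibas--Odlyzko, also Feller) is the identity: "either $T$ has already appeared and we then see an arbitrary continuation, or we are currently in the middle of building $T$ for the first time, overlapping the previous first occurrence." In generating-function form this reads
\[
\frac{1}{1-x}\cdot \Pr(T)\,x^{L} \;=\; c_T(x)\cdot\Big(\text{correlation polynomial of }T\text{ with itself, in }x\Big),
\]
where the correlation (autocorrelation) polynomial encodes exactly the borders of $T$. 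I would make this precise by defining $S(x)=\sum_{\ell\in\mathcal B(T)} \Pr(T[\ell+1..L])\,x^{L-\ell}$, the weighted autocorrelation polynomial, and proving the renewal identity
\[
c_T(x)\cdot S(x) \;=\; \Pr(T)\,x^{L}\cdot\frac{1}{1-x}.
\]

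The second step is bookkeeping: solve for $c_T(x)=\dfrac{\Pr(T)\,x^{L}}{(1-x)\,S(x)}$ and then massage $S(x)$ into the stated form. Writing each border length as $\ell_j$ and noting $\Pr(T)=\Pr(T_j)\cdot\Pr(T[\ell_j+1..L])$, divide numerator and denominator by $\Pr(T)\,x^{L}$; the term of $S(x)/(\Pr(T)x^L)$ coming from border $\ell_j$ becomes $\dfrac{1}{\Pr(T_j)\,x^{\ell_j}}$, which is exactly the $j$-th summand of $h_T(x)$. This yields $c_T(x)=\dfrac{1}{(1-x)\,h_T(x)}$. To get the claimed form with the "$1+$" in the denominator, I would observe that the $\ell_r=L$ border contributes the term $\dfrac{1}{\Pr(T)\,x^{L}}$ to $h_T(x)$, and separate that term; equivalently, one checks directly that $1+(1-x)h_T(x) = (1-x)\cdot\big(h_T(x) + \tfrac{1}{1-x}\big)$ does *not* quite match, so instead I would be careful to track a "$+1$" that arises because the empty continuation (the occurrence of $T$ itself at the very end) is handled separately from proper overlaps — this is the standard "$1 + \cdots$" normalization in the Guibas--Odlyzko formula, and I would verify it by checking the constant term $[x^0]$ and $[x^L]$ of both sides.

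For the expectation, I would apply $\mathbb E[\tau_T]=c_T'(1)$, or more cleanly $\mathbb E[\tau_T]=\lim_{x\to 1}\frac{1-c_T(x)}{1-x}$ after noting $c_T(1)=1$ (the target occurs almost surely, since all $p_i>0$). From $c_T(x)=\dfrac{1}{1+(1-x)h_T(x)}$ we get $1-c_T(x)=\dfrac{(1-x)h_T(x)}{1+(1-x)h_T(x)}$, so $\dfrac{1-c_T(x)}{1-x}\to h_T(1)=\sum_j \dfrac{1}{\Pr(T_j)}$ as $x\to 1$, giving the formula. I would also sanity-check against the two running examples: for $T=\mathrm{HH}$ with a fair coin, $\mathcal B=\{1,2\}$ gives $\mathbb E[\tau]=2+4=6$, and for $T=\mathrm{HT}$, $\mathcal B=\{2\}$ gives $\mathbb E[\tau]=4$, both classical.

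The main obstacle I anticipate is purely combinatorial rather than probabilistic: getting the autocorrelation identity and the "$+1$" normalization *exactly* right, including the boundary contributions from the full-length border $\ell_r=L$ and making sure the sum over borders in $h_T$ uses the prefix probabilities $\Pr(T_j)$ (not suffix probabilities), which relies on the interplay $\Pr(T)=\Pr(T_j)\Pr(T[\ell_j+1..L])$ together with the i.i.d.\ (memoryless) structure. Everything else — convergence of the series for $|x|<1$, $c_T(1)=1$, differentiability at $x=1$ — follows from standard renewal theory once the algebraic identity is in place. An alternative, fully rigorous route that avoids the correlation-polynomial juggling is a first-step / Markov-chain argument on the "progress automaton" (the KMP failure-function automaton of $T$): set up the linear system for the generating functions $g_i(x)$ of the time to absorption from state $i$, and solve it; the border structure of $T$ is exactly what encodes the back-edges of this automaton, so the same $h_T$ emerges. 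I would present the renewal/generating-function proof as the main argument and mention the automaton computation as a cross-check.
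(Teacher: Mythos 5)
Your route --- the renewal/Guibas--Odlyzko identity --- is exactly one of the classical derivations the paper defers to (the paper does not prove Theorem~\ref{th1}; it cites Feller for renewal arguments, Li for optional stopping, and Gerber--Li/Fu--Koutras for Markov-chain embeddings). Your Abelian-limit computation of \(\mathbb{E}[\tau_T]=h_T(1)\), the sanity checks, and the automaton cross-check are all sound. However, the renewal identity you write down, \(c_T(x)\,S(x)=\Pr(T)x^L/(1-x)\), is not correct, and your diagnosis of the ``\(+1\)'' (separating the \(\ell_r=L\) border term of \(h_T\)) misidentifies its source --- a coefficient check would have told you the discrepancy is a whole extra series, not just one monomial.

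Here is the correct decomposition. For each \(n\ge L\), split the event \(\{(X_{n-L+1},\dots,X_n)=T\}\) (probability \(\Pr(T)\)) by the value \(m=\tau_T\le n\). If \(m\le n-L\), the occurrence ending at \(n\) lies entirely in positions \(>m\), so by independence the contribution is \(a_m\,\Pr(T)\). If \(n-L<m\le n\), set \(\ell:=L-(n-m)\in\{1,\dots,L\}\); this forces \(\ell\in\mathcal{B}(T)\), and the contribution is \(a_m\,\Pr(T[\ell+1..L])\). Multiplying by \(x^n\) and summing over \(n\ge L\) gives
\[
\frac{\Pr(T)\,x^L}{1-x}
\;=\;c_T(x)\,\frac{\Pr(T)\,x^L}{1-x}\;+\;c_T(x)\,S(x),
\qquad
S(x)=\sum_{\ell\in\mathcal{B}(T)}\Pr\bigl(T[\ell+1..L]\bigr)\,x^{L-\ell},
\]
i.e.\ \(c_T(x)\bigl(\Pr(T)x^L+(1-x)S(x)\bigr)=\Pr(T)x^L\). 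Dividing by \(\Pr(T)x^L\) and using \(\Pr(T)=\Pr(T[1..\ell])\,\Pr(T[\ell+1..L])\) gives \(1/c_T(x)=1+(1-x)h_T(x)\). The ``\(+1\)'' is therefore the image of the disjoint-occurrence term \(c_T(x)\Pr(T)x^L/(1-x)\) --- precisely the term your stated identity omits --- not the \(\ell=L\) summand of \(h_T\). With this correction the rest of your proposal (including \(\mathbb{E}[\tau_T]=h_T(1)\) and the automaton cross-check) goes through unchanged.
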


\begin{remark}[Rewriting \(h_T\) and the expectation]
With this notation, the function \(h_T(x)\) in Theorem~\ref{th1} can be written as
\[
h_T(x)\;=\;\sum_{\ell\in\mathcal{B}(T)} \frac{1}{\Pr(T[1..\ell])\,x^\ell},
\]
and in the \emph{fair \(s\)-sided} case we have
\(
h_T(x)=\sum_{\ell\in\mathcal{B}(T)} \big(\tfrac{s}{x}\big)^{\!\ell}
\)
and
\(
\mathbb{E}[\tau_T]=H_T(s)=\sum_{\ell\in\mathcal{B}(T)} s^\ell
\).
\end{remark}
\begin{remark}[Provenance and proof sketch]
Classical derivations of single‑pattern waiting‑time pgf’s and means via renewal arguments already appear in Feller \cite{Feller1968,Feller1971}; concise martingale/optional‑stopping proofs are given by Li \cite{Li1980}, while Markov‑chain (hitting‑time) embeddings are developed in \cite{GerberLi1981}. The finite‑Markov‑chain (DFA) methodology and related gambling‑teams viewpoint provide alternative routes to the same formulas and their extensions \cite{FuKoutras1994,PozdnyakovEtAl2006}.
\end{remark}

\begin{corollary}\label{col01}
Assume the coin (die) is \(s\)-sided and each side occurs with probability \(1/s\). Then
\[
c_T(x)= \frac{1}{1+(1-x) \left(\left(\tfrac{s}{x}\right)^{\ell_1} +  \left(\tfrac{s}{x}\right)^{\ell_2} + \cdots +  \left(\tfrac{s}{x}\right)^{\ell_r}\right)}.
\]
In particular, \(\mathbb{E}[\tau_T]=s^{\ell_1} + s^{\ell_2}+ \cdots + s^{\ell_r}\).
\end{corollary}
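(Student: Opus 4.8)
The plan is to obtain Corollary~\ref{col01} as a direct specialization of Theorem~\ref{th1}, so essentially all of the substantive work is already done; what remains is to evaluate the general formulas at the uniform distribution. The one structural observation to record is that for a fair $s$-sided source the probability of \emph{any} fixed word $U=u_1\cdots u_m$ is
\[
\Pr(U)=\prod_{j=1}^{m}p_{u_j}=\prod_{j=1}^{m}\frac1s=s^{-m},
\]
so $\Pr(U)$ depends only on the \emph{length} $m$ of $U$ and not on which symbols appear in it. This is the only place where fairness is used.

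Applying this with $U=T_j:=T[1..\ell_j]$ gives $\Pr(T_j)=s^{-\ell_j}$ for each $j=1,\dots,r$. Substituting into the expression for $h_T(x)$ from Theorem~\ref{th1}, each summand becomes
\[
\frac{1}{\Pr(T_j)\,x^{\ell_j}}=\frac{s^{\ell_j}}{x^{\ell_j}}=\Big(\frac{s}{x}\Big)^{\ell_j},
\]
so $h_T(x)=\sum_{j=1}^{r}(s/x)^{\ell_j}$, and plugging this into $c_T(x)=1/\bigl(1+(1-x)h_T(x)\bigr)$ yields the claimed closed form for $c_T(x)$. For the mean, I would substitute the same values into the expectation formula $\mathbb{E}[\tau_T]=\sum_{j}1/\Pr(T_j)$ of Theorem~\ref{th1}, obtaining $\mathbb{E}[\tau_T]=\sum_{j}s^{\ell_j}$; as a consistency check one can instead recover this from $c_T$ directly, since differentiating $c_T(x)=1/\bigl(1+(1-x)h_T(x)\bigr)$ at $x=1$ gives $c_T'(1)=h_T(1)=\sum_{j}s^{\ell_j}$, and the two routes agree.

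There is no genuine obstacle here beyond bookkeeping: the entire content sits in Theorem~\ref{th1}, and the corollary is just the remark that in the uniform case $\Pr(T_j)$ collapses to $s^{-\ell_j}$, so that both $c_T$ and $\mathbb{E}[\tau_T]$ become functions of the border-length set $\mathcal{B}(T)=\{\ell_1,\dots,\ell_r\}$ alone — in particular $\mathbb{E}[\tau_T]=H_T(s)$ in the notation of Definition~\ref{def:borders}. If a reader wanted a self-contained argument not invoking Theorem~\ref{th1}, the cleanest route would be a renewal/first-occurrence decomposition (or an optional-stopping martingale argument): condition on the first completion of the block $T$ and peel off overlaps of lengths $\ell\in\mathcal{B}(T)$, each contributing a factor $s^{-\ell}$ under fairness. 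Given Theorem~\ref{th1}, however, this is unnecessary.
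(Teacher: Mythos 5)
Your proof is correct and is exactly the intended derivation: the paper states Corollary~\ref{col01} without a separate proof because it is the immediate specialization of Theorem~\ref{th1} to the uniform case, obtained by substituting $\Pr(T_j)=s^{-\ell_j}$ into $h_T$ and into the expectation formula, just as you do. The consistency check via $c_T'(1)=h_T(1)$ is a nice touch but not needed.
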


Analogous waiting‑time expressions and reductions hold for multistate (non‑binary) trials; see \cite{Antzoulakos2001}.

\begin{corollary}
For a fair \(s\)-sided die, \(\mathbb{E}[\tau_T]\) is a multiple of \(s\) and uniquely determines the set of prefix–suffix lengths. In turn, the prefix–suffix lengths uniquely determine \(c_T(x)\) and thus the full waiting‑time distribution.
\end{corollary}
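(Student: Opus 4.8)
The plan is to derive everything from Corollary~\ref{col01}, which in the fair case gives, for $\mathcal{B}(T)=\{\ell_1<\cdots<\ell_r\}$,
\[
\mathbb{E}[\tau_T]=s^{\ell_1}+\cdots+s^{\ell_r},\qquad c_T(x)=\frac{1}{\,1+(1-x)\bigl((s/x)^{\ell_1}+\cdots+(s/x)^{\ell_r}\bigr)\,}.
\]
The statement then breaks into a chain of one-line implications:
\[
\mathbb{E}[\tau_T]\;\longrightarrow\;\mathcal{B}(T)\;\longrightarrow\;c_T(x)\;\longrightarrow\;\bigl(\Pr(\tau_T=n)\bigr)_{n\ge1}.
\]

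First, the divisibility and the arrow $\mathbb{E}[\tau_T]\to\mathcal{B}(T)$. Since $T$ has length $L\ge1$ and $L\in\mathcal{B}(T)$, the set $\mathcal{B}(T)$ is nonempty and every $\ell_i\ge1$; hence each $s^{\ell_i}$, and so their sum $\mathbb{E}[\tau_T]$, is divisible by $s$. For recovery of $\mathcal{B}(T)$, assume $s\ge2$ (the case $s=1$ is trivial: there is exactly one string of each length, with $\mathcal{B}(1^L)=\{1,\dots,L\}$ recoverable from $\mathbb{E}[\tau_T]=L$). Because the exponents $\ell_i$ are pairwise distinct, $s^{\ell_1}+\cdots+s^{\ell_r}$ is precisely the base-$s$ numeral having digit $1$ in place $\ell$ for each $\ell\in\mathcal{B}(T)$ and digit $0$ elsewhere; all these digits lie in $\{0,1\}\subseteq\{0,\dots,s-1\}$, so no carrying occurs and this is the unique base-$s$ representation. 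By uniqueness of base-$s$ expansions, the integer $\mathbb{E}[\tau_T]$ determines which places carry a $1$, i.e.\ it determines $\mathcal{B}(T)$. (Equivalently, and base-free: $\ell_1$ is the $s$-adic valuation of $\mathbb{E}[\tau_T]$, and one recurses on $\mathbb{E}[\tau_T]-s^{\ell_1}$.)

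For the arrow $\mathcal{B}(T)\to c_T(x)$, the closed form from Corollary~\ref{col01} displayed above shows $c_T(x)$ is an explicit rational function of $x$ whose only inputs are the fixed $s$ and the exponents $\ell_1,\dots,\ell_r$; hence $\mathcal{B}(T)$ determines $c_T(x)$. For the last arrow, $c_T(x)=\mathbb{E}[x^{\tau_T}]=\sum_{n\ge1}\Pr(\tau_T=n)\,x^n$ is the probability generating function of $\tau_T$, and a power series determines its coefficients, so $c_T(x)$ determines every $\Pr(\tau_T=n)$, i.e.\ the entire waiting-time distribution. Composing the three arrows yields the corollary.

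I do not expect a genuine obstacle here: each step is a direct consequence of Corollary~\ref{col01}. The only points requiring care are bookkeeping ones — using that $\mathcal{B}(T)$ is nonempty (it contains $L$) so that divisibility by $s$ and positivity hold, using that the $\ell_i$ are distinct so that the base-$s$ argument has no carries, and restricting to $s\ge2$ for base-$s$ uniqueness to be meaningful while disposing of $s=1$ separately.
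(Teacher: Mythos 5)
Your proof is correct and takes essentially the same approach as the paper: divisibility and recovery of $\mathcal{B}(T)$ via uniqueness of the base-$s$ expansion (the exponents are distinct, so no carries), then determination of $c_T(x)$ from Corollary~\ref{col01}. The only addition is your explicit handling of the degenerate case $s=1$, which the paper leaves implicit.
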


\begin{proof}
In the fair case, \(\mathbb{E}[\tau_T]=\sum_{\ell\in\mathcal{B}(T)} s^\ell\), which is divisible by \(s\). Moreover, base‑\(s\) expansion is unique, so the representation as a sum of \emph{distinct} powers of \(s\) uniquely determines the subset \(\mathcal{B}(T)\subseteq\{1,\ldots,|T|\}\). By Corollary~\ref{col01}, \(\mathcal{B}(T)\) determines \(c_T(x)\).
\end{proof}

\begin{example}
The string \(T=\text{``HTHT''}\) has prefix–suffix lengths \(2\) and \(4\); for a fair coin \(\mathbb{E}[\tau_T]=2^2+2^4=20\).
The string \(T=\text{``13166131''}\) has prefix–suffix lengths \(1,3,8\); for a fair six‑sided die,
\(\mathbb{E}[\tau_T]=6^1+6^3+6^8=1{,}679{,}838\).
\end{example}

\section{Generating functions, stochastic dominance and paradoxes}\label{sec:stoch-dom}

\subsection{Setup and notation}
We continue with the i.i.d.\ \(s\)-sided die and the notation from the previous section. Let \(\mathcal{A}=\{1,2,\ldots,s\}\) be the alphabet of die faces, with \(\Pr(X_n=a)=p_a\) for \(a\in\mathcal{A}\). For a string \(T\in\mathcal{A}^\ast\), let \(W_T\) denote its waiting time and \(c_T(x)=\mathbb{E}[x^{W_T}]\) its probability generating function. We say that \(W_{T_1}\) \emph{statistically (stochastically) dominates} \(W_{T_2}\) (write \(W_{T_1}\sdom W_{T_2}\)) if \(\Pr(W_{T_1}>n)\ge \Pr(W_{T_2}>n)\) for all \(n\ge 0\), with strict inequality for some \(n\). If \(\Pr(W_{T_1}>n)=\Pr(W_{T_2}>n)\) for all \(n\ge0\), we say that \(W_{T_1}\) and \(W_{T_2}\) are \emph{statistically equivalent}.

Throughout this section we use the representation from Theorem~\ref{th1}:
\begin{equation}\label{eq:main-c-form}
  c_T(x)\;=\;\frac{1}{\,1+(1-x)\,h_T(x)\,},\qquad 0\le x<1,
\end{equation}
where \(h_T(x)\) is the pattern–dependent series specified in Theorem~\ref{th1}. An alternative route to \eqref{eq:main-c-form} uses optional stopping for a suitable martingale or a hitting‑time embedding for the pattern automaton; see~\cite{Li1980,GerberLi1981}.

We regard \eqref{eq:main-c-form} as an identity of formal power series in \(x\) (valid for any die with full support, i.e.\ \(p_a>0\) for all \(a\in\mathcal{A}\)). Explicitly,

\[
  h_T(x)\;=\;\sum_{\ell\in\mathcal{B}(T)} \frac{1}{\Pr\bigl(T[1..\ell]\bigr)}\,x^{-\ell},
\]
i.e. a finite Laurent polynomial in \(x^{-1}\) with positive coefficients. In the fair \(s\)-sided case this
simplifies to
\[
  h_T(x)\;=\;H_T\!\left(\frac{s}{x}\right)\;=\;\sum_{\ell\in\mathcal{B}(T)}
  \left(\frac{s}{x}\right)^{\!\ell},
\]
so the coefficients are nonnegative integers determined solely by the border structure. Whenever we
invoke coefficientwise comparisons of \(h\)-polynomials later on, we do so under this fair–die
specialisation (or we state the additional hypothesis explicitly).

\subsection{A basic algebraic lemma}
\begin{lemma}[Difference factorisation]\label{lem:factor}
Let \(T_1,T_2\) be two strings with generating functions \(c_i(x)=\frac{1}{1+(1-x)h_i(x)}\) and
polynomials \(h_i(x)\) as in Theorem~\ref{th1}. Then, for all \(x\in[0,1)\),
\[
  c_1(x)-c_2(x)\;=\;(1-x)\,\bigl(h_2(x)-h_1(x)\bigr)\,c_1(x)\,c_2(x).
\]
\end{lemma}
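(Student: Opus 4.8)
The claimed identity is a one-line algebraic consequence of the common shape $c_i=1/D_i$ with $D_i(x):=1+(1-x)\,h_i(x)$, so the plan is simply to carry out that manipulation, after a preliminary remark that makes the division by $D_i$ legitimate on $[0,1)$.

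First I would record that the $D_i$ are invertible on the relevant domain. For $x\in(0,1)$ each $h_i(x)$ is a finite sum of positive multiples of positive powers of $x^{-1}$, hence a positive real number, so $D_i(x)=1+(1-x)h_i(x)>1>0$ and $c_i(x)=1/D_i(x)$ is a well-defined positive real. Equivalently, one may read the whole computation as an identity of formal power series in $x$: since $L\in\mathcal{B}(T)$, the Laurent polynomial $h_T$ has lowest-order term $x^{-L}$, while $c_T$ begins at order $x^{L}$, so all products below are genuine power series and $D_T$ is a unit in $\mathbb{Q}[[x]]$. Either reading also covers the endpoint $x=0$ (both sides then have zero constant term), so it suffices to prove the identity on $(0,1)$.

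Next, subtract the two fractions: $c_1(x)-c_2(x)=\dfrac{1}{D_1(x)}-\dfrac{1}{D_2(x)}=\dfrac{D_2(x)-D_1(x)}{D_1(x)\,D_2(x)}$. From the definition of $D_i$ the constant terms $1$ cancel, leaving $D_2(x)-D_1(x)=(1-x)h_2(x)-(1-x)h_1(x)=(1-x)\bigl(h_2(x)-h_1(x)\bigr)$. Finally $\dfrac{1}{D_1(x)D_2(x)}=\dfrac{1}{D_1(x)}\cdot\dfrac{1}{D_2(x)}=c_1(x)c_2(x)$, and combining the last two displays gives $c_1(x)-c_2(x)=(1-x)\bigl(h_2(x)-h_1(x)\bigr)c_1(x)c_2(x)$, as claimed.

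There is essentially no obstacle here; the only point that deserves a word of care is the one already flagged — justifying that $1/D_i$ makes sense (nonvanishing denominator on $[0,1)$, or a unit in the formal power-series ring), so that ``subtract the two fractions and simplify'' is a valid move and the Laurent part $h_i$ is handled consistently across the computation. Everything else is bookkeeping, and in fact the identity holds verbatim as an equality of rational functions in $x$ (equivalently of formal power series), the restriction $x\in[0,1)$ in the statement being only what is needed for the probabilistic applications that follow.
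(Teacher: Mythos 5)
Your proof is correct and takes essentially the same route as the paper: both clear denominators in $c_i=1/(1+(1-x)h_i)$ and subtract, the only cosmetic difference being that the paper writes $\tfrac{1}{c_2}-\tfrac{1}{c_1}$ first and then multiplies by $c_1c_2$, whereas you combine the fractions $\tfrac{1}{D_1}-\tfrac{1}{D_2}$ over the common denominator directly. Your extra remark justifying invertibility is welcome, though technically $D_T$ contains negative powers of $x$ (since $h_T$ is a Laurent polynomial in $x^{-1}$), so $D_T$ is a unit in the field of formal Laurent series $\mathbb{Q}((x))$ rather than in $\mathbb{Q}[[x]]$; this does not affect the argument.
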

\begin{proof}
Compute
\[
  \frac{1}{c_2}-\frac{1}{c_1}=(1+(1-x)h_2)-(1+(1-x)h_1)
  =(1-x)(h_2-h_1).
\]
Multiplying by \(c_1(x)c_2(x)\) yields the claimed identity.
\end{proof}

It is often convenient to work with the \emph{tail} generating function
\(T_T(x):=\sum_{n\ge 0}\Pr(W_T>n)\,x^n\). Since \(c_T(x)=\sum_{n\ge1}\Pr(W_T=n)x^n
=1-(1-x)T_T(x)\), Lemma~\ref{lem:factor} is equivalent to
\begin{equation}\label{eq:tails-factor}
  T_{T_2}(x)-T_{T_1}(x)\;=\;\bigl(h_2(x)-h_1(x)\bigr)\,c_1(x)\,c_2(x).
\end{equation}

\begin{remark}[Mean waiting time]
Differentiating \eqref{eq:main-c-form} at \(x=1^{-}\) gives
\[
  c_T'(1)=\mathbb{E}[W_T]=h_T(1).
\]
Thus, ordering by \(\mathbb{E}[W_T]\) is the same as ordering by \(h_T(1)\).
\end{remark}

\subsection{Stochastic dominance versus expectation under a fair die}
For a fair die, the coefficients of \(h_T(x)\) admit a purely combinatorial description in terms
of the borders of \(T\); in particular, each coefficient is a nonnegative integer.

\begin{theorem}[Dominance implies larger mean]\label{thm:dom-implies-mean}
For any die (not necessarily fair) and any strings \(T_1,T_2\), if \(W_{T_1}\sdom W_{T_2}\) then
\(\mathbb{E}[W_{T_1}]>\mathbb{E}[W_{T_2}]\).
\end{theorem}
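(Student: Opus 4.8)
The plan is to reduce the statement to the elementary tail-sum representation of the mean of a nonnegative integer-valued random variable. Since $W_{T_1}$ and $W_{T_2}$ take values in $\{1,2,\dots\}$, we have $\mathbb{E}[W_{T_i}]=\sum_{n\ge 0}\Pr(W_{T_i}>n)$. First I would record that both series converge: by Theorem~\ref{th1}, applied to the given die (which we take to have full support, so that $\Pr(T_i[1..\ell])>0$), each mean equals the \emph{finite} sum $\sum_{\ell\in\mathcal{B}(T_i)}1/\Pr(T_i[1..\ell])$. This finiteness is the only point that needs a word of care, since it is what makes the termwise comparison of the two tail sums legitimate; everything else is bookkeeping.

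Next I would invoke the definition of $\sdom$ directly. By hypothesis $\Pr(W_{T_1}>n)\ge\Pr(W_{T_2}>n)$ for every $n\ge 0$, and there is some $n_0$ with $\Pr(W_{T_1}>n_0)>\Pr(W_{T_2}>n_0)$. Subtracting the two finite tail sums term by term and discarding the remaining nonnegative terms gives
\[
  \mathbb{E}[W_{T_1}]-\mathbb{E}[W_{T_2}]
  =\sum_{n\ge 0}\bigl(\Pr(W_{T_1}>n)-\Pr(W_{T_2}>n)\bigr)
  \;\ge\;\Pr(W_{T_1}>n_0)-\Pr(W_{T_2}>n_0)\;>\;0,
\]
which is exactly the claim.

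Equivalently, one can phrase the same computation through the tail generating function used in \eqref{eq:tails-factor}: the dominance hypothesis says precisely that the coefficient sequence of $T_{T_1}(x)-T_{T_2}(x)$ is nonnegative and not identically zero, hence $T_{T_1}(x)>T_{T_2}(x)$ for all $x\in(0,1)$; letting $x\to 1^{-}$ and using $\mathbb{E}[W_{T_i}]=T_{T_i}(1)=h_{T_i}(1)$ recovers $\mathbb{E}[W_{T_1}]\ge\mathbb{E}[W_{T_2}]$, with the strict inequality again supplied by the single witnessing coefficient. So there is no real obstacle here — the statement is a soft consequence of the definitions once finiteness is in hand. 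It is worth flagging, though, that the converse is false: $\sdom$ is strictly stronger than comparison of means, and that gap is exactly what leaves room for the reversals and non-transitive cycles analysed in the remainder of the section.
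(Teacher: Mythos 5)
Your proof is correct and matches the paper's argument essentially verbatim: both use the tail-sum identity $\mathbb{E}[W_{T_i}]=\sum_{n\ge 0}\Pr(W_{T_i}>n)$ and compare termwise via the definition of $\sdom$. The only difference is that you explicitly flag finiteness of the tail sums (via Theorem~\ref{th1}) so that the strict termwise inequality survives the summation, a small point the paper's one-line proof passes over in silence.
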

\begin{proof}
Stochastic dominance gives \(\Pr(W_{T_1}>n)\ge \Pr(W_{T_2}>n)\) for all \(n\ge0\), with strict inequality for some \(n\). Summing over \(n\ge0\) yields
\(\sum_{n\ge0}\Pr(W_{T_1}>n)>\sum_{n\ge0}\Pr(W_{T_2}>n)\), i.e.\ \(T_{T_1}(1)>T_{T_2}(1)\). But
\(T_T(1)=\sum_{n\ge0}\Pr(W_T>n)=\mathbb{E}[W_T]\).
\end{proof}

\begin{remark}[Dominance and mean]
\label{rem:dom-vs-mean}
For any source, stochastic dominance implies larger expectation (Theorem~\ref{thm:dom-implies-mean}).
Under a fair die we will later prove a converse: the mean waiting time totally orders patterns and
determines stochastic dominance; see Theorem~\ref{thm:fair-total-order} and
Corollary~\ref{cor:order-by-expectation-compact}.
\end{remark}

\subsection{Compatibility and total preorders}
Write \(\prec_{\mathrm{st}}\) for the stochastic-dominance strict partial order on strings, i.e.\ \(T_1\prec_{\mathrm{st}}T_2\)
iff \(W_{T_2}\sdom W_{T_1}\). We say that strings of a fixed length \(k\) are \emph{compatible} if
\(\prec_{\mathrm{st}}\) has no directed cycles on \(\mathcal{A}^k\), i.e.\ there do not exist distinct \(T_1,\ldots,T_r\in\mathcal{A}^k\) with
\(T_1\prec_{\mathrm{st}}T_2\prec_{\mathrm{st}}\cdots\prec_{\mathrm{st}}T_r\prec_{\mathrm{st}}T_1\); we say that \(\prec_{\mathrm{st}}\) is a
\emph{strict weak order on all strings} if, for any \(T_1,T_2\), either \(T_1\prec_{\mathrm{st}}T_2\),
\(T_2\prec_{\mathrm{st}}T_1\), or \(T_1\) and \(T_2\) are statistically equivalent.

\begin{proposition}[Border-equivalence]\label{prop:border-equivalence}
If two strings \(T_1,T_2\) have the same multiset of borders with the same letter-counts at each border
length, then \(h_{T_1}(x)=h_{T_2}(x)\) and hence \(c_{T_1}(x)=c_{T_2}(x)\) and \(W_{T_1}\) and \(W_{T_2}\) are
identically distributed.
\end{proposition}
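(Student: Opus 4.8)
The plan is a direct term-by-term comparison of the Laurent polynomials $h_{T_1}$ and $h_{T_2}$, followed by an appeal to the closed form \eqref{eq:main-c-form}. By Theorem~\ref{th1} we have $h_T(x)=\sum_{\ell\in\mathcal{B}(T)}\Pr\bigl(T[1..\ell]\bigr)^{-1}x^{-\ell}$, so $h_T$ is completely determined by the set of border lengths $\mathcal{B}(T)$ together with the numbers $\Pr(T[1..\ell])$ for $\ell\in\mathcal{B}(T)$. The first thing I would record is that $\Pr(U)=\prod_{j}p_{u_j}=\prod_{a\in\mathcal{A}}p_a^{\,n_a(U)}$ depends on a word $U$ only through its letter-count vector $\bigl(n_a(U)\bigr)_{a\in\mathcal{A}}$, where $n_a(U)$ counts the occurrences of $a$ in $U$; in particular $\Pr(U)$ is invariant under permutations of the letters of $U$.

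Next I would unpack the hypothesis. ``The same multiset of borders with the same letter-counts at each border length'' means precisely that $\mathcal{B}(T_1)=\mathcal{B}(T_2)=:\{\ell_1<\cdots<\ell_r\}$ and, for every $j$, the prefixes $T_1[1..\ell_j]$ and $T_2[1..\ell_j]$ have the same letter-count vector. (Taking $j=r$ shows in particular that the two strings have equal length and are anagrams of each other.) Combining this with the previous observation gives $\Pr(T_1[1..\ell_j])=\Pr(T_2[1..\ell_j])$ for each $j$, so the two Laurent polynomials have the same support $\{-\ell_1,\dots,-\ell_r\}$ with matching coefficients, i.e.\ $h_{T_1}(x)=h_{T_2}(x)$ identically. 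Substituting into \eqref{eq:main-c-form} yields $c_{T_1}(x)=c_{T_2}(x)$ as formal power series; equivalently, one may invoke Lemma~\ref{lem:factor} with $h_2-h_1\equiv 0$ to conclude $c_1-c_2\equiv 0$. Finally, since $c_T(x)=\sum_{n\ge1}\Pr(W_T=n)\,x^n$, reading off coefficients recovers the law of $W_T$, so equality of the generating functions forces $W_{T_1}$ and $W_{T_2}$ to be identically distributed.

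There is no genuine obstacle here; the argument is essentially bookkeeping, and the only point that warrants care is the exact role of the letter-count part of the hypothesis. For a general (non-fair) die, two strings sharing the same border \emph{lengths} need not have the same $h$-polynomial, because a border of a given length can be built from different letters in the two strings and hence carry a different probability weight $\Pr(\cdot)$; the stated hypothesis is exactly what rules this out. In the fair $s$-sided case the hypothesis is automatic once $\mathcal{B}(T_1)=\mathcal{B}(T_2)$, since every weight is then $s^{-\ell}$, which is why the later fair-die statements can be phrased purely in terms of border lengths.
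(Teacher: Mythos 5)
Your proof is correct and is essentially the same argument the paper gives, just written out in full detail: the paper's one-line proof ("immediate from the construction of $h_T(x)$...") is exactly the observation that $h_T$ depends only on $\mathcal{B}(T)$ and the letter-counts of the border prefixes, which you unpack by noting $\Pr(U)$ is a function of the letter-count vector alone. Your closing remark correctly identifying why the letter-count hypothesis is needed for biased dice (and automatic for fair ones) is a useful clarification but not a change of method.
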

\begin{proof}
Immediate from the construction of \(h_T(x)\) in Theorem~\ref{th1}, which depends only on the
border structure and the letter-counts realised at those borders.
\end{proof}

Two converse questions suggest themselves. First, does compatibility at every fixed length force fairness? (This remains open in general; see Open Question~\ref{oq:fairness-fixed-length} in Section~\ref{sec:open-problems}.) Second, for coins we prove that total comparability on \emph{all} binary strings characterises fairness; see Theorem~\ref{thm:coin-totality-iff-fair}.

\subsection{Fair dice enforce a total pre-order}
For a string $T$, let $c_T(x)=\sum_{n\ge 1} a_{T,n}x^n$ be the first-hit pgf, and write
\[
F_T(x)\;:=\;\frac{c_T(x)}{1-x}\;=\;\sum_{n\ge 1}\Pr(\tau_T\le n)x^n.
\]
As in Theorem~\ref{th1}, define
\[
h_T(x)\;=\;\sum_{\ell\in\mathcal{B}(T)} \frac{s^\ell}{x^\ell},
\qquad
\text{so that}\qquad
c_B-c_A=(1-x)\,(h_A-h_B)\,c_A c_B .
\]
Dividing by $(1-x)$ yields the factorisation
\begin{equation}\label{eq:fundamental-factor-compact}
F_B(x)-F_A(x)\;=\;\big(h_A(x)-h_B(x)\big)\,c_A(x)\,c_B(x).
\end{equation}

\begin{lemma}[Lexicographic sign rule (fair $s$-die)]
\label{lem:lex-sign-compact}
Put $y=s/x>1$ and $H_T(y):=\sum_{\ell\in\mathcal{B}(T)} y^\ell$. Let $\ell^\star$ be the largest index on which the border-indicator vectors of $T_A,T_B$ differ. Then, for all $x\in(0,1)$,
\[
\operatorname{sign}\!\big(h_A(x)-h_B(x)\big)
=\operatorname{sign}\!\big(H_A(y)-H_B(y)\big)
=\operatorname{sign}\!\big(\mathbf{1}_{\{\ell^\star\in\mathcal{B}(T_A)\}}-\mathbf{1}_{\{\ell^\star\in\mathcal{B}(T_B)\}}\big),
\]
hence the sign is constant on $(0,1)$. Evaluating at $y=s$ gives $H_T(s)=\sum_{\ell\in\mathcal{B}(T)} s^\ell=\mathbb{E}[\tau_T]$.
\end{lemma}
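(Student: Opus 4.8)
The plan is to note first that, for a fair $s$-die, $h_T$ is \emph{literally} the border polynomial evaluated at $y=s/x$: by the remark following Theorem~\ref{th1}, $h_T(x)=\sum_{\ell\in\mathcal{B}(T)}(s/x)^\ell=H_T(y)$. Hence $h_A(x)-h_B(x)=H_A(y)-H_B(y)$ as an \emph{exact} identity, so the first equality in the lemma is vacuous; all the content lies in the sign, on the relevant range of $y$, of the polynomial $H_A(y)-H_B(y)$, whose coefficients lie in $\{-1,0,1\}$.

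Next I would write
\[
H_A(y)-H_B(y)=\sum_{\ell\ge 1}\varepsilon_\ell\,y^\ell,\qquad
\varepsilon_\ell:=\mathbf{1}_{\{\ell\in\mathcal{B}(T_A)\}}-\mathbf{1}_{\{\ell\in\mathcal{B}(T_B)\}}\in\{-1,0,1\},
\]
padding the shorter border set with zeros so that the ``across lengths'' case makes sense. By definition $\ell^\star$ is the largest $\ell$ with $\varepsilon_\ell\ne 0$, so $\varepsilon_{\ell^\star}\in\{-1,+1\}$ and
\[
H_A(y)-H_B(y)=\varepsilon_{\ell^\star}\,y^{\ell^\star}+\sum_{1\le \ell<\ell^\star}\varepsilon_\ell\,y^\ell .
\]
The key estimate is that the leading term dominates once $y\ge 2$:
\[
\Bigl|\sum_{1\le \ell<\ell^\star}\varepsilon_\ell\,y^\ell\Bigr|\le\sum_{\ell=1}^{\ell^\star-1}y^\ell=\frac{y^{\ell^\star}-y}{y-1}\le y^{\ell^\star}-y< y^{\ell^\star},
\]
using $y-1\ge 1$ for the middle inequality (the sum is empty, and the bound trivial, when $\ell^\star=1$). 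By the triangle inequality $H_A(y)-H_B(y)$ then has the same sign as $\varepsilon_{\ell^\star}y^{\ell^\star}$, and since $y^{\ell^\star}>0$ this equals $\operatorname{sign}(\varepsilon_{\ell^\star})=\operatorname{sign}(\mathbf{1}_{\{\ell^\star\in\mathcal{B}(T_A)\}}-\mathbf{1}_{\{\ell^\star\in\mathcal{B}(T_B)\}})$, which does not depend on $y$. This gives the second equality and the constancy of the sign; for $x\in(0,1)$ we have $y=s/x>s\ge 2$, so the hypothesis $y\ge 2$ holds throughout, and the same inequality applies at $x=1$ (i.e.\ $y=s$), where $H_T(s)=\sum_{\ell\in\mathcal{B}(T)}s^\ell=\mathbb{E}[\tau_T]$ by Corollary~\ref{col01}; hence the mean comparison carries the same sign.

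The one point requiring care is the range of $y$. The ``leading term dominates'' bound genuinely fails for $y$ near $1$ --- for instance $H_A(y)-H_B(y)=y^3-y^2-y$ is negative on $(1,\varphi)$ and positive for $y>\varphi$ with $\varphi=\tfrac{1+\sqrt5}{2}$ --- so one cannot get away with only $y>1$; the argument must invoke $y=s/x\ge s\ge 2$ on $x\in(0,1]$. I would also state explicitly that $\varepsilon_{\ell^\star}\ne 0$ by maximality of $\ell^\star$, since the whole computation rests on there being a genuine nonzero leading coefficient. Everything else --- the geometric-sum bound and the identification $H_T(s)=\mathbb{E}[\tau_T]$ --- is routine.
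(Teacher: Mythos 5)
Your proof is correct, and since the paper states Lemma~\ref{lem:lex-sign-compact} without supplying a proof before invoking it in the proof of Theorem~\ref{thm:fair-total-order}, your argument is presumably exactly the one the authors intended: rewrite $h_T(x)=H_T(s/x)$ so the first equality is an identity; observe that the coefficients of $H_A-H_B$ are $\varepsilon_\ell\in\{-1,0,1\}$; and use the geometric-sum bound to show the top nonzero coefficient $\varepsilon_{\ell^\star}$ dominates once $y\ge 2$. Your flagging of the range of $y$ is a genuine and useful clarification: the ``$y>1$'' noted in the lemma's statement is not what makes the argument work (your example $y^3-y^2-y$, realised e.g.\ by $T_A=\mathrm{HHT}$, $T_B=\mathrm{TT}$, changes sign on $(1,2)$), and the real hypothesis being used is $y=s/x>s\ge 2$ for $x\in(0,1]$, which holds automatically in the fair-die setting. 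You might also state, for completeness, that $\ell^\star$ exists iff the border-indicator vectors of $T_A,T_B$ differ; when they coincide, $H_A\equiv H_B$ and the strings are statistically equivalent, the degenerate case handled separately in Theorem~\ref{thm:fair-total-order} and Corollary~\ref{cor:order-by-expectation-compact}.
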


\begin{proof}[Proof of Theorem~\ref{thm:fair-total-order}]
By \eqref{eq:fundamental-factor-compact} and $c_A,c_B>0$ on $(0,1)$, the sign of $F_B-F_A$ on $(0,1)$ equals the constant sign from Lemma~\ref{lem:lex-sign-compact}, so all (nonzero) coefficients of $F_B-F_A$ share that sign. This is precisely the definition of dominance/equivalence.
\end{proof}

As an immediate consequence of Theorem~\ref{thm:fair-total-order} we obtain an explicit description of the
dominance order in the fair case.

\begin{corollary}[Ordering by expectation]
\label{cor:order-by-expectation-compact}
Under fairness,
\(
T_A \text{ dominates } T_B \iff \mathbb{E}[\tau_{T_A}]\ge \mathbb{E}[\tau_{T_B}],
\)
with equality iff the strings are statistically equivalent.
\end{corollary}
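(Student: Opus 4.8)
The plan is to obtain the corollary by composing two facts already in hand: Theorem~\ref{thm:fair-total-order}, which under fairness makes stochastic dominance a total pre-order whose comparisons track $\mathbb{E}[\tau_T]$, and Theorem~\ref{thm:dom-implies-mean}, which for an arbitrary die guarantees that \emph{strict} dominance forces a strictly larger mean. I will also use the mean--waiting--time identity $\mathbb{E}[\tau_T]=h_T(1)$ together with Corollary~\ref{col01}: in the fair case $\mathbb{E}[\tau_T]=H_T(s)=\sum_{\ell\in\mathcal{B}(T)}s^\ell$, so ``ordering by expectation'' is literally ordering by this integer.

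For the forward direction, suppose $T_A$ dominates $T_B$, read in the reflexive sense: either $\tau_{T_A}\sdom\tau_{T_B}$ or the two waiting times are statistically equivalent. In the equivalent case the distributions coincide and the means agree; in the strict case Theorem~\ref{thm:dom-implies-mean} gives $\mathbb{E}[\tau_{T_A}]>\mathbb{E}[\tau_{T_B}]$. Hence $\mathbb{E}[\tau_{T_A}]\ge\mathbb{E}[\tau_{T_B}]$, with equality only in the statistically equivalent case. For the reverse direction, suppose $\mathbb{E}[\tau_{T_A}]\ge\mathbb{E}[\tau_{T_B}]$. By Theorem~\ref{thm:fair-total-order} exactly one of $\tau_{T_A}\sdom\tau_{T_B}$, $\tau_{T_B}\sdom\tau_{T_A}$, or statistical equivalence holds. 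The middle alternative is impossible, since by Theorem~\ref{thm:dom-implies-mean} it would yield $\mathbb{E}[\tau_{T_B}]>\mathbb{E}[\tau_{T_A}]$, contradicting the hypothesis; so $T_A$ dominates $T_B$. If moreover $\mathbb{E}[\tau_{T_A}]=\mathbb{E}[\tau_{T_B}]$, then $\tau_{T_A}\sdom\tau_{T_B}$ is likewise ruled out by Theorem~\ref{thm:dom-implies-mean}, leaving statistical equivalence; this is the equality clause.

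A fully self-contained alternative is to compose Lemma~\ref{lem:lex-sign-compact} with the factorisation~\eqref{eq:fundamental-factor-compact}: the common sign of the coefficients of $F_B-F_A$ equals $\operatorname{sign}\!\big(h_A(x)-h_B(x)\big)=\operatorname{sign}\!\big(H_A(s)-H_B(s)\big)=\operatorname{sign}\!\big(\mathbb{E}[\tau_{T_A}]-\mathbb{E}[\tau_{T_B}]\big)$, whereupon the sign of $\Pr(\tau_{T_A}\le n)-\Pr(\tau_{T_B}\le n)$ — and thus the dominance/equivalence verdict — is read off directly from the comparison of means. I do not expect a real obstacle here: the only delicate point is that the three-way alternative of Theorem~\ref{thm:fair-total-order} must be genuinely exclusive, so that a pair with equal means but distinct distributions cannot masquerade as ``mutually non-strictly dominant'' — and this exclusivity is exactly what the constancy of sign in Lemma~\ref{lem:lex-sign-compact} supplies.
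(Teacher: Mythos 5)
Your proof is correct and matches the paper's intent: the corollary is announced as an ``immediate consequence'' of Theorem~\ref{thm:fair-total-order}, and you unwind it exactly as one should, using Theorem~\ref{thm:dom-implies-mean} for the forward direction and the totality of the fair-die pre-order for the reverse, with your alternative route via Lemma~\ref{lem:lex-sign-compact} and \eqref{eq:fundamental-factor-compact} reproducing the sign computation inside the paper's own proof of Theorem~\ref{thm:fair-total-order}. One small misattribution in your closing remark: the mutual exclusivity of the three alternatives (strict dominance one way, strict dominance the other way, statistical equivalence) is automatic for any source from the definitions of $\sdom$ and equivalence, so it does not need Lemma~\ref{lem:lex-sign-compact}; what that lemma and the fair-die hypothesis actually supply is the \emph{totality} of the relation together with the fact that its orientation is read off from $\mathbb{E}[\tau_T]$.
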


\medskip

\subsection{Coins: total comparability forces fairness}
The fair-die theorem above raises a natural converse question in the simplest nontrivial alphabet:
\emph{for binary sources, does total comparability under stochastic dominance hold only in the fair case?}
The next theorem answers this in the affirmative.

\begin{theorem}[Total stochastic dominance characterises the fair coin]
\label{thm:coin-totality-iff-fair}
Let $p\in(0,1)$ and consider an i.i.d.\ coin with $\Pr(\mathrm{H})=p$ and $\Pr(\mathrm{T})=q:=1-p$.
For each finite binary string $T\in\{\mathrm{H},\mathrm{T}\}^\ast$, let $\tau_T$ be its first-hit time and
write $\prec_{\mathrm{st}}$ for the strict stochastic-dominance order on strings:
\[
T_1\prec_{\mathrm{st}}T_2
\quad\Longleftrightarrow\quad
\tau_{T_2}\sdom \tau_{T_1}.
\]
Then $\prec_{\mathrm{st}}$ is a strict weak order on the set of \emph{all} binary strings
(i.e.\ every pair is comparable up to statistical equivalence) if and only if $p=\tfrac12$.
\end{theorem}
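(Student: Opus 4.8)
The plan is to prove both directions. The forward direction (fairness $\Rightarrow$ total weak order) is immediate: for a fair coin, $\Pr(\mathrm{H})=\Pr(\mathrm{T})=1/2$ is a special case of a fair $s$-sided die with $s=2$, so Theorem~\ref{thm:fair-total-order} (together with Corollary~\ref{cor:order-by-expectation-compact}) applies and gives a total pre-order on all binary strings, ordered by $\mathbb{E}[\tau_T]$.

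The substantive direction is the contrapositive: if $p \neq 1/2$, exhibit two binary strings $T_1, T_2$ whose waiting times are stochastically incomparable (neither dominates the other, and they are not statistically equivalent). By Lemma~\ref{lem:factor} (equivalently \eqref{eq:tails-factor}), the tail difference $T_{T_2}(x) - T_{T_1}(x)$ factors as $\bigl(h_2(x)-h_1(x)\bigr)c_1(x)c_2(x)$; since $c_1, c_2$ are generating functions with positive coefficients (and strictly positive on $(0,1)$), the sign pattern of the coefficients of $T_{T_2}-T_{T_1}$ is governed by the coefficient sign pattern of $h_2(x)-h_1(x)$. The key point is that $h_T(x) = \sum_{\ell\in\mathcal{B}(T)} \Pr(T[1..\ell])^{-1} x^{-\ell}$, so unlike the fair case — where each border of length $\ell$ contributes exactly $s^\ell$ and these powers are forced to be distinct and comparable — the coefficient attached to border length $\ell$ is now $\Pr(T[1..\ell])^{-1} = p^{-a}q^{-b}$ where $(a,b)$ counts heads and tails in the prefix. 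When $p\neq q$, I can choose $T_1,T_2$ so that $h_2 - h_1$ has coefficients of \emph{both} signs: e.g.\ pick the two strings so their border sets differ at two lengths $\ell < \ell'$ in opposite directions, and arrange the letter-compositions of those prefixes so that at the shorter length the $h_1$-term dominates while at the longer length the $h_2$-term dominates (this is where $p \neq q$ is essential — it decouples "longer border" from "larger $h$-coefficient"). Then $h_2 - h_1$ changes sign as a Laurent polynomial in $x^{-1}$ on $(0,1)$, hence $T_{T_2} - T_{T_1}$ has both positive and negative coefficients, which is exactly incomparability.

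Concretely, I would look for the smallest such witness by a short search over binary strings of bounded length and present one explicit pair $T_1,T_2$, verifying by direct computation that $h_{T_2}(x)-h_{T_1}(x)$ takes both signs on $(0,1)$ for every $p\neq 1/2$ (this amounts to checking that two monomials $p^{-a}q^{-b}x^{-\ell}$ with opposite effective signs cannot cancel across the whole interval — a statement about the single algebraic function $p \mapsto p^{-a}(1-p)^{-b}$ being strictly monotone away from its unique critical point at a fair-type ratio). One must also ensure the witness strings are not statistically equivalent (Proposition~\ref{prop:border-equivalence}): this is automatic once $h_{T_1}\neq h_{T_2}$. The main obstacle I anticipate is \emph{uniformity in $p$}: a fixed pair $(T_1,T_2)$ that witnesses incomparability for one value of $p\neq 1/2$ might, a priori, fail for another; so either the witness must be chosen to work for all $p\in(0,1)\setminus\{1/2\}$ simultaneously, or the argument must produce, for each $p$, a pair depending on $p$ — I would aim for the former by selecting border-length pairs $(\ell,\ell')$ far enough apart that the monotonicity of $p^{-a}(1-p)^{-b}$ forces a sign change regardless of which side of $1/2$ the bias lies, handling the two cases $p<1/2$ and $p>1/2$ symmetrically via the $\mathrm{H}\leftrightarrow\mathrm{T}$ involution.
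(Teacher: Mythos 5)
Your forward direction (invoke Theorem~\ref{thm:fair-total-order} for $s=2$) is exactly the paper's. For the converse, your high-level route — use the factorisation $T_{T_2}-T_{T_1}=(h_2-h_1)c_1c_2$ and find a pair for which $h_2-h_1$ changes sign on $(0,1)$, hence the tail difference does too, hence the coefficient sequence cannot be single-signed — is sound in principle, and your intuition that bias ``decouples longer border from larger $h$-coefficient'' is the right conceptual picture. But there is a genuine gap in the middle: you argue from ``the Laurent polynomial $h_2-h_1$ has coefficients of both signs'' to ``$h_2-h_1$ changes sign as a function on $(0,1)$.'' That step is false in general (e.g.\ $1-x+x^2$ has sign-alternating coefficients but is positive on $(0,1)$), and it is also not true that ``the sign pattern of the coefficients of $T_{T_2}-T_{T_1}$ is governed by the coefficient sign pattern of $h_2-h_1$'': multiplying a sign-changing Laurent polynomial by the positive series $c_1c_2$ can destroy any sign information at the coefficient level. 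What does work is the \emph{functional} sign change, but you would have to establish that directly (e.g.\ by evaluating $h_2-h_1$ at $x\to 0^+$ and at $x=1$, which is precisely what the paper's computation amounts to). Your proposal also stops short of a concrete witness, deferring to ``a short search,'' so it is a strategy rather than a proof.

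Secondly, your stated main obstacle — uniformity in $p$, i.e.\ wanting a single pair that is incomparable for \emph{every} $p\neq\tfrac12$ — is a misreading of what is required and is in fact unachievable: the theorem only asserts, for each fixed biased $p$, the existence of \emph{some} incomparable pair, and any such pair must depend on $p$ (the witness length must grow as $p\to\tfrac12$). The paper's argument is both more elementary and sidesteps the gap above. With WLOG $p>q$, choose $n$ so that $p^n<q$ and set $A=\mathrm{H}^n$, $B=\mathrm{H}^{n-1}\mathrm{T}$. Then $\Pr(\tau_A>n)=1-p^n<1-p^{n-1}q=\Pr(\tau_B>n)$, killing dominance of $\tau_A$ over $\tau_B$ at the single earliest checkpoint $n$; and $\mathbb{E}[\tau_A]>\mathbb{E}[\tau_B]$ (equivalent, after simplification, to exactly the inequality $p^n<q$), so Theorem~\ref{thm:dom-implies-mean} rules out dominance the other way. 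This two-pronged comparison — one survival-function evaluation plus one expectation comparison — is shorter, avoids analysing the sign of $h_2-h_1$ across all of $(0,1)$, and gives an explicit witness. You should adopt this concrete construction, or supply a rigorous version of the $x\to 0^+$ vs.\ $x=1$ sign argument, to close the gap.
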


\begin{proof}
If \(p=\tfrac12\) then the source is a fair \(2\)-die, so total comparability follows from Theorem~\ref{thm:fair-total-order}.

Conversely assume \(p\neq\tfrac12\).
By symmetry (swap \(\mathrm{H}\) and \(\mathrm{T}\)) we may assume \(p>q\).
Choose \(n\ge2\) such that \(p^n<q\); such an \(n\) exists since \(p^n\to0\) as \(n\to\infty\).
Let \(A=\mathrm{H}^n\) and \(B=\mathrm{H}^{n-1}\mathrm{T}\).

Both patterns have length \(n\), hence their first-hit times satisfy \(\tau_A,\tau_B\ge n\).
At the first possible ``checkpoint'' \(m=n\) we have
\[
\Pr(\tau_A\le n)=\Pr(\tau_A=n)=p^n
\qquad\text{and}\qquad
\Pr(\tau_B\le n)=\Pr(\tau_B=n)=p^{n-1}q.
\]
Since \(p>q\) we get \(p^n>p^{n-1}q\), and therefore
\[
\Pr(\tau_A>n)=1-p^n \;<\;1-p^{n-1}q=\Pr(\tau_B>n).
\]
Thus \(\tau_A\not\sdom \tau_B\), so \(A\not\prec_{\mathrm{st}} B\).

On the other hand, the expected waiting times are
\[
\E[\tau_A]=\sum_{j=1}^n p^{-j}=\frac{p^{-n}-1}{q},
\qquad
\E[\tau_B]=\frac{1}{p^{n-1}q},
\]
(using Theorem~\ref{th1}; here \(B\) has no proper borders, so \(\mathcal{B}(B)=\{n\}\)).
Rewriting, \(\E[\tau_A]=(p^{-n}-1)/q=(1-p^n)/(q p^n)\) and \(\E[\tau_B]=1/(p^{n-1}q)=p/(q p^n)\), so \(\E[\tau_A]>\E[\tau_B]\) is exactly the inequality \(1-p^n>p\), i.e.\ \(p^n<q\).
If \(\tau_B\sdom \tau_A\) then Theorem~\ref{thm:dom-implies-mean} would imply \(\E[\tau_B]>\E[\tau_A]\), a contradiction.
Hence \(\tau_B\not\sdom \tau_A\) as well.

Therefore \(A\) and \(B\) are \emph{incomparable} under stochastic dominance (and in particular not statistically equivalent), so \(\prec_{\mathrm{st}}\) cannot be a strict weak order when \(p\neq\tfrac12\).
\end{proof}

\section{Hadamard products and winning odds in string racing games}
\label{sec:hadamard}

We now derive head-to-head winning probabilities for \emph{independent} players using generating functions. 
For a target string $T$, let $A_T(x)=c_T(x)=\sum_{n\ge 1} a_{T,n}x^n$ be the pgf of the first occurrence time $\tau_T$ (Theorem~\ref{th1}).
The central operation will be the \emph{Hadamard product} of two series,
\[
(F\odot G)(x)\;:=\;\sum_{n\ge 0} f_n g_n\,x^n\qquad\text{for }F(x)=\sum f_n x^n,\;G(x)=\sum g_n x^n,
\]
which multiplies coefficients \emph{term-by-term}. In our setting this lets us pair the first-hit probability of one player at time $n$
with the other player's survival probability at time $n$. Background on Hadamard products of rational generating functions and analytic‑combinatorics tools can be found in \cite{Bostan2006,Flajolet2009}.

\paragraph*{Setup and tie conventions}
Given two independent players with targets $T_1,T_2$, write
\[
S_T(n)\;:=\;\Pr(\tau_T>n)\;=\;1-\sum_{t=1}^{n} a_{T,t}
\]
for the tail (survival) probabilities. We will use three standard tie conventions:
\[
\begin{aligned}
\text{\emph{strict:}}&\quad \Pr(T_1<T_2)=\sum_{n\ge 1} a_{T_1,n}\,S_{T_2}(n),\\
\text{\emph{tie-favoured for $T_1$:}}&\quad \Pr(T_1\le T_2)=\sum_{n\ge 1} a_{T_1,n}\,S_{T_2}(n-1),\\
\text{\emph{random tie-break:}}&\quad \Pr_{\mathrm{rtb}}(T_1\text{ beats }T_2)=\Pr(T_1<T_2)+\tfrac12\Pr(T_1=T_2),
\end{aligned}
\]
with $\Pr(T_1=T_2)=\sum_{n\ge 1} a_{T_1,n}a_{T_2,n}$.

\paragraph*{Hadamard formulation}
Define the auxiliary series
\[
B_T(x)\;:=\;\sum_{n\ge 1} S_T(n-1)\,x^n=\frac{x}{1-x}\bigl(1-A_T(x)\bigr), 
\qquad
\tilde B_T(x)\;:=\;\sum_{n\ge 1} S_T(n)\,x^n=\frac{x-A_T(x)}{1-x}.
\]

Then the series whose $n$-th coefficient is the probability that $T_1$ wins \emph{at} time $n$ (under the tie-favoured convention) is
\[
W_{T_1|T_2}(x)\;=\;A_{T_1}(x)\ \odot\ B_{T_2}(x).
\]
The total win probabilities are obtained as \emph{Abelian limits}:
\[
\Pr(T_1\le T_2)\;=\;\lim_{x\uparrow 1} W_{T_1|T_2}(x),\qquad
\Pr(T_1<T_2)\;=\;\lim_{x\uparrow 1}\bigl(A_{T_1}\odot \tilde B_{T_2}\bigr)(x),
\]
and, with random tie-break,
\[
\Pr_{\mathrm{rtb}}(T_1\text{ beats }T_2)
=\lim_{x\uparrow 1}\Bigl[\bigl(A_{T_1}\odot \tilde B_{T_2}\bigr)(x)
+\tfrac12\,\bigl(A_{T_1}\odot A_{T_2}\bigr)(x)\Bigr].
\]
This evaluation uses only monotone convergence of nonnegative series, as formalised next.

\begin{lemma}[Abelian evaluation of Hadamard win series]\label{lem:abel-hadamard}
Let $A(x)=\sum_{n\ge 1} a_n x^n$ and $B(x)=\sum_{n\ge 1} b_n x^n$ have nonnegative coefficients and radius of convergence at least $1$,
with $\sum_{n\ge 1} a_n=1$ and either $0\le b_n\le 1$ for all $n$ or, more generally, $\sum_{n\ge 1} a_n b_n<\infty$.
For $H(x):=(A\odot B)(x)=\sum_{n\ge 1} a_n b_n x^n$ we have
\[
\sum_{n\ge 1} a_n b_n<\infty
\qquad\text{and}\qquad
\lim_{x\uparrow 1} H(x)=\sum_{n\ge 1} a_n b_n .
\]
\end{lemma}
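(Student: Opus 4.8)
The plan is to reduce the statement to two standard facts: (i) a monotone-convergence / Tonelli argument showing that the nonnegative double series $\sum_n a_n b_n x^n$ is finite at $x=1$, and (ii) Abel's theorem for power series with nonnegative coefficients, which says that the value at $x=1^-$ of such a series equals the (possibly infinite, here finite) sum of its coefficients. So there is very little to do beyond bookkeeping.

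First I would dispose of the finiteness claim $\sum_{n\ge1} a_n b_n<\infty$. Under the hypothesis $0\le b_n\le 1$ this is immediate, since $0\le a_n b_n\le a_n$ and $\sum_n a_n=1$, so the series is bounded by $1$. Under the more general hypothesis $\sum_n a_n b_n<\infty$ there is nothing to prove. In either case set $L:=\sum_{n\ge1} a_n b_n\in[0,\infty)$; note also that $H(x)=\sum_{n\ge1} a_n b_n x^n$ then converges absolutely for every $x\in[0,1]$, since $|a_n b_n x^n|\le a_n b_n$, so $H$ is well defined on all of $[0,1]$ and in particular the notation $\lim_{x\uparrow1}H(x)$ makes sense.

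Next I would prove $\lim_{x\uparrow1}H(x)=L$. Since all terms $a_n b_n\ge 0$, for each fixed $x\in[0,1)$ we have $H(x)=\sum_n a_n b_n x^n\le \sum_n a_n b_n=L$, and $x\mapsto H(x)$ is nondecreasing on $[0,1)$ (termwise, $x^n$ is nondecreasing in $x$ and the coefficients are nonnegative); hence $\lim_{x\uparrow1}H(x)$ exists and is $\le L$. For the reverse inequality, fix $N$ and any $\varepsilon>0$; then for $x<1$,
\[
H(x)\;\ge\;\sum_{n=1}^{N} a_n b_n x^n\;\xrightarrow[x\uparrow1]{}\;\sum_{n=1}^{N} a_n b_n,
\]
so $\liminf_{x\uparrow1} H(x)\ge \sum_{n=1}^N a_n b_n$; letting $N\to\infty$ gives $\liminf_{x\uparrow1}H(x)\ge L$. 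Combining the two inequalities yields $\lim_{x\uparrow1}H(x)=L=\sum_{n\ge1} a_n b_n$, as claimed. (Equivalently one can invoke the monotone convergence theorem on the measure space $\mathbb{N}$ with weights $a_n b_n$, letting the integrand $x^n\uparrow 1$; I would present whichever phrasing is shortest in context.)

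There is essentially no hard step here: the only point that needs a moment's care is making sure that the radius-of-convergence hypotheses on $A$ and $B$ are not actually needed for the Hadamard series' finiteness — finiteness of $\sum a_n b_n$ comes purely from $0\le b_n\le 1$ and $\sum a_n=1$ (or is assumed outright) — and that the convergence $\lim_{x\uparrow1}H(x)=L$ holds even when $L$ is reached only in the limit, which is exactly what the nonnegativity of the coefficients buys us via Abel's theorem. I would state explicitly that the hypotheses on $A,B$ guarantee that the three win-series in the surrounding text ($A_{T_1}\odot B_{T_2}$, $A_{T_1}\odot\tilde B_{T_2}$, $A_{T_1}\odot A_{T_2}$) all satisfy the lemma's assumptions — in each case the first factor has coefficients summing to $1$ and the second has coefficients in $[0,1]$ — so the Abelian limits used to define $\Pr(T_1\le T_2)$, $\Pr(T_1<T_2)$ and $\Pr_{\mathrm{rtb}}$ are legitimate.
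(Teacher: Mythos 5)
Your proof is correct and follows essentially the same route as the paper's: nonnegativity of $a_nb_n$, monotonicity of $H$ on $[0,1)$, boundedness by $\sum_n a_nb_n$, and a truncation argument to pass to the limit (the paper bounds the tail above by $\varepsilon$, you bound $H(x)$ below by a partial sum and take $\liminf$ — the same idea). You also spell out the trivial finiteness step under the hypothesis $0\le b_n\le 1$, which the paper compresses into ``by hypothesis''; that is a small but worthwhile improvement in explicitness.
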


\begin{proof}
Set $c_n:=a_n b_n\ge 0$. By hypothesis $\sum_n c_n<\infty$. For $x\in[0,1)$,
$H(x)=\sum_{n\ge 1} c_n x^n$ is nondecreasing in $x$ and bounded by $\sum_n c_n$.
Given $\varepsilon>0$, choose $N$ with $\sum_{n>N} c_n<\varepsilon$ and split
$H(x)=\sum_{n=1}^N c_n x^n+\sum_{n>N} c_n x^n$; the tail is $<\varepsilon$ for all $x<1$,
and the finite sum tends to $\sum_{n=1}^N c_n$ as $x\uparrow 1$. Let $x\uparrow 1$ then $N\to\infty$.
\end{proof}

\begin{remark}[Application and convention]\label{rem:abel-notation}
In our application $A(x)=A_{T_1}(x)$ with $\sum a_{1,n}=1$, and the coefficients of $B_T$ and $\tilde B_T$
are $S_T(n-1)$ and $S_T(n)$, which lie in $[0,1]$. Lemma~\ref{lem:abel-hadamard} therefore yields the identities above.
Henceforth, when we write \(F(1)\) for a Hadamard series, we mean its Abelian value \(\lim_{x\uparrow 1} F(x)\)
(i.e.\ we evaluate the Hadamard series at \(x=1\) directly, with no additional \(\frac{1}{1-x}\) factor outside the Hadamard product).
\end{remark}
\subsection{Paradoxical string racing games}
We consider the \emph{parallel racing game} between two strings \(A,B\): two independent
copies of the i.i.d.\ source are generated in parallel, and \(A\) (resp.\ \(B\)) \emph{finishes}
when it first appears on its stream; ties (simultaneous finishes) are broken uniformly at random.
Write \(\mathsf{Win}(A\!>\!B)\) for the probability that \(A\) defeats \(B\) under this rule.

Equivalently, orient the complete directed graph (tournament) on a family of targets by
\(A\to B\) iff \(\mathsf{Win}(A\!>\!B)>\tfrac12\). A \emph{paradoxical triple} is then a directed \(3\)-cycle
\(A\to B\to C\to A\). (This ``win-cycle'' notion is distinct from cycles in the stochastic-dominance order \(\prec_{\mathrm{st}}\) from Section~\ref{sec:stoch-dom}.)

\begin{theorem}[Paradox under bias: existence of examples]\label{thm:biased-to-paradox}
For the binary alphabet, there exist bias parameters \(p\in(0,1)\setminus\{\tfrac12\}\) and strings
\(A,B,C\) such that \((A,B,C)\) is a paradoxical triple for the parallel racing game.
In particular, we have explicit examples with a \emph{common} bias \(p\) on all three streams.
\end{theorem}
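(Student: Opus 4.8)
The plan is to exhibit an explicit triple $(A,B,C)$ of binary strings together with a single bias $p$ and verify directly that $\mathsf{Win}(A\!>\!B)>\tfrac12$, $\mathsf{Win}(B\!>\!C)>\tfrac12$, and $\mathsf{Win}(C\!>\!A)>\tfrac12$. The machinery for this is already in place: by Theorem~\ref{th1} each $c_T(x)$ is an explicit rational function of $x$ (with the border set $\mathcal{B}(T)$ read off from $T$), and by the Hadamard formulation together with Lemma~\ref{lem:abel-hadamard} the win probability under the random-tie-break rule is
\[
\mathsf{Win}(A\!>\!B)=\lim_{x\uparrow 1}\Bigl[(A_A\odot\tilde B_B)(x)+\tfrac12\,(A_A\odot A_B)(x)\Bigr]
=\sum_{n\ge 1} a_{A,n}\,S_B(n)+\tfrac12\sum_{n\ge 1} a_{A,n}\,a_{B,n}.
\]
So the entire statement reduces to evaluating three such absolutely convergent sums and checking three inequalities.

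To make the evaluation finite rather than an infinite sum, I would use the rational-function representation directly. Since $A_T,\tilde B_T$ are rational in $x$ with poles off the closed unit disc (the denominator $1+(1-x)h_T(x)$, cleared of the Laurent part, has all roots of modulus $>1$ because the pgf is a genuine probability distribution), their Hadamard product $A_A\odot\tilde B_B$ is again rational with the same property, and its value at $x=1$ can be computed in closed form — e.g. via the contour/residue formula $(F\odot G)(1)=\frac{1}{2\pi i}\oint F(z)\,G(1/z)\,\frac{dz}{z}$, or, more practically, by solving the small linear system coming from the two pattern automata (the joint chain on pairs of automaton states). Concretely I would pick short patterns so that the joint automaton has only a handful of states; for instance a Penney-flavoured choice such as $A=\mathrm{HH}$, $B=\mathrm{HT}$, $C=\mathrm{TH}$ (or a length-3 analogue if length~2 turns out to be always acyclic for independent streams), set up the $2$-or-$3$-state-per-side Markov chains, and solve for the absorption probabilities as rational functions of $p$ and $q=1-p$.

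Having the three win probabilities as explicit rational functions $R_{AB}(p), R_{BC}(p), R_{CA}(p)$, the final step is to produce a single $p\in(0,1)\setminus\{\tfrac12\}$ at which all three exceed $\tfrac12$. I would first check the fair point $p=\tfrac12$: by the symmetry of the construction one expects each $R$ to equal $\tfrac12$ there (consistent with Corollary~\ref{cor:order-by-expectation-compact}, under which no win-cycle can be "locked in" by expectations — though note win-cycles and $\prec_{\mathrm{st}}$-cycles are different, so a little care is needed). Then I would compute the derivatives $R'_{AB}(\tfrac12),R'_{BC}(\tfrac12),R'_{CA}(\tfrac12)$; if they are all of the same sign, then for $p$ slightly on the appropriate side of $\tfrac12$ all three inequalities hold simultaneously by continuity, giving the desired $p$ with a common bias, and one can then pin down an explicit rational value of $p$ (e.g. $p=0.45$ or $p=0.55$) and certify the three strict inequalities by exact arithmetic on the rational functions.

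The main obstacle I anticipate is the search/guessing step: finding a triple $(A,B,C)$ for which the three win-probability curves genuinely cycle at a common bias, rather than merely at three different biases. The asymmetry introduced by $p\neq\tfrac12$ helps (it is what breaks the fair-coin total preorder, per Theorem~\ref{thm:coin-totality-iff-fair}), but one must ensure the three pairwise advantages rotate consistently; this is where I would lean on the computational classification of short patterns mentioned in the abstract to locate a working triple, and then present it with a fully worked rational-function certificate. A secondary, purely bookkeeping nuisance is handling the tie terms $\tfrac12\sum a_{A,n}a_{B,n}$ correctly in the Hadamard/residue computation, but Lemma~\ref{lem:abel-hadamard} already licenses evaluating every piece as a plain Abelian limit at $x=1$, so no further analytic justification is needed.
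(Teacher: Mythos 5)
Your overall framework is exactly the paper's: compute the three pairwise random-tie-break win probabilities as rational functions of $p$ via the Hadamard/Abelian machinery (or, equivalently, the joint prefix automaton), then produce an explicit rational $p$ and certify three strict polynomial inequalities. The paper supplies such witnesses from a computer-aided census, e.g.\ the common-bias triple $(A,B,C)=(\mathrm{TT},\mathrm{HHHHH},\mathrm{HHHHT})$ (in the paper's $0/1$ notation $(11,00000,00001)$) with $p\in(0.72097\ldots,0.72283\ldots)$, and the unequal-bias triple $T_A=\mathrm{HHH},p_A=0.61;\;T_B=\mathrm{HTH},p_B=0.71;\;T_C=\mathrm{HHT},p_C=0.51$.

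Two concrete steps of your plan, however, would fail and need to be replaced. First, your candidate triples are all of equal length $2$ or $3$, but the paper's certified Theorems~\ref{fact:L2-no-cycles} and~\ref{fact:upto4-none} show there are \emph{no} directed $3$-cycles with all lengths $\le 4$ under a common bias, and moreover Theorem~\ref{fact:upto8-counts} reports that \emph{none} of the certified cycles up to length $8$ uses equal-length triples; the smallest common-bias cycle has length profile $(2,5,5)$. So a search restricted to equal-length, short patterns never terminates with a witness. Second, the perturbation-around-$p=\tfrac12$ strategy does not work: it is not true that each $R_{XY}(\tfrac12)=\tfrac12$ (e.g.\ $\Pr_{\mathrm{rtb}}(\mathrm{HH}\ \text{beats}\ \mathrm{HT})=95/242$ at $p=\tfrac12$), and for the triples where the three values \emph{are} $\tfrac12$ the fair case gives statistical equivalence, not a cycle. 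More fundamentally, the head-to-head tournament is transitive at $p=\tfrac12$ and, since each advantage function $g_{A,B}$ is a rational function with finitely many zeros (Theorem~\ref{thm:rational-finite-zeros}), the orientation of any fixed finite family of pairs is constant on an open interval around $\tfrac12$ except at its crossover points, so a cycle cannot be created by an infinitesimal push; one must move to a $p$ bounded away from $\tfrac12$ (the paper's certified cycle windows for $L\le8$ all lie outside roughly $|p-\tfrac12|\lesssim 0.007$). Replacing your candidate list with the paper's $(2,5,5)$ witness and dropping the linearization step would make the proposal match the paper's argument.
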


It remains open whether \emph{every} non-uniform bias necessarily yields a paradoxical triple; see Open Question~\ref{oq:paradox-iff-biased} in Section~\ref{sec:open-problems}.

\begin{remark}[Context and evidence]
Under a fair die, the stochastic-dominance comparison yields a transitive ordering across patterns
(in particular, no cycles occur), so fairness implies the absence of paradoxes for the parallel racing
game. For biased coins, many concrete paradoxical triples are observed (including some with a common
bias \(p\)).

\end{remark}

\subsection{Algebraic structure and endpoint asymptotics of \texorpdfstring{$g_{A,B}(p)$}{gAB(p)}}
\label{subsec:structure-g}

Fix two words $A,B$ of lengths $L_A,L_B$ over the alphabet $\{\mathrm{H,T}\}$, and write
\[
W^{\mathrm{rtb}}_{A,B}(p):=\Pr(\text{$A$ beats $B$ with random tie-break}),\qquad
g_{A,B}(p):=W^{\mathrm{rtb}}_{A,B}(p)-\tfrac12 .
\]
For a word $T$, let $h(T)$ (resp.\ $t(T)$) be the number of $\mathrm{H}$’s (resp.\ $\mathrm{T}$’s) in $T$.
Write $\mathcal{B}(T)$ for its border lengths (Definition~\ref{def:borders}) and set
\[
S_T\ :=\ \sum_{\ell\in\mathcal{B}(T)} \ell \ \ \le\ \frac{|T|(|T|+1)}{2}.
\]
Recall from Section~\ref{sec:hadamard} that, with
\[
A_T(x;p)=\frac{1}{1+(1-x)\,h_T(x;p)},\qquad
B_T(x;p)=\frac{x}{1-x}\bigl(1-A_T(x;p)\bigr),
\]
the head-to-head probabilities are obtained by Abelian evaluation at \(x=1\):
\[
\Pr(T_1\le T_2)=\bigl(A_{T_1}\odot B_{T_2}\bigr)(1),\qquad
\Pr(T_1=T_2)=\bigl(A_{T_1}\odot A_{T_2}\bigr)(1),
\]
and, with random tie-break,
\[
W^{\mathrm{rtb}}(T_1\!>\!T_2;p)
=\bigl(A_{T_1}\odot B_{T_2}\bigr)(1)
-\frac12\,\bigl(A_{T_1}\odot A_{T_2}\bigr)(1).
\]

\begin{proposition}[Structure of $g_{A,B}$ via Hadamard products and Abelian evaluation]
\label{prop:structure-g}
There exist coprime polynomials $N_{A,B},D_{A,B}\in\mathbb{Z}[p]$ such that
\[
g_{A,B}(p)=\frac{N_{A,B}(p)}{D_{A,B}(p)}\qquad\text{for }p\in(0,1),
\]
with
\[
\deg_p N_{A,B},\ \deg_p D_{A,B}\ \le\ S_A+S_B
\ \ \le\ \ \frac{L_A(L_A+1)}{2}+\frac{L_B(L_B+1)}{2}.
\]
All coefficients are integers. Moreover, there is a polynomial $P(L_A,L_B)$ such that

\[
\|N_{A,B}\|_1,\ \|D_{A,B}\|_1\ \le\ P(L_A,L_B)\cdot 2^{\,S_A+S_B}.
\]
We record these endpoint limits because they give quick sign information (and will be useful as simple ``filters'' in computations).
As $p\downarrow 0$ and $p\uparrow 1$ the endpoint limits satisfy
\[
\lim_{p\downarrow 0} W^{\mathrm{rtb}}_{A,B}(p)=
\begin{cases}
1,& h(A)<h(B),\\
0,& h(A)>h(B),\\
\tfrac12,& h(A)=h(B)>0,\\
\mathbf{1}_{\{L_A<L_B\}}+\tfrac12\,\mathbf{1}_{\{L_A=L_B\}},& h(A)=h(B)=0,
\end{cases}
\]
\[
\lim_{p\uparrow 1} W^{\mathrm{rtb}}_{A,B}(p)=
\begin{cases}
1,& t(A)<t(B),\\
0,& t(A)>t(B),\\
\tfrac12,& t(A)=t(B)>0,\\
\mathbf{1}_{\{L_A<L_B\}}+\tfrac12\,\mathbf{1}_{\{L_A=L_B\}},& t(A)=t(B)=0.
\end{cases}
\]
Let $\Delta_h=|h(A)-h(B)|$ and $\Delta_t=|t(A)-t(B)|$. Then the first nonzero terms near the endpoints have orders
\[
\bigl|W^{\mathrm{rtb}}_{A,B}(p)-\mathbf{1}_{\{h(A)<h(B)\}}\bigr|=\Theta\bigl(p^{\Delta_h}\bigr)\quad (p\downarrow 0),
\]
\[
\bigl|W^{\mathrm{rtb}}_{A,B}(p)-\mathbf{1}_{\{t(A)<t(B)\}}\bigr|=\Theta\bigl((1-p)^{\Delta_t}\bigr)\quad (p\uparrow 1),
\]
and if $h(A)=h(B)\ge 1$ (resp.\ $t(A)=t(B)\ge 1$) then
\[
g_{A,B}(p)=O(p)\quad(p\downarrow 0),\qquad
g_{A,B}(p)=O(1-p)\quad(p\uparrow 1).
\]
In particular, if $(h(A)-h(B))(t(A)-t(B))<0$ then $g_{A,B}$ has at least one zero in $(0,1)$.
\end{proposition}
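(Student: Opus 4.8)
The plan is to establish, in order: the rational structure of $g_{A,B}$ together with the degree and $\ell^1$-norm bounds; the endpoint limits of $W^{\mathrm{rtb}}_{A,B}$; the orders of vanishing at $p=0,1$; and finally the intermediate-value corollary. For the first part, start from Theorem~\ref{th1}, put $q=1-p$, and clear denominators, so that
\[
A_T(x;p)=\frac{x^{L_T}\,\pi_T(p)}{\,x^{L_T}\,\pi_T(p)+(1-x)\,\widehat h_T(x;p)\,},\qquad \pi_T(p):=\prod_{\ell\in\mathcal B(T)}\Pr\!\big(T[1..\ell]\big),
\]
where $\widehat h_T\in\mathbb Z[x,p]$ has strictly smaller $p$-degree, and $B_T,\tilde B_T$ have the same denominator as $A_T$. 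Because $\Pr(T[1..\ell])=p^{h_\ell}q^{t_\ell}$ with $h_\ell+t_\ell=\ell$, the clearing factor $\pi_T$ has $p$-degree exactly $\sum_{\ell\in\mathcal B(T)}\ell=S_T$, and $\ell^1$-norm at most $2^{S_T}$ after expanding each $(1-p)^{t_\ell}$. The head-to-head quantities are the Abelian values (Lemma~\ref{lem:abel-hadamard}) of Hadamard products of two such rational functions of $x$; evaluating a Hadamard product by residues over the finitely many poles of one factor and collapsing the resulting symmetric functions of those poles exhibits $(A_A\odot B_B)(1)$ and $(A_A\odot A_B)(1)$, hence $g_{A,B}=W^{\mathrm{rtb}}_{A,B}-\tfrac12$, as rational functions of $p$ with integer coefficients; in lowest terms these are $N_{A,B}/D_{A,B}$ with coprime $N_{A,B},D_{A,B}\in\mathbb Z[p]$. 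Carrying the two clearing factors $\pi_A,\pi_B$ through the computation — each further arithmetic step contributing at most a factor polynomial in $L_A,L_B$ — is meant to give $\deg N_{A,B},\deg D_{A,B}\le S_A+S_B$ and $\|N_{A,B}\|_1,\|D_{A,B}\|_1\le P(L_A,L_B)\,2^{S_A+S_B}$. I expect this to be the \textbf{main obstacle}: the naive residue/resultant evaluation of $(A_A\odot B_B)(1)$ inflates the $p$-degree by a factor polynomial in $L_A,L_B$, so to reach the sharp constant $S_A+S_B$ one must either exhibit a border-indexed closed form for $W^{\mathrm{rtb}}_{A,B}$ (only the self-correlations of $A$ and of $B$ can appear, since the two streams are independent), or prove that the excess degree cancels on reduction to lowest terms.

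For the endpoint limits it suffices to treat $p\downarrow0$, since relabelling $\mathrm H\leftrightarrow\mathrm T$ gives $W^{\mathrm{rtb}}_{A,B}(p)=W^{\mathrm{rtb}}_{\bar A,\bar B}(1-p)$, which swaps the roles of $h(\cdot)$ and $t(\cdot)$. The crucial combinatorial fact is: \emph{if $h(T)\ge1$ then $\ell=L_T$ is the only border of $T$ with $h_\ell=h(T)$} — for if some $\ell<L_T$ were another such border, then $T[\ell{+}1..L_T]$ is all $\mathrm T$'s, and the image of the last $\mathrm H$ of $T$ under the prefix-to-suffix identification $T[1..\ell]=T[L_T{-}\ell{+}1..L_T]$ would lie strictly to the right of the last $\mathrm H$ of $T$, a contradiction. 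Substituting $x=e^{-sp^{h(T)}}$ into $c_T(x;p)=1/(1+(1-x)h_T(x;p))$ and using this fact to isolate the dominant term of $h_T$ yields $c_T\to 1/(1+s)$, i.e.\ $p^{h(T)}\tau_T\Rightarrow\mathrm{Exp}(1)$ whenever $h(T)\ge1$. Consequently: if $h(A)<h(B)$ with $h(B)\ge1$, then $\tau_A$ is of strictly smaller order than $\tau_B$ in probability and $W^{\mathrm{rtb}}_{A,B}\to1$; if $h(A)=h(B)\ge1$, the rescaled waiting times converge to independent $\mathrm{Exp}(1)$'s, so $\Pr(\tau_A<\tau_B)\to\tfrac12$ while $\Pr(\tau_A=\tau_B)\le\min(\Pr(A),\Pr(B))\to0$ (using $\Pr(\tau_T=k)\le\Pr(T)$ for every $k$), whence $W^{\mathrm{rtb}}_{A,B}\to\tfrac12$; and $h(A)=h(B)=0$ is the pair $\mathrm T^{L_A},\mathrm T^{L_B}$, for which $\tau_A\to L_A$ and $\tau_B\to L_B$ in probability, giving $\mathbf 1_{\{L_A<L_B\}}+\tfrac12\,\mathbf 1_{\{L_A=L_B\}}$. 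These are exactly the listed limits, and the $p\uparrow1$ formulas follow by the symmetry above.

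For the orders of vanishing, note first that $g_{A,B}$ is rational with a finite limit at each endpoint, hence analytic there, so $g_{A,B}(p)=O(p)$ as $p\downarrow0$ is equivalent to $g_{A,B}(0)=0$, which (by the previous paragraph) holds precisely in the balanced case $h(A)=h(B)\ge1$. In the principal case $h(A),h(B)\ge1$ with $h(A)<h(B)$: from $\Pr(\tau_B\le m)\le m\Pr(B)$ and independence of the streams, $1-W^{\mathrm{rtb}}_{A,B}(p)\le\Pr(\tau_B\le\tau_A)=\sum_m\Pr(\tau_A=m)\Pr(\tau_B\le m)\le\Pr(B)\,\E[\tau_A]$, which is $\Theta(p^{h(B)})\cdot\Theta(p^{-h(A)})=O(p^{\Delta_h})$ since $\E[\tau_A]=\sum_{\ell\in\mathcal B(A)}\Pr(A[1..\ell])^{-1}$ is dominated by the border $\ell=L_A$ (Theorem~\ref{th1}); for the matching lower bound, fix $M$ of order $p^{-h(A)}$ so that $\Pr(\tau_A>M)$ is bounded below (valid since $p^{h(A)}\tau_A\Rightarrow\mathrm{Exp}(1)$), and a first/second-moment (Paley--Zygmund) estimate on the number of occurrences of $B$ among the first $M$ symbols of its own stream gives $\Pr(\tau_B\le M)=\Omega(M\Pr(B))=\Omega(p^{\Delta_h})$, so that $1-W^{\mathrm{rtb}}_{A,B}(p)\ge\Pr(\tau_B<\tau_A)\ge\Pr(\tau_B\le M)\,\Pr(\tau_A>M)=\Omega(p^{\Delta_h})$; the boundary cases with a letter-count equal to $0$ are obtained from elementary estimates on $\tau_{\mathrm T^{L}}$, and $p\uparrow1$ follows by symmetry.

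Finally, the corollary. If $(h(A)-h(B))(t(A)-t(B))<0$, then $h(A)\ne h(B)$ and $t(A)\ne t(B)$, so by the endpoint analysis $g_{A,B}(p)\to+\tfrac12$ at one endpoint of $(0,1)$ and $g_{A,B}(p)\to-\tfrac12$ at the other (more $\mathrm H$'s in $A$ helps $A$ as $p\downarrow0$ but hurts as $p\uparrow1$, and symmetrically for $\mathrm T$'s). Since $g_{A,B}$ is continuous on $(0,1)$ — it equals $W^{\mathrm{rtb}}_{A,B}-\tfrac12$, which takes values in $[-\tfrac12,\tfrac12]$ there and hence is a rational function without poles in $(0,1)$ — the intermediate value theorem produces a zero of $g_{A,B}$ in $(0,1)$.
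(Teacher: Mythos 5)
Your proposal splits into four parts; three are sound and in places cleaner than the paper's, but one flags (correctly) a genuine unresolved step.

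\paragraph{Rational structure and degree bounds.}
You honestly identify the degree bound $\deg_p N_{A,B},\ \deg_p D_{A,B}\le S_A+S_B$ as the main obstacle and do not close it. This is a real gap, and in fact it is also a gap in the paper's own argument: the paper asserts that the Hadamard product of two rational series with $\deg_p\le S_A$ and $\deg_p\le S_B$ again has $\deg_p\le S_A+S_B$, but the standard resultant construction of a Hadamard-product denominator produces a determinant of size $|A|+|B|+O(1)$ whose entries have $\deg_p$ up to $\max(S_A,S_B)$, which inflates the degree by a factor polynomial in $L_A,L_B$ exactly as you predict. Worse, the bound appears to fail already for $A=\mathrm{HH}$, $B=\mathrm{HT}$: here $S_A=3$, $S_B=2$, so $S_A+S_B=5$, yet Appendix~\ref{appendix:len2-threshold} reports the reduced crossover numerator as the \emph{sextic} $p^6-3p^5+2p^4+p^2+p-1$ (degree $6$) with a degree-$7$ denominator, and no common factor cancels. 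So the bound $\le S_A+S_B$ as stated is likely wrong; the weaker bound $\le \frac{L_A(L_A+1)}{2}+\frac{L_B(L_B+1)}{2}$ may survive, but your scepticism is justified and the paper's proof of the sharp constant is not a proof.

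\paragraph{Endpoint limits.}
Here you take a genuinely different route. The paper invokes the ``window argument'' of Theorem~\ref{thm:endpoint-trichotomy}, which partitions the stream into disjoint windows and compares expected window-hit counts; you instead derive a scaling limit $p^{h(T)}\tau_T\Rightarrow\mathrm{Exp}(1)$ as $p\downarrow 0$, using the combinatorial observation that for $h(T)\ge 1$ the full length $\ell=L_T$ is the unique border with $h_\ell=h(T)$. That observation is correct (if $\ell<L_T$ were another such border, then $T[\ell{+}1..L_T]$ is all $\mathrm T$'s, and since the last $\mathrm H$ of the suffix copy $T[L_T{-}\ell{+}1..L_T]=T[1..\ell]$ must sit at the same relative index as the last $\mathrm H$ of the prefix, one gets $L_T=\ell$). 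The scaling-limit proof is cleaner and gives more (a joint distributional limit), at the price of needing to control the tie probability $\Pr(\tau_A=\tau_B)\le\min(\Pr(A),\Pr(B))\to0$, which you do. Both routes are valid.

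\paragraph{Orders of vanishing and the zero in $(0,1)$.}
Your upper bound $1-W^{\mathrm{rtb}}_{A,B}(p)\le\Pr(B)\,\E[\tau_A]=\Theta(p^{\Delta_h})$ and your Paley--Zygmund lower bound are more explicit than the paper's sketch (``there are witness paths realising the upset''), and your final intermediate-value argument is the same as the paper's. These parts are fine.

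\paragraph{Summary.}
Your proof is correct for the endpoint limits, the vanishing orders, and the sign-change corollary, and it takes a different (and arguably more informative) route for the limits. For the degree bound you correctly identify a gap, and that gap is not an artifact of your attempt: the paper's own proof of $\deg\le S_A+S_B$ is not rigorous and the stated bound appears to be violated by the paper's own worked example.
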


\begin{proof}[Proof (Hadamard route)]
For a fixed $T$, write $\mathcal{B}(T)=\{\ell_1,\dots,\ell_r\}$ and let
\[
h_T(x;p)=\sum_{j=1}^{r}\frac{1}{p^{h(T[1..\ell_j])}\,(1-p)^{t(T[1..\ell_j])}\,x^{\ell_j}}.
\]
From $(1+(1-x)h_T)A_T=1$ we clear $x$– and $p$–denominators as follows. Define
\[
M_T(p):=\prod_{j=1}^{r} p^{h(T[1..\ell_j])}(1-p)^{t(T[1..\ell_j])}.
\]
Multiplying by $M_T(p)\,x^{|T|}$ yields a polynomial identity in $x$ and $p$:
\[
Q_T(x;p)\,A_T(x;p)=P_T(x;p),
\]
where $P_T,Q_T\in\mathbb{Z}[p,x]$, with $\deg_p P_T,\deg_p Q_T\le S_T$ and $\deg_x P_T,\deg_x Q_T\le |T|$.
Hence $A_T=P_T/Q_T$ in $\mathbb{Q}(p)(x)$, and
\[
B_T(x;p)=\frac{x}{1-x}\bigl(1-A_T(x;p)\bigr)=\frac{R_T(x;p)}{(1-x)\,Q_T(x;p)}
\]
with $R_T\in\mathbb{Z}[p,x]$, $\deg_p R_T\le S_T$.

Hadamard products of rational series (in $x$ with coefficients in $\mathbb{Q}(p)$) are rational again; thus
\[
(A_A\odot B_B)(x;p)=\frac{U(x;p)}{V(x;p)},\qquad
(A_A\odot A_B)(x;p)=\frac{\tilde U(x;p)}{\tilde V(x;p)},
\]
with $U,V,\tilde U,\tilde V\in\mathbb{Z}[p,x]$ and
\[
\deg_p U,\deg_p V,\deg_p \tilde U,\deg_p \tilde V\ \le\ S_A+S_B.
\]
By Lemma~\ref{lem:abel-hadamard}, the Abelian evaluations at $x\uparrow 1$ exist and equal the sums of coefficients; taking $x\uparrow 1$ in these rational forms therefore gives
\[
W^{\mathrm{rtb}}_{A,B}(p)=\frac{\widetilde N(p)}{\widetilde D(p)}
\]
with $\widetilde N,\widetilde D\in\mathbb{Z}[p]$ and degree bound $\le S_A+S_B$. Reducing the fraction produces the claimed coprime $N_{A,B},D_{A,B}$ with the same degree bound. The $\ell_1$ bounds follow since all coefficients arise from finitely many additions and multiplications of coefficients of $P_T,Q_T,R_T$; each such coefficient is a finite integer linear combination of binomial coefficients coming from $(1-p)^{t(\ell)}$, whence the stated $2^{S_A+S_B}$ growth up to a polynomial factor in $L_A,L_B$.

For the endpoints, the window argument from Theorem~\ref{thm:endpoint-trichotomy} gives the limits. If $h(A)\ne h(B)$, any upset must include at least $\Delta_h$ extra $\mathrm{H}$’s, so the upset probability is $O(p^{\Delta_h})$; conversely there are witness paths realising the upset with probability $\Theta(p^{\Delta_h})$, giving the $\Theta(\cdot)$ order. The $p\uparrow 1$ statement is the tails analogue with $\Delta_t$. In the equal‑count case $h(A)=h(B)\ge 1$ (resp.\ $t(A)=t(B)\ge 1$) the leading‑order per‑window hit probabilities match at order $p^{h(A)}$ (resp.\ $(1-p)^{t(A)}$), and the small asymmetry in the first window that breaks the $1/2$ symmetry is linear in $p$ (resp.\ $1-p$), yielding the $O(p)$ and $O(1-p)$ claims.

Finally, if the endpoint signs differ, continuity forces a zero in $(0,1)$.
\end{proof}

\begin{corollary}[Root-count bound on {[0,1]}]
\label{cor:rootcount}
After cancelling common factors, the number of zeros of $g_{A,B}$ in $[0,1]$ is at most
\[
\deg N_{A,B}\ \le\ S_A+S_B\ \le\ \frac{L_A(L_A+1)}{2}+\frac{L_B(L_B+1)}{2}.
\]
\end{corollary}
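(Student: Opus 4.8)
The plan is to deduce the bound directly from the structural result in Proposition~\ref{prop:structure-g}, which already does the real work: it exhibits $g_{A,B}(p)=N_{A,B}(p)/D_{A,B}(p)$ on $(0,1)$ with $N_{A,B},D_{A,B}\in\mathbb{Z}[p]$ coprime and $\deg_p N_{A,B}\le S_A+S_B$. Granting that, all that is left is to argue that the zeros of the rational function $g_{A,B}$ on $[0,1]$ are exactly the roots of the numerator $N_{A,B}$ lying in $[0,1]$, and then to invoke the elementary fact that a nonzero polynomial of degree $d$ has at most $d$ real roots. (Throughout one assumes $g_{A,B}\not\equiv 0$, i.e.\ $N_{A,B}\not\equiv 0$; otherwise $g_{A,B}$ vanishes identically, as happens when $A=B$, and the statement is to be read as vacuous.)

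First I would note that $g_{A,B}$ is bounded on $(0,1)$, since $W^{\mathrm{rtb}}_{A,B}(p)$ is a probability, so $g_{A,B}(p)\in[-\tfrac12,\tfrac12]$ there. A zero of $D_{A,B}$ in $(0,1)$ not shared by $N_{A,B}$ would make $|g_{A,B}|$ blow up at that point, contradicting boundedness, while a shared zero is ruled out by coprimality; hence $D_{A,B}$ has no zero in $(0,1)$. Likewise, by Proposition~\ref{prop:structure-g} the one-sided limits of $g_{A,B}$ at $0$ and $1$ are finite, which (again using coprimality) forces $D_{A,B}(0)\ne 0$ and $D_{A,B}(1)\ne 0$. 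So $g_{A,B}=N_{A,B}/D_{A,B}$ extends to a real-analytic function on the closed interval $[0,1]$ with $D_{A,B}$ nonvanishing there, and its zero set in $[0,1]$ coincides with $\{p\in[0,1]:N_{A,B}(p)=0\}$.

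It then follows that the number of zeros of $g_{A,B}$ in $[0,1]$ is at most $\deg N_{A,B}\le S_A+S_B$, and the remaining inequality $S_A+S_B\le \tfrac{L_A(L_A+1)}{2}+\tfrac{L_B(L_B+1)}{2}$ is immediate, since the border lengths of a word $T$ are distinct integers in $\{1,\dots,|T|\}$, whence $S_T=\sum_{\ell\in\mathcal{B}(T)}\ell\le 1+2+\cdots+|T|=\tfrac{|T|(|T|+1)}{2}$. There is no serious obstacle here: the heavy lifting is in Proposition~\ref{prop:structure-g}. The only step requiring a moment of care is verifying that $g_{A,B}$ has no pole on $[0,1]$, so that ``zero of $g_{A,B}$'' and ``root of $N_{A,B}$'' genuinely agree and nothing is miscounted; this is exactly what the boundedness of the probability $W^{\mathrm{rtb}}_{A,B}$ and the finite endpoint limits of Proposition~\ref{prop:structure-g} supply. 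If one prefers to avoid the analytic extension to the endpoints altogether, one can instead count zeros only in $(0,1)$ and observe that any zero of $g_{A,B}$ forced at an endpoint by a limiting value $\tfrac12$ is itself a root of $N_{A,B}$, hence already included in the count $\deg N_{A,B}$.
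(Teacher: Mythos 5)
Your proof is correct and does essentially the only natural thing: the corollary is stated in the paper without its own proof, as an immediate consequence of Proposition~\ref{prop:structure-g}, and your argument makes that deduction precise. Your added care about poles (boundedness of $g_{A,B}$ on $(0,1)$ plus the finite endpoint limits, combined with coprimality, force $D_{A,B}$ to be nonvanishing on all of $[0,1]$) and your explicit treatment of the degenerate case $N_{A,B}\equiv 0$ are the right points to spell out, and the final inequality $S_T\le |T|(|T|+1)/2$ follows exactly as you say from the border lengths being distinct integers in $\{1,\dots,|T|\}$.
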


\begin{remark}[What the Hadamard proof buys us]
Unlike the joint prefix automaton, the Hadamard approach keeps everything at the \emph{single‑pattern} level. Clearing the $x$– and $p$–denominators in
\((1+(1-x)h_T)A_T=1\) produces integer polynomials with degree in $p$ controlled by the \emph{border sum} $S_T$. This yields the sharper degree bound $\deg N,\deg D\le S_A+S_B$ and integrality without any large linear systems.
\end{remark}

\subsection*{Example 1: fair coin, \(\mathrm{HH}\) vs \(\mathrm{HT}\)}
Applying the Hadamard method to \(T_1=\mathrm{HH}\) and \(T_2=\mathrm{HT}\) with a fair coin yields
\[
\Pr(\mathrm{HH}< \mathrm{HT})=\frac{39}{121},\qquad
\Pr(\mathrm{HH}=\mathrm{HT})=\frac{17}{121},\qquad
\Pr_{\mathrm{rtb}}(\mathrm{HH}\ \text{beats}\ \mathrm{HT})=\frac{95}{242}\approx 0.39256.
\]
See Appendix~\ref{appendix:hadamard-examples} for the series manipulations and an independent
verification with the joint prefix automaton.

\subsection{Length‑2 reversal: \texorpdfstring{$\mathrm{HH}$ vs $\mathrm{HT}$}{HH vs HT}}
\label{sec:len2-reversal}

For a biased coin with \(\Pr(\mathrm{H})=p\), the same Hadamard calculation gives closed‑form rational
expressions in \(p\) for
\(\Pr(\mathrm{HH}<\mathrm{HT})\), \(\Pr(\mathrm{HH}=\mathrm{HT})\), and hence for the random tie‑break
win probability \(\Pr_{\mathrm{rtb}}(\mathrm{HH}\ \text{beats}\ \mathrm{HT})\).
(Explicit formulas are collected in Appendix~\ref{appendix:hadamard-examples}.)
As \(p\) increases from \(1/2\) to \(1\), the random tie‑break win probability increases
continuously from \(95/242\) to \(1\) and crosses \(1/2\) at
\[
p^\star \approx 0.586648066265160.. 
\]
so \(\mathrm{HH}\) is disadvantaged for \(p<p^\star\) and advantaged for \(p>p^\star\).
\paragraph*{Crossing of means and a reversal window}
The expected waiting times cross at the golden–ratio conjugate
\(p_\varphi:=\tfrac{\sqrt{5}-1}{2}\) because
\(\mathbb{E}[\tau_{\mathrm{HH}}]=\tfrac{1}{p}+\tfrac{1}{p^2}\) and
\(\mathbb{E}[\tau_{\mathrm{HT}}]=\tfrac{1}{p(1-p)}\) are equal precisely when
\(p^2+p-1=0\).
Consequently, for \(p\in(p^\star,p_\varphi)\) (numerically \(p^\star\approx0.586648\),
\(p_\varphi\approx0.618034\)) there is a genuine reversal: \(\mathrm{HH}\) wins the
head‑to‑head (with random tie‑break) with probability \(>\tfrac12\) even though
\(\mathbb{E}[\tau_{\mathrm{HH}}]>\mathbb{E}[\tau_{\mathrm{HT}}]\).
By the symmetry \(p\mapsto 1-p\) (swapping \(\mathrm{H}\leftrightarrow \mathrm{T}\)), the analogous
reversal holds for \(\mathrm{TT}\) versus \(\mathrm{TH}\) on
\(p\in(1-p_\varphi,\,1-p^\star)\) (i.e.\ \(0.381966\ldots< p < 0.4133519\ldots\)).

\subsection{Analyticity in the bias and endpoint asymptotics}
\label{subsec:analyticity-endpoints}

Fix two finite binary strings $T_1,T_2$. For $p\in(0,1)$ let
\[
W^{\mathrm{rtb}}(T_1\!>\!T_2;p)
\]
be the head-to-head win probability under \emph{random tie-break} for two independent Bernoulli$(p)$
streams (Section~\ref{sec:hadamard}), and write
\[
g_{T_1,T_2}(p)\;:=\;W^{\mathrm{rtb}}(T_1\!>\!T_2;p)\;-\;\tfrac12 .
\]
We also write $h(T)$ and $t(T)$ for the number of $\mathrm{H}$’s and $\mathrm{T}$’s in $T$ (so $|T|=h(T)+t(T)$).

\begin{theorem}[Rationality and finiteness of crossover points]
\label{thm:rational-finite-zeros}
For any fixed $T_1,T_2$, the functions $p\mapsto W^{\mathrm{rtb}}(T_1\!>\!T_2;p)$ and
$p\mapsto g_{T_1,T_2}(p)$ are rational on $(0,1)$; in particular $g_{T_1,T_2}$ is real‑analytic
on $(0,1)$ and has only finitely many zeros in $(0,1)$.
\end{theorem}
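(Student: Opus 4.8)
The plan is to obtain this as an essentially immediate consequence of Proposition~\ref{prop:structure-g} together with the boundedness of probabilities. First I would recall that Proposition~\ref{prop:structure-g} already produces coprime polynomials $N_{T_1,T_2},D_{T_1,T_2}\in\mathbb{Z}[p]$ with $g_{T_1,T_2}(p)=N_{T_1,T_2}(p)/D_{T_1,T_2}(p)$ for all $p\in(0,1)$ and $\deg N_{T_1,T_2},\deg D_{T_1,T_2}\le S_{T_1}+S_{T_2}$; so rationality of $g_{T_1,T_2}$ on $(0,1)$ is given, and since $W^{\mathrm{rtb}}(T_1\!>\!T_2;p)=g_{T_1,T_2}(p)+\tfrac12=\bigl(2N_{T_1,T_2}(p)+D_{T_1,T_2}(p)\bigr)/\bigl(2D_{T_1,T_2}(p)\bigr)$, the win probability is rational on $(0,1)$ as well. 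It then remains only to upgrade ``rational function'' to ``real-analytic on $(0,1)$'' and to read off finiteness of the zero set.

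For real-analyticity, the key observation is that $W^{\mathrm{rtb}}(T_1\!>\!T_2;p)$ is a genuine probability, hence lies in $[0,1]$, so $|g_{T_1,T_2}(p)|\le\tfrac12$ uniformly on $(0,1)$. I would then exclude poles: if $p_0\in(0,1)$ were a zero of the reduced denominator $D_{T_1,T_2}$, coprimality forces $N_{T_1,T_2}(p_0)\ne 0$, so $N_{T_1,T_2}/D_{T_1,T_2}$ would blow up near $p_0$, contradicting the uniform bound. Hence $D_{T_1,T_2}$ is zero-free on $(0,1)$, and a ratio of polynomials with non-vanishing denominator is real-analytic there; the same reasoning applies verbatim to $W^{\mathrm{rtb}}$.

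For the zero count, the zeros of $g_{T_1,T_2}$ in $(0,1)$ are exactly the zeros of $N_{T_1,T_2}$ in $(0,1)$. If $N_{T_1,T_2}\not\equiv 0$, it has at most $\deg N_{T_1,T_2}\le S_{T_1}+S_{T_2}$ roots in $\mathbb{C}$, hence finitely many in $(0,1)$; this is precisely the content of Corollary~\ref{cor:rootcount}. I would flag the one genuine caveat: $g_{T_1,T_2}$ can be identically zero (for instance $g_{\mathrm{HT},\mathrm{TH}}\equiv 0$, since $\tau_{\mathrm{HT}}$ and $\tau_{\mathrm{TH}}$ are identically distributed by Proposition~\ref{prop:border-equivalence}, and an independent race between equidistributed stopping times is symmetric, so $\Pr(\tau_{T_1}<\tau_{T_2})=\Pr(\tau_{T_2}<\tau_{T_1})$ and the random-tie-break value is $\tfrac12$); in that degenerate case every $p\in(0,1)$ is a zero, so the finiteness assertion is understood to hold whenever $g_{T_1,T_2}$ is not the zero function.

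I do not expect a serious obstacle here: the substantive work is already packaged in Proposition~\ref{prop:structure-g} — in particular the justification, via Lemma~\ref{lem:abel-hadamard}, that the Abelian limit $x\uparrow 1$ of the rational Hadamard series $(A_{T_1}\odot B_{T_2})(x;p)$ (and of $(A_{T_1}\odot A_{T_2})(x;p)$) is again a rational function of $p$. The only point requiring a moment's care is the pole-exclusion step, where one must genuinely use that $N_{T_1,T_2},D_{T_1,T_2}$ are in lowest terms so that no cancellation can tame a denominator root, together with bookkeeping of the degenerate equidistributed case.
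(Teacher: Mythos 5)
Your proof is correct but takes a genuinely different route from the paper's. The paper proves Theorem~\ref{thm:rational-finite-zeros} directly from the \emph{joint prefix automaton}: for each $p\in(0,1)$ the transient block $Q(p)$ has spectral radius $\rho(Q(p))<1$, so the fundamental matrix $(I-Q(p))^{-1}$ exists, and the absorption-probability vector $\Pi(p)=(I-Q(p))^{-1}R(p)$ is a matrix of rational functions with polynomial entries; no appeal to Proposition~\ref{prop:structure-g} or the Hadamard calculus is made, and the absence of poles in $(0,1)$ comes for free from the spectral-radius bound. You instead cite Proposition~\ref{prop:structure-g} (the Hadamard/Abelian derivation of $g=N/D$) and then exclude poles by a separate, elementary argument: $|g|\le\tfrac12$ because $W^{\mathrm{rtb}}$ is a probability, and coprimality of $N,D$ forbids cancellation at a denominator root. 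Both approaches are sound; the paper's is self-contained and structurally simpler (it doesn't lean on the heavier Proposition~\ref{prop:structure-g}, which itself is the more substantial result), while yours is more economical given that Proposition~\ref{prop:structure-g} appears earlier in the text, and the boundedness-based pole exclusion is a clean and reusable trick. Your caveat about the degenerate case $g\equiv 0$ (e.g.\ $T_1=\mathrm{HT}$, $T_2=\mathrm{TH}$) is a genuine point: the paper's own proof ends with ``zeros of $g$ are zeros of the numerator of a reduced rational function, so there are finitely many,'' which tacitly assumes the numerator is not identically zero; the paper only adds the needed ``non-equivalent pair'' hypothesis downstream in Corollary~\ref{cor:no-endpoint-cluster}. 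Flagging it here, as you do, is the more careful reading.
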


\begin{proof}
Embed the joint race in the standard absorbing prefix automaton with transient block
$Q(p)=p\,Q_{\mathrm{H}}+(1-p)\,Q_{\mathrm{T}}$ and absorbing columns $R_{\mathrm{H}}(p),R_{\mathrm{T}}(p)$ marking immediate wins and ties on the next toss; see Section~\ref{sec:hadamard}.
For each $p\in(0,1)$ the chain is absorbing, hence $\rho(Q(p))<1$ and the fundamental matrix
$(I-Q(p))^{-1}=\sum_{n\ge 0}Q(p)^n$ exists. The absorption probabilities are entries of
\[
\Pi(p)\;=\;(I-Q(p))^{-1}R(p),
\]
a matrix of rational functions because $Q(p),R(p)$ have polynomial entries in $p$.
With random tie‑break,
$W^{\mathrm{rtb}}(T_1\!>\!T_2;p)=\Pi_{\mathbf{W}_{T_1}}(p)+\tfrac12\,\Pi_{\mathbf{D}}(p)$,
hence rational on $(0,1)$. Zeros of $g$ are zeros of the numerator of a reduced rational function, so there are finitely many in $(0,1)$.
\end{proof}

\begin{theorem}[Endpoint trichotomy and symmetry]
\label{thm:endpoint-trichotomy}
Let $T_1,T_2$ be fixed and consider independent Bernoulli$(p)$ streams with random tie‑break.
As $p\downarrow 0$:
\[
\begin{aligned}
\text{\emph{(A)}}\;\;&h(T_1)<h(T_2) \;\Longrightarrow\;
W^{\mathrm{rtb}}(T_1\!>\!T_2;p)\to 1,\\[2pt]
\text{\emph{(B)}}\;\;&h(T_1)>h(T_2) \;\Longrightarrow\;
W^{\mathrm{rtb}}(T_1\!>\!T_2;p)\to 0,\\[2pt]
\text{\emph{(C)}}\;\;&h(T_1)=h(T_2)\ge 1 \;\Longrightarrow\;
W^{\mathrm{rtb}}(T_1\!>\!T_2;p)\to \tfrac12,\\[2pt]
\text{\emph{(D)}}\;\;&h(T_1)=h(T_2)=0 \;\Longrightarrow\;
W^{\mathrm{rtb}}(T_1\!>\!T_2;p)\to \mathbf{1}_{\{|T_1|<|T_2|\}}+\tfrac12\,\mathbf{1}_{\{|T_1|=|T_2|\}}.
\end{aligned}
\]
By complement symmetry ($\mathrm{H}\leftrightarrow \mathrm{T}$), as $p\uparrow 1$ the same statements hold with
$h(\cdot)$ replaced by $t(\cdot)$.
\end{theorem}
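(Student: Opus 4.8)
The idea is to quantify the heuristic that, as $p\downarrow 0$, heads are rare and the first occurrence of a pattern $T$ with $h(T)=k$ heads takes time of order $p^{-k}$. I would organise everything around one distributional lemma: \emph{if $k:=h(T)\ge 1$, then $p^{k}\tau_T$ converges in law, as $p\downarrow 0$, to an exponential variable of mean $1$} (and, trivially, if $k=0$ then $T=\mathrm{T}^{|T|}$ and $\tau_T=|T|$ holds deterministically with probability $(1-p)^{|T|}\to 1$). Granting the lemma, the four cases follow from soft arguments: for (A), a timescale separation; for (B), complementation of (A); for (C), the fact that the joint limit of the two rescaled waiting times is a pair of i.i.d.\ exponentials, hence exchangeable; for (D), that both streams are all-$\mathrm{T}$ on a window of length $\max(|T_1|,|T_2|)$ with probability tending to $1$.

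To prove the lemma I would evaluate the pgf $c_T(x)=\bigl(1+(1-x)\,h_T(x)\bigr)^{-1}$ from Theorem~\ref{th1} along $x=x_p:=e^{-s p^{k}}$ for fixed $s>0$, so that $c_T(x_p)=\mathbb{E}\bigl[e^{-s\,p^{k}\tau_T}\bigr]$. One has $1-x_p=s p^{k}\bigl(1+O(p^{k})\bigr)$, and in $h_T(x_p)=\sum_{\ell\in\mathcal{B}(T)}\Pr\!\bigl(T[1..\ell]\bigr)^{-1}e^{s\ell p^{k}}$ the border $\ell=|T|$ dominates, contributing $p^{-k}(1-p)^{-t(T)}\bigl(1+O(p^{k})\bigr)=p^{-k}\bigl(1+O(p)\bigr)$, while every other border contributes $O\!\bigl(p^{-(k-1)}\bigr)$. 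The crucial combinatorial input is that $\ell=|T|$ is the \emph{unique} border with $h(T[1..\ell])=k$: if a shorter border had all $k$ heads then $T[\ell+1..|T|]$ would be all tails, so the last $\mathrm{H}$ of $T$ sits at some position $j\le\ell$; but the border identity $T[i]=T[|T|-\ell+i]$ at $i=j$ would then force an $\mathrm{H}$ at position $|T|-\ell+j>j$, a contradiction. Hence $h_T(x_p)=p^{-k}\bigl(1+O(p)\bigr)$, so $(1-x_p)h_T(x_p)\to s$ and $c_T(x_p)\to (1+s)^{-1}$, which is the Laplace transform of $\mathrm{Exp}(1)$; the continuity theorem for Laplace transforms gives the claim. (The same estimate re-proves $\mathbb{E}[\tau_T]=p^{-k}\bigl(1+O(p)\bigr)$ — the leading constant is $1$ for \emph{every} such $T$, and this uniformity is what pins down the $\tfrac12$ in case (C).)

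Next I would deduce the four cases. For (A), write $k_i=h(T_i)$ with $k_1<k_2$ and fix the threshold $m=m(p):=\lfloor p^{-(k_1+k_2)/2}\rfloor$. Since $\mathbb{E}[\tau_{T_1}]=O(p^{-k_1})$ (crudely, $\le |T_1|\,p^{-k_1}(1-p)^{-|T_1|}$), Markov gives $\Pr(\tau_{T_1}>m)=O\!\bigl(p^{(k_2-k_1)/2}\bigr)\to 0$; and a union bound over the at most $m$ possible start positions gives $\Pr(\tau_{T_2}\le m)\le m\,\Pr(T_2)\le m\,p^{k_2}=O\!\bigl(p^{(k_2-k_1)/2}\bigr)\to 0$. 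By independence of the two streams, $W^{\mathrm{rtb}}(T_1\!>\!T_2;p)\ge \Pr(\tau_{T_1}\le m)\,\Pr(\tau_{T_2}>m)\to 1$. Case (B) then follows by applying (A) to the pair $(T_2,T_1)$ together with the identity $W^{\mathrm{rtb}}(T_1\!>\!T_2;p)+W^{\mathrm{rtb}}(T_2\!>\!T_1;p)=1$ under random tie-break. For (C), $k_1=k_2=k\ge 1$, so by the lemma and independence $(p^{k}\tau_{T_1},p^{k}\tau_{T_2})$ converges in law to a pair of i.i.d.\ $\mathrm{Exp}(1)$ variables $(E_1,E_2)$; since $\{x<y\}$ is open and $\{x\le y\}$ is closed in $[0,\infty)^2$, the Portmanteau theorem squeezes $\Pr(\tau_{T_1}<\tau_{T_2})\to \Pr(E_1<E_2)=\tfrac12$ and $\Pr(\tau_{T_1}=\tau_{T_2})\to 0$, whence $W^{\mathrm{rtb}}\to\tfrac12$. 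For (D), with $T_i=\mathrm{T}^{a_i}$ and $M=\max(a_1,a_2)$, on the event that both streams begin with $M$ tails — probability $(1-p)^{2M}\to1$ — we have $\tau_{T_i}=a_i$ deterministically, so $T_1$ beats $T_2$ exactly when $a_1<a_2$ with a tie when $a_1=a_2$, while the complementary event contributes $o(1)$; hence $W^{\mathrm{rtb}}\to\mathbf{1}_{\{|T_1|<|T_2|\}}+\tfrac12\,\mathbf{1}_{\{|T_1|=|T_2|\}}$, as stated.

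Finally, the $p\uparrow1$ statement is pure relabelling: swapping $\mathrm{H}\leftrightarrow\mathrm{T}$ turns two independent Bernoulli$(p)$ streams into two independent Bernoulli$(1-p)$ streams and replaces each target by its complement $\overline{T_i}$, with $h(\overline{T_i})=t(T_i)$ and $|\overline{T_i}|=|T_i|$, while leaving the race outcome unchanged; applying the $p\downarrow 0$ trichotomy to $\overline{T_1},\overline{T_2}$ at bias $1-p$ gives the four cases with $h(\cdot)$ replaced by $t(\cdot)$. I expect the only delicate point to be case (C): one must rule out a \emph{persistent} head-to-head bias in the equal-$h$ regime, and this is precisely why the sharp asymptotic $\mathbb{E}[\tau_T]\sim p^{-k}$ with leading constant $1$ — equivalently, the uniqueness of the top border established above — is indispensable; everything else reduces to routine Markov, union-bound and Portmanteau estimates.
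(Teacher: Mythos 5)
Your proof is correct and takes a genuinely different route from the paper's. The paper partitions the stream into length-$L$ windows, bounds per-window occurrence probabilities, and uses Chernoff/Markov for cases (A)/(B); for (C) it invokes a heuristic ``geometric competition'' between windows and dismisses boundary-crossing occurrences as negligible. You instead establish a clean scaling limit --- $p^{h(T)}\tau_T\Rightarrow\mathrm{Exp}(1)$ whenever $h(T)\ge 1$ --- directly from the pgf $c_T$ evaluated along $x=e^{-sp^{k}}$, using the correct and essential combinatorial observation that $\ell=|T|$ is the \emph{unique} border carrying all $k$ heads (your contradiction via the border identity and the last occurrence of $\mathrm{H}$ is sound). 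From this single lemma, (A) follows by Markov and a union bound at the geometric-mean scale $m\approx p^{-(k_1+k_2)/2}$, (B) by complementation, (C) by independence plus the Portmanteau theorem on the open set $\{x<y\}$ and the closed set $\{x\le y\}$, and (D) directly. Your treatment of (C) is notably more careful than the paper's: there, $\pi_{T_i}(p)$ is the probability of occurrence \emph{at the start} of a window, not of containment anywhere in the window; the latter carries combinatorial prefactors like $(L-|T_i|+1)$ that need not match when $|T_1|\ne|T_2|$, and boundary-crossing occurrences are of the \emph{same} order as within-window ones rather than negligible (already visible with $T_1=\mathrm{H}$, $T_2=\mathrm{HT}$, where per-window containment probabilities differ by a factor of roughly two even though $\pi_{T_1}/\pi_{T_2}\to1$). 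Your Laplace-transform lemma, which pins the leading constant in $\mathbb{E}[\tau_T]\sim p^{-k}$ to exactly $1$, is precisely what removes this ambiguity and delivers the $\tfrac12$ in (C). In short, your route is not only valid but in fact tightens the paper's argument for the equal-$h$ case.
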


\begin{proof}
Fix $T$ and let $L=|T|$. For any length-$L$ window $W$ of the stream, the event that $T$ occurs \emph{starting at the first position} of $W$ has probability

\[
\pi_T(p)\;=\;p^{h(T)}(1-p)^{t(T)}.
\]
Hence, for each window $W$,
\[
\pi_T(p)\ \le\ \Pr(T\ \text{occurs somewhere within }W)\ \le\ (L)\,\pi_T(p),
\]
and these events are independent across \emph{non-overlapping} windows.

\emph{Case (A/B).} Suppose $h_1:=h(T_1)<h_2:=h(T_2)$. Choose any exponent $\beta$ with
$h_1<\beta<h_2$ and let $N(p)=\lfloor p^{-\beta}\rfloor$ and $M(p)=\lfloor N(p)/L\rfloor$.
By the bounds above,
\[
\mathbb{E}\big[\#\text{(window-contained $T_1$ in first $M$ windows)}\big]
\ \ge\ M\,\pi_{T_1}(p)\;\asymp\;p^{-\beta}\,p^{h_1}\ \to\ \infty,
\]
whereas
\[
\mathbb{E}\big[\#\text{(window-contained $T_2$ in first $M$ windows)}\big]
\ \le\ M\,L\,\pi_{T_2}(p)\;\asymp\;p^{-\beta}\,p^{h_2}\ \to\ 0.
\]
By independence across windows and Chernoff/Markov bounds, with probability $1-o(1)$ there is
at least one window-contained occurrence of $T_1$ and no occurrence of $T_2$ anywhere
(including boundary-crossing occurrences; the expected number of the latter is also
$O(p^{h_2-\beta})\to 0$) within the first $N(p)$ tosses. In that event, $T_1$ finishes strictly before $T_2$. Thus $W^{\mathrm{strict}}(T_1\!>\!T_2;p)\to 1$, and so does
$W^{\mathrm{rtb}}(T_1\!>\!T_2;p)$. The $h_1>h_2$ case is identical with roles swapped.

\emph{Case (C).} When $h(T_1)=h(T_2)=k\ge 1$, in any window $W$,
\[
\pi_{T_1}(p)=p^{k}(1-p)^{t(T_1)}=p^{k}\,(1+O(p)),\qquad
\pi_{T_2}(p)=p^{k}(1-p)^{t(T_2)}=p^{k}\,(1+O(p)),
\]
so their ratio tends to $1$. On the disjoint-window process the “first window containing either $T_1$ or $T_2$” is a geometric competition with asymptotically equal per-window success probabilities; hence the probability the first such window contains $T_1$ tends to $1/2$. Boundary-crossing occurrences have total probability $O(p^k)$ on any fixed initial horizon and do not affect the limit. Therefore $W^{\mathrm{strict}}(T_1\!>\!T_2;p)\to \tfrac12$, and the same holds for random tie‑break.

\emph{Case (D)} is immediate because an all-$\mathrm{T}$ string occurs deterministically at its length. The $p\uparrow 1$ statements follow by swapping $\mathrm{H}\leftrightarrow \mathrm{T}$.
\end{proof}

\begin{corollary}[No endpoint clustering of crossovers]
\label{cor:no-endpoint-cluster}
For any non-equivalent pair $(T_1,T_2)$, $g_{T_1,T_2}$ has only finitely many zeros in $(0,1)$,
and there exists $\varepsilon>0$ such that:
\begin{itemize}\itemsep2pt
\item if $h(T_1)\ne h(T_2)$ then $g_{T_1,T_2}(p)$ has a fixed nonzero sign on $(0,\varepsilon]$;
\item if $t(T_1)\ne t(T_2)$ then $g_{T_1,T_2}(p)$ has a fixed nonzero sign on $[1-\varepsilon,1)$.
\end{itemize}
\end{corollary}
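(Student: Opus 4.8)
The plan is to read this corollary as a packaging of the two preceding results: Theorem~\ref{thm:rational-finite-zeros} supplies the global finiteness of zeros, and Theorem~\ref{thm:endpoint-trichotomy} pins down the sign near each endpoint. First I would record that, for a non-equivalent pair $(T_1,T_2)$, the function $g_{T_1,T_2}$ is rational on $(0,1)$ and not identically zero; Theorem~\ref{thm:rational-finite-zeros} then gives only finitely many zeros in $(0,1)$, since the zeros of a nonzero reduced rational function are the (finitely many) roots of its numerator. The non-vanishing of $g_{T_1,T_2}$ will be handled together with the endpoint analysis below in the cases $h(T_1)\neq h(T_2)$ or $t(T_1)\neq t(T_2)$, and separately in the remaining degenerate case.

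For the endpoint sign near $p=0$, suppose $h(T_1)\neq h(T_2)$. By cases (A) and (B) of Theorem~\ref{thm:endpoint-trichotomy}, $\lim_{p\downarrow 0}W^{\mathrm{rtb}}(T_1\!>\!T_2;p)$ equals $1$ if $h(T_1)<h(T_2)$ and $0$ if $h(T_1)>h(T_2)$, so $\lim_{p\downarrow 0} g_{T_1,T_2}(p)=c$ with $c=+\tfrac12$ or $c=-\tfrac12$ respectively; in particular $c\neq 0$, so $g_{T_1,T_2}\not\equiv 0$ in this case. By the definition of the one-sided limit there is $\varepsilon>0$ with $|g_{T_1,T_2}(p)-c|<|c|$ for all $p\in(0,\varepsilon]$, which forces $\operatorname{sign} g_{T_1,T_2}(p)=\operatorname{sign} c$ throughout $(0,\varepsilon]$. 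The statement near $p=1$ is identical after applying the complement symmetry $\mathrm{H}\leftrightarrow\mathrm{T}$ (so $h$ is replaced by $t$), using the $p\uparrow 1$ half of Theorem~\ref{thm:endpoint-trichotomy}: if $t(T_1)\neq t(T_2)$ then $\lim_{p\uparrow 1} g_{T_1,T_2}(p)=\pm\tfrac12$, and a symmetric $\varepsilon$-argument gives a fixed nonzero sign on $[1-\varepsilon,1)$.

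It remains to secure the finiteness claim in the one situation left open, namely $h(T_1)=h(T_2)$ and $t(T_1)=t(T_2)$ (hence $|T_1|=|T_2|$ with identical letter composition); here the endpoint limits are only $\tfrac12$ and do not by themselves rule out $g_{T_1,T_2}\equiv 0$. For this I would argue via the explicit rational representation $g_{T_1,T_2}=N_{A,B}/D_{A,B}$ from Proposition~\ref{prop:structure-g}: if $g_{T_1,T_2}\equiv 0$ on $(0,1)$ then $N_{A,B}\equiv 0$, i.e.\ $W^{\mathrm{rtb}}(T_1\!>\!T_2;p)=\tfrac12$ identically, and one shows this is incompatible with $T_1,T_2$ being non-equivalent — for instance by comparing leading behaviour of the Hadamard win series built from the distinct first-hit pgf's $c_{T_1},c_{T_2}$. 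I expect this last point — excluding an exact $\tfrac12$-tie for all $p$ when the letter counts coincide but the first-hit distributions differ — to be the only real obstacle; once it is in place, the corollary follows immediately by combining the finiteness with the endpoint sign computations above.
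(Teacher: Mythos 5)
Your argument mirrors the paper's own proof, which is a single sentence: finiteness from Theorem~\ref{thm:rational-finite-zeros}, fixed endpoint sign from the limits in Theorem~\ref{thm:endpoint-trichotomy}. Your expansion of the endpoint-sign step (read off the $\pm\tfrac12$ limit from cases (A)/(B), then use the definition of one-sided limit to get a constant sign on $(0,\varepsilon]$ and on $[1-\varepsilon,1)$) is correct and is what the paper is implicitly doing.

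Where your proposal diverges is in isolating the subcase $h(T_1)=h(T_2)$ and $t(T_1)=t(T_2)$, where Theorem~\ref{thm:endpoint-trichotomy} gives both one-sided limits $\tfrac12$ and so says nothing against $g_{T_1,T_2}\equiv 0$. Your concern is legitimate: Theorem~\ref{thm:rational-finite-zeros} shows $g$ is rational but not that it is a \emph{nonzero} rational function, and the paper itself exhibits pairs (e.g.\ $\mathrm{HT}$ vs $\mathrm{TH}$) with $g\equiv 0$ for every $p$ — precisely the pairs the ``non-equivalent'' hypothesis is meant to exclude. What is not proved, either by you or by the paper, is that ``non-equivalent'' (in the sense of distinct first-hit laws) implies $g\not\equiv 0$ when the letter counts coincide; your sketch of comparing leading terms of the Hadamard win series is a plausible direction but not a proof. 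So: if ``non-equivalent pair'' is read as ``$g_{T_1,T_2}\not\equiv 0$'' (which is the reading that makes the paper's one-line proof airtight), your argument is complete and matches the paper's; if it is read as ``distinct first-hit laws,'' then the step you flag is genuinely open — but it is open in the paper too, not only in your proposal. Either way, your proof is faithful to the intended argument and more explicit about where its hypotheses are actually used.
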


\begin{proof}
The finiteness of zeros is Theorem~\ref{thm:rational-finite-zeros}. The fixed‑sign claims follow from the limits in Theorem~\ref{thm:endpoint-trichotomy}.
\end{proof}

\begin{remark}[A practical filter for computations]
\label{rem:practical-endpoint-filter}
Corollary~\ref{cor:no-endpoint-cluster} gives two cheap, certified screening tests (filters) for Stage~1/Stage~2 searches:
(i) if $h(T_1)\ne h(T_2)$, there can be no crossover arbitrarily near $p=0$ and the near‑$0$ orientation is determined by $h(\cdot)$; (ii) if $t(T_1)\ne t(T_2)$, likewise near $p=1$. In particular, pairs with different $h$ (resp.\ different $t$) cannot contribute root clusters close to the endpoints.
\end{remark}

\subsection*{Monotonicity in the coin bias \(p\)}
\label{subsec:Monotonicity in the coin bias p}

Fix two target strings \(T_1,T_2\). Let
\[
W^{\mathrm{rtb}}(p):=\Pr(\,T_1\text{ beats }T_2\,;\ \text{random tie-break};\ \Pr(\mathrm{H})=p\,),
\quad
W^{\mathrm{strict}}(p):=\Pr(\tau_{T_1}<\tau_{T_2}),
\]
for two independent Bernoulli\((p)\) streams. By the Hadamard–GF calculus (Section~\ref{sec:hadamard}) and absorbing‑chain representation,
both maps are rational (hence real‑analytic) on \(p\in(0,1)\); in particular, \(W^{\mathrm{rtb}}(p)-\tfrac12\) has only finitely many zeros and the tournament orientation is constant on each interval of the finite partition induced by those zeros (Theorem~\ref{thm:rational-finite-zeros}). Moreover, the signs near the endpoints are governed by the head/tail counts \(h(\cdot),t(\cdot)\) (Theorem~\ref{thm:endpoint-trichotomy}), so there is no clustering of crossovers at \(0\) or \(1\) (Corollary~\ref{cor:no-endpoint-cluster}).

\begin{proposition}[Non‑monotonicity occurs]
\label{prop:nonmono}
In general, neither \(W^{\mathrm{rtb}}(p)\) nor \(W^{\mathrm{strict}}(p)\) is monotone on \(p\in(0,1)\).
\end{proposition}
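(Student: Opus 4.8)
The cleanest route is to exhibit a single explicit pair $(T_1,T_2)$ of short binary strings for which $W^{\mathrm{rtb}}(p)$ (and, with a second example or the same one, $W^{\mathrm{strict}}(p)$) provably fails to be monotone, and to certify non-monotonicity purely from the rational closed form supplied by Proposition~\ref{prop:structure-g}. First I would use the Hadamard--GF calculus of Section~\ref{sec:hadamard} to write down $g_{T_1,T_2}(p)=N_{A,B}(p)/D_{A,B}(p)$ with $N_{A,B},D_{A,B}\in\mathbb{Z}[p]$ of degree at most $S_A+S_B$; this is a finite, exact computation. Then I would evaluate $W^{\mathrm{rtb}}$ at three points $0<p_1<p_2<p_3<1$ (or take one-sided limits at the endpoints via Theorem~\ref{thm:endpoint-trichotomy}) and show the sequence of values is not monotone --- e.g.\ $W^{\mathrm{rtb}}(p_1)<W^{\mathrm{rtb}}(p_2)$ but $W^{\mathrm{rtb}}(p_2)>W^{\mathrm{rtb}}(p_3)$. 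Because all three values are explicit rationals, the required inequalities are decidable by hand (or certifiably by exact arithmetic).

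A natural candidate is a length-mismatched pair where the endpoint limits of Theorem~\ref{thm:endpoint-trichotomy} force opposite behaviour at the two ends while an interior point sits strictly above (or below) both endpoint values. For instance, take $T_1$ with more $\mathrm{H}$'s than $T_2$ but fewer $\mathrm{T}$'s (so that $(h(T_1)-h(T_2))(t(T_1)-t(T_2))<0$): by the trichotomy, $W^{\mathrm{rtb}}(p)\to 0$ as $p\downarrow0$ and $W^{\mathrm{rtb}}(p)\to 1$ as $p\uparrow1$, yet Proposition~\ref{prop:structure-g} already guarantees an interior zero of $g$, and a direct evaluation at a well-chosen $p$ can be made to exceed, say, the value at a larger $p$, breaking monotonicity. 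Alternatively, the reversal pair $\mathrm{HH}$ vs $\mathrm{HT}$ analysed in Section~\ref{sec:len2-reversal} is a ready-made source: there $W^{\mathrm{rtb}}$ runs from $95/242$ at $p=\tfrac12$ up to $1$ at $p=1$, and by $\mathrm{H}\leftrightarrow\mathrm{T}$ symmetry the companion pair on $p<\tfrac12$ supplies a descending branch; splicing via a third string, or examining a length-$3$ pattern pair whose $g$ has two sign changes in $(0,1)$, yields a clean non-monotone profile. The $W^{\mathrm{strict}}$ statement is handled the same way, using the strict-tie Hadamard series $A_{T_1}\odot\tilde B_{T_2}$ in place of $A_{T_1}\odot B_{T_2}$.

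The main obstacle is not conceptual but bookkeeping: one must actually carry out the Hadamard product of two rational generating functions in $x$ with coefficients in $\mathbb{Q}(p)$, reduce the resulting bivariate rational function, take the Abelian limit $x\uparrow1$ cleanly, and simplify to a manageable $N_{A,B}/D_{A,B}$ whose values at the chosen sample points can be compared by inspection. Choosing the shortest pair that already exhibits two crossovers (or one crossover together with a strict local extremum strictly inside $(0,1)$) keeps the degrees small --- Corollary~\ref{cor:rootcount} bounds the number of zeros by $S_A+S_B$, so a length-$3$ or length-$4$ example suffices --- and makes the certifying inequalities short. I would also record, as a sanity check, the agreement of the closed form with a direct absorbing-Markov-chain computation of $W^{\mathrm{rtb}}(p)$ at the sample points, exactly as in the $\mathrm{HH}$ vs $\mathrm{HT}$ verification referenced in Appendix~\ref{appendix:hadamard-examples}.
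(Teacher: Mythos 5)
Your overall methodology is sound and matches the paper's: since $W^{\mathrm{rtb}}$ and $W^{\mathrm{strict}}$ are rational in $p$ (Proposition~\ref{prop:structure-g}, Theorem~\ref{thm:rational-finite-zeros}), it suffices to fix a concrete pair of patterns and certify non-monotonicity by exact evaluation at a few sample points or by an endpoint argument. But the proposal stops at being a plan: it never commits to a working witness, and the one concrete candidate class you do suggest does not carry the argument. If $(h(T_1)-h(T_2))(t(T_1)-t(T_2))<0$, then by Theorem~\ref{thm:endpoint-trichotomy} the endpoint limits of $W^{\mathrm{rtb}}$ are $0$ and $1$, and the guaranteed interior zero of $g_{T_1,T_2}$ is perfectly compatible with $W^{\mathrm{rtb}}$ being strictly increasing (a strictly increasing function from $0$ to $1$ crosses $\tfrac12$ exactly once). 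So ``a direct evaluation at a well-chosen $p$ can be made to exceed the value at a larger $p$'' is an unsubstantiated hope, not a consequence of the structure you cite. Likewise, the idea of ``splicing via a third string'' has no bearing on whether the single function $p\mapsto W^{\mathrm{rtb}}(T_1>T_2;p)$ for a \emph{fixed} pair is monotone.

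The paper sidesteps all of this by choosing a witness where both endpoint limits coincide. For $(T_1,T_2)=(\mathrm{HH},\mathrm{H})$ we have $h(T_1)>h(T_2)$ (so case~(B) gives $W\to 0$ as $p\downarrow0$) and $t(T_1)=t(T_2)=0$ with $|T_1|>|T_2|$ (so case~(D) gives $W\to 0$ as $p\uparrow1$); since $W^{\mathrm{rtb}}>0$ on $(0,1)$ it must attain an interior maximum, hence is non-monotone, with no numerics needed. The paper then also records an equal-length witness ($\mathrm{HHT}$ vs.\ $\mathrm{HTH}$) certified by exact evaluation at $p=0.4,0.5,0.6$, and, for $W^{\mathrm{strict}}$, the pair $\mathrm{TH}$ raced against an independent copy of itself, where the closed form depends on $p$ only through $\alpha=p(1-p)$ and is therefore symmetric about $p=\tfrac12$ with an interior minimum. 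To close the gap in your proposal you should pick one of these (or an equally explicit) pair, write down the rational function, and actually carry out the finitely many sign comparisons; the degree and root-count bounds from Proposition~\ref{prop:structure-g} and Corollary~\ref{cor:rootcount} control how much work this is, but they do not by themselves exhibit non-monotonicity.
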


\noindent\emph{Witnesses and illustrations.}
\begin{itemize}\itemsep4pt

\item \textbf{Unequal lengths, random tie‑break.}
For \((T_1,T_2)=(\mathrm{HH},\mathrm{H})\),
\(W^{\mathrm{rtb}}(\mathrm{HH}>\mathrm{H};p)\) is non‑monotone: it has a unique interior maximum at
\[
p^\star \approx 0.422649725,\qquad
W^{\mathrm{rtb}}(\mathrm{HH}>\mathrm{H};p^\star)\approx 0.138963149,
\]
and vanishes at both extremes,
\(
\lim_{p\downarrow 0}W^{\mathrm{rtb}}= \lim_{p\uparrow 1}W^{\mathrm{rtb}}=0.
\)
Equivalently,
\(W^{\mathrm{rtb}}(\mathrm{H}>\mathrm{HH};p)=1-W^{\mathrm{rtb}}(\mathrm{HH}>\mathrm{H};p)\)
has an interior minimum and tends to \(1\) at both ends.

\item \textbf{Equal lengths, random tie‑break.}
For \(T_1=\mathrm{HHT}\), \(T_2=\mathrm{HTH}\),
\[
W^{\mathrm{rtb}}(0.40) \approx 0.5547588016,\quad
W^{\mathrm{rtb}}(0.50) \approx 0.5564733557,\quad
W^{\mathrm{rtb}}(0.60) \approx 0.5539977106,
\]
so \(W^{\mathrm{rtb}}(p)\) is not monotone.

\item \textbf{Equal lengths, strict ties; i.i.d.\ times.}

Let \(T=\mathrm{TH}\). With \textbf{two independent} copies \(\tau,\tau'\) of \(\tau_\mathrm{TH}\),
\[
W^{\mathrm{strict}}(T>T;p)=\Pr(\tau<\tau')=\frac{1-\Pr(\tau=\tau')}{2}.
\]
Writing \(\alpha:=p(1-p)\), a closed form for the tie probability is
\[
\Pr(\tau=\tau')=\frac{\alpha(1+\alpha)}{(2+\alpha)(1-\alpha)}
=\frac{\alpha(1+\alpha)}{2-\alpha-\alpha^2},
\]
and therefore
\[
W^{\mathrm{strict}}(\mathrm{TH}>\mathrm{TH};p)
=\frac{1-\alpha-\alpha^2}{2-\alpha-\alpha^2}
=\frac{p^4-2p^3+p-1}{p^4-2p^3+p-2}.
\]
In particular,
\[
\begin{aligned}
W^{\mathrm{strict}}(\mathrm{TH}>\mathrm{TH};0.40)&=\tfrac{439}{1064}\approx 0.41259398496,\\
W^{\mathrm{strict}}(\mathrm{TH}>\mathrm{TH};0.50)&=\tfrac{11}{27}\approx 0.40740740741,\\
W^{\mathrm{strict}}(\mathrm{TH}>\mathrm{TH};0.60)&=\tfrac{439}{1064}\approx 0.41259398496,
\end{aligned}
\]
so \(W^{\mathrm{strict}}(\mathrm{TH}>\mathrm{TH};p)\) is non‑monotone in \(p\).
(Indeed the expression depends on \(p\) only through \(\alpha=p(1-p)\), hence it is symmetric under \(p\mapsto 1-p\)
and has its minimum at \(p=\tfrac12\).)

Note also that \(\mathrm{HT}\) and \(\mathrm{TH}\) have identical first‑hit pgf’s for every \(p\)
(their only border is the full length \(2\), with weight \(p(1-p)\)),
hence \(\tau_{\mathrm{HT}}=\tau_{\mathrm{TH}}\) and
\(W^{\mathrm{rtb}}(\mathrm{TH}>\mathrm{HT};p)\equiv\tfrac12\) while
\(W^{\mathrm{strict}}(\mathrm{TH}>\mathrm{HT};p)=W^{\mathrm{strict}}(\mathrm{TH}>\mathrm{TH};p)\).

\end{itemize}

\begin{remark}[Local geometry]
As rational functions, \(W^{\mathrm{rtb}}\) and \(W^{\mathrm{strict}}\) have only finitely many critical points on \((0,1)\).
Determining when these maps are unimodal (or characterising pairs with genuine multiple extrema) remains open; cf.\ Open Question~\ref{oq:monotonicity}.
\end{remark}

\section{Fairness, bias, and paradoxes}

Under a fair die, stochastic dominance induces a total pre‑order on patterns, so independent head‑to‑head racing is transitive. With bias this ordering can fail: win probabilities can move counter to mean waiting times, and directed cycles can occur. We begin with two brief motivators and then return to string racing with i.i.d.\ biased coins (or dice) and random tie‑break unless otherwise stated.

\begin{remark}[Two warm‑up motivators]\label{rem:warmups}
(i) \emph{Shared source (dependent races).} On a 12‑hour clock, start uniformly at random and let three players sit at \(12,4,8\) o’clock; the pointer then advances at unit speed. A direct check gives \(12\succ 4\), \(4\succ 8\), \(8\succ 12\) each with probability \(2/3\); if they race simultaneously, each wins with probability \(1/3\). This shows cycles can arise when players share the same randomness.

(ii) \emph{Independent waiting times.} Let three players have independent, discrete waiting times supported on \(\{1,5,9\}\), \(\{2,6,7\}\), and \(\{3,4,8\}\) respectively, each uniformly distributed. Then player 1 beats 2 with probability \(5/9\), 2 beats 3 with \(5/9\), and 3 beats 1 with \(5/9\). Thus cycles also occur with independent times.  If all three compete simultaneously, the win probabilities are \(11/27,8/27,8/27\). 
\end{remark}

These motivators are deliberately stylised; our focus is on string racing with i.i.d.\ biased coins (or dice) and random tie‑break unless otherwise stated. In this model, we already see that win probabilities can be non‑monotone in the bias \(p=\Pr(\mathrm{H})\) and can move counter to expected waiting times; see the monotonicity subsection above for examples at equal lengths.

\subsection{Bias breaks the order: reversals and cycles}

We adopt a random tie break unless stated otherwise; head-to-head odds are evaluated using the Hadamard–GF formulation from Section~\ref{sec:hadamard} (Lemma~\ref{lem:abel-hadamard}, Remark~\ref{rem:abel-notation}).
Concretely, the relevant Hadamard products of single-pattern series are formed and then evaluated at $x=1$ in the Abelian sense $\lim_{x\uparrow 1}$.
Analogous moment/pgf formulas are available for two-state Markov sources via the gambling-teams approach \cite{PozdnyakovEtAl2006}.

\paragraph{Embedding coin paradoxes into $s$-dice (an ``almost coin'' construction)}
A biased coin is the $s=2$ special case of a $s$-die. While one can obtain an i.i.d.\ Bernoulli stream from an $s$-die by discarding all outcomes outside a chosen pair of faces, this \emph{filtering} does not preserve waiting times for fixed patterns. However, there is a simple way to fix that issue by making the extra faces \emph{rare}, so that the $s$-die behaves almost like a coin.
(If some \(p_a=0\) then any pattern containing \(a\) has \(\tau_T=\infty\) almost surely, and the discussion reduces to the smaller alphabet of positive-probability symbols.)

\begin{proposition}[Coin reversals persist for $s$-dice with rare extra faces]
\label{prop:coin-to-sdie}
Fix $s\ge 3$.  Choose two faces (denoted $\mathrm{H},\mathrm{T}$) and let $p\in(0,1)$.
For $\varepsilon\in(0,1)$ define an $s$-die by
\[
\Pp(\mathrm{H})=(1-\varepsilon)p,\qquad
\Pp(\mathrm{T})=(1-\varepsilon)(1-p),\qquad
\Pp(\text{each other face})=\frac{\varepsilon}{s-2}.
\]
Let $T_1,T_2$ be two targets over $\{\mathrm{H},\mathrm{T}\}$, and let
\[
W_{\mathrm{rtb}}(T_1>T_2)
:=\Pp(\tau_{T_1}<\tau_{T_2})+\tfrac12\,\Pp(\tau_{T_1}=\tau_{T_2})
\]
denote the random tie-break win probability (for independent copies of the source).
If at $\varepsilon=0$ (i.e.\ for the biased coin) we have a \emph{strict} reversal
\[
\E[\tau_{T_1}]>\E[\tau_{T_2}]
\qquad\text{and}\qquad
W_{\mathrm{rtb}}(T_1>T_2)>\tfrac12,
\]
then there exists $\varepsilon_0>0$ such that the same strict reversal holds for all
$\varepsilon\in(0,\varepsilon_0)$ for the above $s$-die.
\end{proposition}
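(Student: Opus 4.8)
The statement is a perturbation/continuity argument. Both conclusions are \emph{strict} inequalities at $\varepsilon=0$, so it suffices to prove that the two maps
\[
\varepsilon\;\longmapsto\;\E[\tau_{T_1}]-\E[\tau_{T_2}]
\qquad\text{and}\qquad
\varepsilon\;\longmapsto\;W_{\mathrm{rtb}}(T_1>T_2)-\tfrac12
\]
are continuous at $\varepsilon=0$ from the right and that their values at $\varepsilon=0$ are exactly the corresponding biased-coin quantities; openness of strict inequalities then produces a common $\varepsilon_0>0$. For the expectations this is immediate from Theorem~\ref{th1}: since $T_1,T_2$ are words over $\{\mathrm H,\mathrm T\}$, their border sets do not depend on the ambient alphabet, and each border prefix $T_i[1..\ell]$ has probability $(1-\varepsilon)^{\ell}p^{\,h(T_i[1..\ell])}(1-p)^{\,t(T_i[1..\ell])}$ under the $s$-die, so $\E[\tau_{T_i}]=\sum_{\ell\in\mathcal B(T_i)}(1-\varepsilon)^{-\ell}p^{-h(T_i[1..\ell])}(1-p)^{-t(T_i[1..\ell])}$ is a finite sum of functions real-analytic on $[0,1)$ whose value at $\varepsilon=0$ is the biased-coin $\E[\tau_{T_i}]$.

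The substantive step is continuity of the win probability. I would embed the parallel race of the two independent $s$-ary streams in the product prefix (DFA) automaton, as in the proof of Theorem~\ref{thm:rational-finite-zeros} but over the $s$-letter alphabet: the transient block $Q(\varepsilon)$ and the absorbing columns $R(\varepsilon)$ (recording a first win for $T_1$, a first win for $T_2$, and a simultaneous finish) have entries that are polynomials in $\varepsilon$, and $W_{\mathrm{rtb}}(T_1>T_2)=\Pi_{\mathbf W_1}(\varepsilon)+\tfrac12\,\Pi_{\mathbf D}(\varepsilon)$ with $\Pi(\varepsilon)=(I-Q(\varepsilon))^{-1}R(\varepsilon)$. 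The key point is that the chain stays absorbing on a right-neighbourhood of $\varepsilon=0$, including at $\varepsilon=0$: even when the extra faces become null, every transient state still has a directed path to a completed pattern using only $\mathrm H$- and $\mathrm T$-steps, which retain positive probability, so $\rho(Q(\varepsilon))<1$ there and $(I-Q(\varepsilon))^{-1}$ is a matrix of rational functions of $\varepsilon$ with no pole at $0$; hence $W_{\mathrm{rtb}}(T_1>T_2)$ is rational, in particular continuous, near $\varepsilon=0$. Finally, at $\varepsilon=0$ the process started from the initial state never visits an ``other-face'' state, so on the states reachable from the start it is exactly the binary joint automaton; thus its absorption probabilities equal the biased-coin head-to-head probabilities, and in particular the value of $W_{\mathrm{rtb}}(T_1>T_2)$ at $\varepsilon=0$ exceeds $\tfrac12$. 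Combining, $\E[\tau_{T_1}]>\E[\tau_{T_2}]$ on some $[0,\varepsilon_1)$ and $W_{\mathrm{rtb}}(T_1>T_2)>\tfrac12$ on some $[0,\varepsilon_2)$, and $\varepsilon_0:=\min(\varepsilon_1,\varepsilon_2)$ works.

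I expect the main obstacle to be precisely the uniform absorbedness as $\varepsilon\downarrow 0$ together with the clean identification of the $\varepsilon=0$ limit with the binary model — that is, ruling out a spurious pole of $(I-Q(\varepsilon))^{-1}$ at $\varepsilon=0$ and checking that no probability mass ``leaks'' through the vanishing faces in the limit. An alternative route that sidesteps spectral estimates uses the Hadamard/generating-function calculus of Section~\ref{sec:hadamard}: the single-pattern series $c_{T_i}(x;\varepsilon)$ depend rationally on $\varepsilon$ with each coefficient $a_{T_i,n}(\varepsilon)\to a_{T_i,n}(0)$, and since $p$ is fixed one obtains a uniform (for small $\varepsilon$) geometric bound $S_{T_i}(n;\varepsilon)\le C\lambda^{\,n}$ with $\lambda<1$, coming from a uniform lower bound on the per-window hit probability $(1-\varepsilon)^{|T_i|}p^{\,h(T_i)}(1-p)^{\,t(T_i)}$. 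This dominating bound licenses passing to the limit $\varepsilon\downarrow 0$ inside the series $\sum_n a_{T_1,n}S_{T_2}(n-1)$ and $\sum_n a_{T_1,n}a_{T_2,n}$ that define $\Pr(T_1\le T_2)$ and $\Pr(T_1=T_2)$, hence in $W_{\mathrm{rtb}}$; the conclusion is then as above.
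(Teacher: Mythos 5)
Your proof is correct and follows essentially the same route as the paper: both quantities are rational functions of the face probabilities (via the absorbing prefix automaton or, equivalently, the Hadamard calculus), so strict inequalities at $\varepsilon=0$ persist for small $\varepsilon>0$. You add a worthwhile check that the paper's one-line proof leaves implicit — that $(I-Q(\varepsilon))^{-1}$ has no pole at $\varepsilon=0$ because every transient state retains a positive-probability path to absorption through $\mathrm{H}/\mathrm{T}$ moves, and that the $\varepsilon=0$ evaluation coincides exactly with the binary joint chain.
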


\begin{proof}[Proof]
For fixed targets, both $\E[\tau_T]$ and $W_{\mathrm{rtb}}(T_1>T_2)$ can be expressed by solving a finite linear system for an absorbing Markov chain on prefix states; equivalently, they are rational functions of the face probabilities.
In particular, they vary continuously with the probability vector.
Since the reversal inequalities are strict at $\varepsilon=0$, they remain valid for all sufficiently small $\varepsilon>0$.
\end{proof}

Proposition~\ref{prop:coin-to-sdie} shows that \emph{every} biased-coin reversal window yields (for each $s\ge 3$) a corresponding open set of biased $s$-dice exhibiting a reversal among targets supported on two faces.
This does \emph{not} address the genuinely $s$-ary question of whether an \emph{arbitrary} non-uniform $s$-die must admit a reversal (or a cycle); we leave that problem open.

\begin{conjecture}[Any bias yields some $2$-player reversal]\label{conj:any-bias-reversal-compact}
For any non-fair die $(p_1,\dots,p_s)$, there exist patterns (over some two faces) producing a reversal under a fixed tie convention.
\end{conjecture}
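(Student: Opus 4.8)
The plan is to reduce the $s$-ary statement to a two-face problem and then realise a reversal for every admissible pair of face-probabilities. First, non-fairness (on the support) forces two faces $a,b$ with $p_a>p_b>0$; I would work only with patterns over $\{a,b\}$, treating every other face as a ``reset'' symbol that zeroes the prefix automaton. Writing $q=p_a+p_b$ and $r=1-q$, the structural point is that, by Theorem~\ref{th1}, for $T\in\{a,b\}^\ast$ the mean $\mathbb{E}[\tau_T]=\sum_{\ell\in\mathcal{B}(T)}\Pr(T[1..\ell])^{-1}$ depends only on $(p_a,p_b)$, while the full waiting-time law --- hence the head-to-head odds produced by the Hadamard calculus of Section~\ref{sec:hadamard} --- also sees $r$. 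So the target becomes: for every $(p_a,p_b,r)$ with $p_a>p_b>0$ and $p_a+p_b+r=1$, produce $T_1,T_2\in\{a,b\}^\ast$ with $\mathbb{E}[\tau_{T_1}]>\mathbb{E}[\tau_{T_2}]$ and $W_{\mathrm{rtb}}(T_1>T_2)>\tfrac12$ (or the analogue for another fixed tie convention). Note that the case $r=0$, $s=2$ of this is the purely binary part of the conjecture, which is itself open.

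Second, I would assemble a finite repertoire of candidate pairs. The first family is the Penney-type pair $A=a^{\,n}$, $B=a^{\,n-1}b$ that appears in the proof of Theorem~\ref{thm:coin-totality-iff-fair} and in the $\mathrm{HH}$ vs $\mathrm{HT}$ analysis of Section~\ref{sec:len2-reversal}: here $\mathbb{E}[\tau_A]=\sum_{\ell=1}^{n}p_a^{-\ell}$ and $\mathbb{E}[\tau_B]=p_a^{-(n-1)}p_b^{-1}$, so ``$A$ is the heavier pattern'' is the explicit inequality $(1-p_a^{\,n})/(1-p_a)>p_a/p_b$, which selects the usable exponents $n$. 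For the head-to-head I would use the Hadamard--GF formula of Section~\ref{sec:hadamard}, decomposing the race by its first length-$n$ window --- in which $A$ finishes on its stream with probability $\Pr(A)=p_a^{\,n}$ and $B$ on its stream with probability $\Pr(B)=p_a^{\,n-1}p_b$, so $A$ gains a first-window edge of order $p_a^{\,n-1}(p_a-p_b)>0$ --- and then bounding the continuation by comparing the conditional laws $\tau_A\mid\tau_A>n$ and $\tau_B\mid\tau_B>n$ through the two prefix automata. For biases near $p_a/q\in(p^\star,p_\varphi)$ this reproduces the known reversal; for other regimes I would try to chain the reversal windows of $a^{\,n}$ vs $a^{\,n-1}b$ across $n$ (with the $a\leftrightarrow b$ mirror), transfer from the coin case via Proposition~\ref{prop:coin-to-sdie} when $r$ is small, and, near the boundary of the probability simplex, combine the endpoint sign information of Theorem~\ref{thm:endpoint-trichotomy} and Proposition~\ref{prop:structure-g} with the reset-free mean ordering to force a sign change of $g_{T_1,T_2}$.

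To cover the regions left open by these pairs I would enlarge the repertoire with patterns engineered to have a ``front-loaded but heavy-tailed'' waiting time --- a short pattern whose large mean comes entirely from borders or from a single occurrence of the rarer face, so that it usually finishes quickly yet occasionally takes very long --- for instance pairs $a^{j}b\,a^{k}$ versus $a^{m}$ with $j,k,m$ tuned against $(p_a,p_b,r)$. For each such pair the mean comparison is read off from Theorem~\ref{th1}, and the head-to-head is estimated by the same first-decisive-window decomposition; the objective is a certified, parameter-uniform covering of the non-fair region by finitely many ``reversal cells''.

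The step I expect to be the main obstacle --- and the reason this is only a conjecture --- is the \emph{uniform} control of the head-to-head probability outside the narrow windows where a reversal is already known. The pair $a^{\,n}$ vs $a^{\,n-1}b$ stops certifying a reversal for biases near $\tfrac12$ and near a vertex of the simplex: there the heavier pattern's waiting time is stochastically only slightly later than the lighter one's, so it tends to \emph{lose} the independent race --- which is consistent with, not a reversal of, the mean ordering --- and one needs a genuinely different family together with estimates on the Abelian limits of the relevant Hadamard products sharp enough to certify $W_{\mathrm{rtb}}>\tfrac12$. Producing such a family valid for all $(p_a,p_b,r)$, or equivalently ruling out a bias admitting no reversal whatsoever, is the crux; at present only the computational classification for short patterns and the partial windows above are available.
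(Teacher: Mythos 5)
This statement is labelled as a \emph{conjecture} in the paper, and the paper offers no proof of it; the surrounding text (Proposition~\ref{prop:coin-to-sdie}, the $k\le8$ census in Appendix~\ref{app:reversal-census-k8}, and Open Question~\ref{oq:paradox-iff-biased}) supplies only partial evidence and explicit witnesses on restricted bias ranges. Your ``proposal'' is therefore correctly calibrated: it is a program, not a proof, and you say so.

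The strategy you outline is sound in its parts that are actually known. Reducing to two faces $a,b$ with $p_a>p_b>0$, treating the remaining faces as reset symbols, and observing that the mean $\mathbb{E}[\tau_T]$ for $T\in\{a,b\}^\ast$ depends only on $(p_a,p_b)$ while the full law (and hence the Hadamard head-to-head odds) also sees $r=1-p_a-p_b$ --- these are all correct consequences of Theorem~\ref{th1} and Section~\ref{sec:hadamard}. The Penney-type family $A=a^{\,n}$, $B=a^{\,n-1}b$ is the right first candidate: the mean comparison $(1-p_a^{\,n})/(1-p_a)>p_a/p_b$ is exactly the paper's selection of $n$ in Theorem~\ref{thm:coin-totality-iff-fair}, and the $n=2$ case reproduces the $(p^\star,p_\varphi)$ window of Section~\ref{sec:len2-reversal}. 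Invoking Proposition~\ref{prop:coin-to-sdie} to transfer coin reversals to dice with small $r$, and Theorem~\ref{thm:endpoint-trichotomy}/Proposition~\ref{prop:structure-g} for endpoint sign information, is also in keeping with the paper's toolkit.

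The genuine gap is exactly the one you name: nothing in the above certifies a reversal \emph{uniformly} over the non-fair simplex. Near $p=\tfrac12$ the $k\le8$ census leaves an explicit no-reversal gap of width $\approx 6.3\times10^{-4}$; the paper has no construction that covers it (presumably one would need unboundedly long patterns as $p\to\tfrac12$). The ``chain the reversal windows across $n$'' step is plausible but unproven --- one would need to show the windows of $a^{\,n}$ vs.\ $a^{\,n-1}b$ (and mirrors) tile $(0,\tfrac12)\cup(\tfrac12,1)$ together with some auxiliary family, and the paper contains no lemma that does this. Likewise the ``first-decisive-window decomposition'' gives only a first-window edge of order $p_a^{\,n-1}(p_a-p_b)$, and you would still have to bound the contributions from later windows sharply enough to push the Abelian limit above $\tfrac12$; this is not carried out and is non-trivial precisely because the heavier pattern is also stochastically later. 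In short, your proposal identifies the correct candidate families and the correct obstruction, but does not (and cannot, with the tools in the paper) close the argument; this is consistent with the statement's status as an open conjecture.

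One small point worth keeping: your parenthetical ``(on the support)'' silently resolves the degenerate case where some $p_i=0$ but the positive faces are uniform --- in that case no reversal exists over the support, so the conjecture must indeed be read as ``not uniform on the support'' (or ``full support and not uniform''), consistent with the paper's own remark about zero-probability faces.
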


\paragraph{Non-transitive cycles for a single biased $3$-die}
Reversals already occur for binary alphabets; the next phenomenon is non-transitivity.
We investigated cycles for a single biased $3$-die with face probabilities
$(p_0,p_1,p_2)\in\Delta^\circ:=\{p_i>0,\ p_0+p_1+p_2=1\}$, where three players race on independent copies of the same source.
Write $W_{\mathrm{rtb}}(X>Y)$ for the corresponding head-to-head win probability between targets $X,Y$.

\begin{proposition}
\label{prop:s3L2-no-cycle}
For $s=3$ and targets of length $L=2$, there is no triple $(T_1,T_2,T_3)$ such that
\[
W_{\mathrm{rtb}}(T_1>T_2)>\tfrac12,\qquad
W_{\mathrm{rtb}}(T_2>T_3)>\tfrac12,\qquad
W_{\mathrm{rtb}}(T_3>T_1)>\tfrac12
\]
for any bias vector $(p_0,p_1,p_2)\in\Delta^\circ$.
\end{proposition}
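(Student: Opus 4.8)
The plan is to prove this structurally, with no case-by-case computation, by combining a monotonicity property of the random-tie-break win probability with a pigeonhole observation about how few distinct waiting-time distributions exist at length $2$.

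First I would reduce the problem to six distribution classes. A length-$2$ word over $\{0,1,2\}$ is either a diagonal word $aa$, with border set $\{1,2\}$, or an off-diagonal word $ab$ ($a\neq b$), with only the full-length border $\{2\}$. By Theorem~\ref{th1} (equivalently Proposition~\ref{prop:border-equivalence}) the first-hit pgf of a length-$2$ word depends only on these border data together with the relevant face-probabilities; in particular $ab$ and $ba$ have the same pgf, and there are exactly $3+3=6$ classes — three ``diagonal'' classes $D_a$ and three ``pair'' classes $P_{\{a,b\}}$. A strict directed $3$-cycle must therefore use three \emph{distinct} classes, since if two of the three targets lie in the same class the edge between them has win probability exactly $\tfrac12$, hence is not strict.

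Next I would establish two facts. (i) \emph{Within-group comparability}: since $h_{D_a}(x)=\frac{1}{p_ax}+\frac{1}{p_a^2x^2}$ and $h_{P_{\{a,b\}}}(x)=\frac{1}{p_ap_bx^2}$, whenever $p_a\ge p_b$ the difference $h_{D_b}-h_{D_a}$ (resp.\ the difference of two $h_{P}$'s along the chain ordered by $p_ap_b$) has nonnegative Laurent coefficients, so Lemma~\ref{lem:factor} in the tail form \eqref{eq:tails-factor} makes the coefficientwise tail difference nonnegative; hence any two diagonal classes, and any two pair classes, are $\sdom$-comparable (or statistically equivalent) \emph{for every} bias vector. (ii) \emph{A monotonicity lemma}: if $\tau_Y\sdom\tau_Z$ (or they are equivalent) then for every opponent $X$ the triple $\{X,Y,Z\}$ has no strict $3$-cycle. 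This follows from the identity
\[
W_{\mathrm{rtb}}(X>Y)=\tfrac12\sum_{n\ge1}\Pr(\tau_X=n)\bigl(\Pr(\tau_Y>n-1)+\Pr(\tau_Y>n)\bigr),
\]
which is monotone non-decreasing in the tail sequence of the opponent: taking $X=Z$ and using $W_{\mathrm{rtb}}(Z>Z)=\tfrac12$ forces $W_{\mathrm{rtb}}(Y>Z)\le\tfrac12$, so the $Y$--$Z$ edge can only point $Z\to Y$; the only strict $3$-cycle containing that edge is $X\to Z\to Y\to X$, and there $W(Y>X)>\tfrac12$ combined with $W(X>Y)\ge W(X>Z)$ (same monotonicity, now with $X$ the fixed first argument) forces $W(X>Z)<\tfrac12$, contradicting the remaining cycle requirement.

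Finally I would invoke pigeonhole: any three distinct classes among the $6=3+3$ must contain two from the same group, which by (i) are comparable for every bias, so by (ii) no strict $3$-cycle is possible; together with the reduction to distinct classes this proves the proposition for all $(p_0,p_1,p_2)\in\Delta^\circ$. The step I expect to be most delicate is (ii): one must get the direction of the monotonicity of $W_{\mathrm{rtb}}$ in the opponent's distribution exactly right and, crucially, realise that merely forcing the orientation of one edge does not by itself forbid a cycle — both candidate cyclic orientations have to be excluded — while also handling the degenerate equalities ($p_a=p_b$, and $ab$ versus $ba$) as tie-edges rather than as strict dominance, since $\sdom$ here is the strict relation. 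None of this requires real computation; the argument is short and purely structural, with the only ``case analysis'' being the $3+3$ pigeonhole.
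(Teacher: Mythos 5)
Your proof is correct, and it is a genuinely different route from the paper, which states Proposition~\ref{prop:s3L2-no-cycle} without an explicit written proof and implicitly relies on the symbolic/computational pipeline (joint prefix automaton, semi-algebraic sign analysis) described in Section~\ref{sec:computations}. Your argument is purely structural: you first observe that length-$2$ words over a $3$-letter alphabet fall into exactly six distribution classes (three diagonal $D_a=aa$ with $\mathcal{B}=\{1,2\}$, and three pair classes $P_{\{a,b\}}$ with $\mathcal{B}=\{2\}$, since $ab$ and $ba$ have the same $h$-polynomial), so a strict $3$-cycle needs three distinct classes; pigeonhole then forces two of them into the same group. Within a group, the $h$-polynomials are totally ordered coefficientwise (order the $D_a$ by $p_a$ and the $P_{\{a,b\}}$ by $p_ap_b$), so by the tail form of Lemma~\ref{lem:factor} the corresponding waiting times are $\sdom$-comparable or equivalent for \emph{every} point of $\Delta^\circ$. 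Finally, your monotonicity observation — $W_{\mathrm{rtb}}(X>Y)=\tfrac12\sum_n\Pr(\tau_X=n)\bigl(\Pr(\tau_Y>n-1)+\Pr(\tau_Y>n)\bigr)$ is nondecreasing in the opponent's survival function, and by the same token nonincreasing in the first player's survival function when the opponent is held fixed — rules out both cyclic orientations on any triple containing a comparable pair, since one orientation needs the forbidden edge $Y\to Z$ and the other would force $\tfrac12<W(X>Z)\le W(X>Y)<\tfrac12$. What this buys relative to the paper's approach is that it handles the entire two-parameter bias simplex at once with no case analysis and no symbolic elimination, and it exposes \emph{why} $L=2$ is safe (the six classes split $3+3$ into two internally totally ordered chains) and why the obstruction disappears at $L=3$ (more classes with incomparable border structure). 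One detail worth tightening in a written version: when you deduce coefficientwise nonnegativity of $T_{T_2}-T_{T_1}$ from nonnegativity of the Laurent coefficients of $h_2-h_1$, you should make explicit that the identity $T_{T_2}-T_{T_1}=(h_2-h_1)c_1c_2$ holds as formal Laurent series about $x=0$ (both sides are rational with the same Laurent expansion), so the negative-power terms of the product cancel and the nonnegative-power terms give the tail inequality — this is slightly different from, and in fact cleaner than, the paper's constant-sign-on-$(0,1)$ argument in the proof of Theorem~\ref{thm:fair-total-order}.
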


\begin{proposition}
\label{prop:s3L3-cycle}
For $s=3$ and $L=3$, there exists a non-empty open region $\mathcal{R}\subset\Delta^\circ$ on which the length-$3$ targets
\[
A=000,\qquad B=020,\qquad C=001
\]
form a strict directed cycle
\[
W_{\mathrm{rtb}}(A>B)>\tfrac12,\qquad
W_{\mathrm{rtb}}(B>C)>\tfrac12,\qquad
W_{\mathrm{rtb}}(C>A)>\tfrac12,
\]
i.e.\ $000>020>001>000$.
In particular, the cycle holds at
\[
(p_0,p_1,p_2)=\Bigl(\frac{624}{1468},\ \frac{399}{1468},\ \frac{445}{1468}\Bigr).
\]
\end{proposition}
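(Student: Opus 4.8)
The plan is to compute the three head-to-head win probabilities $W_{\mathrm{rtb}}(A\!>\!B)$, $W_{\mathrm{rtb}}(B\!>\!C)$, $W_{\mathrm{rtb}}(C\!>\!A)$ as explicit rational functions of the face probabilities $(p_0,p_1,p_2)$ using the Hadamard--generating-function calculus of Section~\ref{sec:hadamard}, then (i) verify the three strict inequalities at the stated rational point by exact arithmetic and (ii) obtain the open region from continuity. First I would record the border data: directly from Definition~\ref{def:borders}, $\mathcal{B}(000)=\{1,2,3\}$, $\mathcal{B}(020)=\{1,3\}$ and $\mathcal{B}(001)=\{3\}$. By Theorem~\ref{th1} this gives
\[
h_A(x)=\frac{1}{p_0x}+\frac{1}{p_0^2x^2}+\frac{1}{p_0^3x^3},\qquad
h_B(x)=\frac{1}{p_0x}+\frac{1}{p_0^2p_2x^3},\qquad
h_C(x)=\frac{1}{p_0^2p_1x^3},
\]
and hence $A_T(x)=c_T(x)=1/(1+(1-x)h_T(x))$ for $T\in\{A,B,C\}$, each rational in $x$ with coefficients in $\mathbb{Q}(p_0,p_1,p_2)$. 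I would then form the shifted survival series $B_T(x)=\tfrac{x}{1-x}(1-A_T(x))$ and recall from Section~\ref{sec:hadamard} that under random tie-break
\[
W_{\mathrm{rtb}}(T_1\!>\!T_2)=\bigl(A_{T_1}\odot B_{T_2}\bigr)(1)-\tfrac12\bigl(A_{T_1}\odot A_{T_2}\bigr)(1),
\]
the value at $1$ being the Abelian limit $\lim_{x\uparrow1}$, which exists by Lemma~\ref{lem:abel-hadamard}.

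Next I would carry out the three required Hadamard products---$(A_A\odot B_B,\ A_A\odot A_B)$, $(A_B\odot B_C,\ A_B\odot A_C)$ and $(A_C\odot B_A,\ A_C\odot A_A)$. Since a Hadamard product of rational series is rational, each is a rational function of $x$ over $\mathbb{Q}(p_0,p_1,p_2)$, computable either by partial fractions in $x$ (the denominators of the $A_T$ factor over $\overline{\mathbb{Q}(p_i)}$), by the resultant/linear-recurrence route, or---equivalently and perhaps most robustly---by solving the finite absorbing-chain linear system on the joint prefix automaton of each pair, with a dedicated absorbing ``tie'' column. Taking $x\uparrow1$ yields $W_{\mathrm{rtb}}(A\!>\!B),\,W_{\mathrm{rtb}}(B\!>\!C),\,W_{\mathrm{rtb}}(C\!>\!A)$ as rational functions $g_1,g_2,g_3\in\mathbb{Q}(p_0,p_1,p_2)$; substituting $p_1+p_2=1-p_0$ reduces each to a rational function of two variables on $\Delta^\circ$.

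Then comes the certified step: substitute $(p_0,p_1,p_2)=(624/1468,\ 399/1468,\ 445/1468)$ and evaluate $g_1,g_2,g_3$ by exact rational arithmetic, checking $g_i>\tfrac12$ for $i=1,2,3$; this establishes $000>020>001>000$ at that point. For the open region, observe that for every $(p_0,p_1,p_2)\in\Delta^\circ$ the joint prefix chain is absorbing, so each $W_{\mathrm{rtb}}$ is a genuine probability in $[0,1]$ and the denominator of $g_i$ has no zero on $\Delta^\circ$; thus $g_1,g_2,g_3$ are continuous there. The set $\mathcal{R}:=\{p\in\Delta^\circ:\ g_1(p)>\tfrac12,\ g_2(p)>\tfrac12,\ g_3(p)>\tfrac12\}$ is then a finite intersection of open sets, hence open, and it contains the witness point, hence is non-empty.

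The main obstacle is the symbolic Hadamard-product / linear-system computation in two parameters and the bookkeeping it entails; this can be shortened substantially by leaning on the structural results already proved (Theorem~\ref{thm:rational-finite-zeros} and Proposition~\ref{prop:structure-g}, adapted verbatim to the $3$-die via the absorbing prefix automaton) to obtain rationality and continuity ``for free'', so that one only needs the exact evaluation at the single rational point to certify the cycle. The one genuinely delicate modelling step is the treatment of simultaneous finishes: the $\tfrac12(A_{T_1}\odot A_{T_2})$ correction---equivalently, a separate absorbing tie state split equally---must be handled consistently across all three pairs, and this is where a naive computation would most easily go wrong.
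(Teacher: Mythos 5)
Your proposal is correct and follows essentially the same route as the paper: compute each $W_{\mathrm{rtb}}(X\!>\!Y)-\tfrac12$ as a rational function of the face probabilities (via the Hadamard--GF calculus or, equivalently, the joint prefix absorbing chain, whose fundamental-matrix determinant gives a denominator with no zeros on $\Delta^\circ$), reduce to strict polynomial sign conditions on the numerators, certify the three inequalities by exact arithmetic at the rational witness point, and invoke continuity to obtain a non-empty open region. The paper does not present a formal proof environment but records precisely this computation in the form of the explicit numerator polynomials $P_{000,020}$, $P_{020,001}$, $P_{001,000}$ in \eqref{eq:P000020}--\eqref{eq:P001000} together with the semi-algebraic description \eqref{eq:R-cycle-def} and the witness; your border data and $h_T$ expressions are all correct and your tie-handling note identifies the one genuinely delicate point.
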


\begin{figure}[t]
  \centering
  \includegraphics[width=0.6\linewidth]{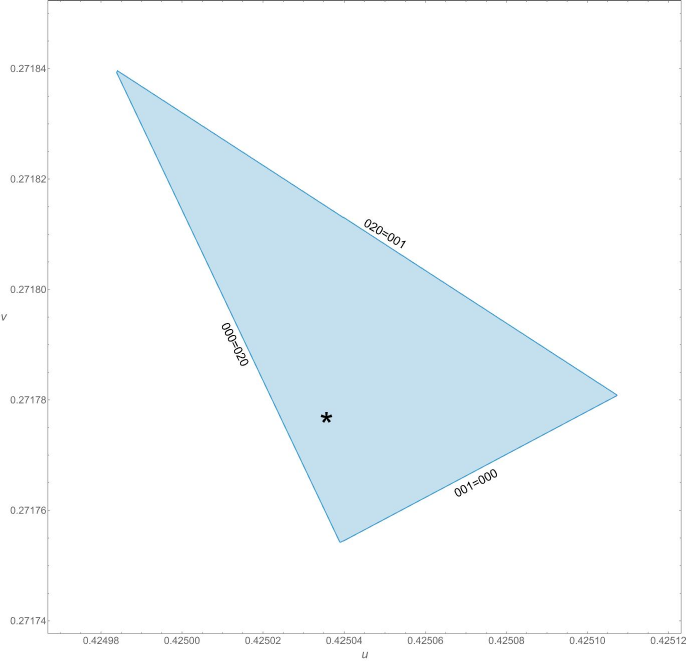}
  \caption{The non-transitivity region $\mathcal{R}\subset\Delta^\circ$ (axes $u=p_0$, $v=p_1$, so $p_2=1-u-v$) for the cycle $000>020>001>000$ under random tie-break. The boundary curves are the loci where the relevant pairwise win probabilities equal $1/2$. The asterisk indicates the point 
$(p_0,p_1,p_2)=\Bigl(\frac{624}{1468},\ \frac{399}{1468},\ \frac{445}{1468}\Bigr)$
mentioned above.}
  \label{fig:nontransitive-region}
\end{figure}

The non-transitive region has a semi-algebraic description.
Set $u:=p_0$, $v:=p_1$, and $p_2=1-u-v$, so $(u,v)$ ranges over the open simplex
\[
\{(u,v)\in \mathbb{R}^2:\ u>0,\ v>0,\ u+v<1\}.
\]
For each ordered pair $(X,Y)$ among $\{A,B,C\}$, the win advantage
$W_{\mathrm{rtb}}(X>Y)-\tfrac12$ is a rational function of $(u,v)$ with a denominator that is strictly positive on the open simplex (it is the determinant of an absorbing-chain fundamental matrix).
Hence each strict inequality $W_{\mathrm{rtb}}(X>Y)>\tfrac12$ is equivalent to a strict polynomial sign condition on the corresponding numerator.
For the cycle $A=000$, $B=020$, $C=001$, the region is therefore the basic open semi-algebraic set
\begin{equation}\label{eq:R-cycle-def}
\mathcal{R}
=\Bigl\{(u,v)\in\mathbb{R}^2:\ u>0,\ v>0,\ u+v<1,\ 
-P_{000,020}(u,v)>0,\ 
P_{020,001}(u,v)>0,\ 
P_{001,000}(u,v)>0
\Bigr\}.
\end{equation}
(Equivalently, set $P_{AB}:=-P_{000,020}$, $P_{BC}:=P_{020,001}$, and $P_{CA}:=P_{001,000}$; then $\mathcal{R}$ is given by $P_{AB}>0$, $P_{BC}>0$, $P_{CA}>0$ within the open simplex.)
The defining polynomials are as follows:
\begin{align}
P_{000,020}(u,v)
&=
u^{13}+3u^{12}v+3u^{11}v^{2}+u^{10}v^{3}
-6u^{12}-15u^{11}v-12u^{10}v^{2}-3u^{9}v^{3}\nonumber\\
&\quad
+14u^{11}+27u^{10}v+15u^{9}v^{2}+2u^{8}v^{3}
-15u^{10}-18u^{9}v-3u^{8}v^{2}+u^{7}v^{3}\nonumber\\
&\quad
+6u^{9}-u^{8}v-5u^{7}v^{2}-u^{6}v^{3}
-u^{8}+u^{7}v+5u^{7}+7u^{6}v+2u^{5}v^{2}\nonumber\\
&\quad
-4u^{6}-u^{5}v+u^{4}v^{2}-2u^{5}-4u^{4}v-u^{3}v^{2}
+3u^{4}+2u^{3}v-u^{3}-u^{2}-uv-u-v+1,
\label{eq:P000020}
\\[0.5em]
P_{020,001}(u,v)
&=
u^{12}v^{3}+3u^{11}v^{4}+3u^{10}v^{5}+u^{9}v^{6}
-5u^{11}v^{3}-12u^{10}v^{4}-9u^{9}v^{5}-2u^{8}v^{6}\nonumber\\
&\quad
+10u^{10}v^{3}+18u^{9}v^{4}+9u^{8}v^{5}+u^{7}v^{6}
-10u^{9}v^{3}-12u^{8}v^{4}-3u^{7}v^{5}
-u^{9}v^{2}+2u^{8}v^{3}-u^{6}v^{5}\nonumber\\
&\quad
+3u^{8}v^{2}+5u^{7}v^{3}+3u^{6}v^{4}+3u^{6}v^{3}+3u^{5}v^{4}
-7u^{6}v^{2}-10u^{5}v^{3}-2u^{4}v^{4}
+7u^{5}v^{2}+4u^{4}v^{3}\nonumber\\
&\quad
+2u^{5}v+2u^{4}v^{2}+2u^{3}v^{3}
-4u^{4}v-4u^{3}v^{2}+2u^{3}v
+u^{2}v+uv^{2}-uv-u-2v+1,
\label{eq:P020001}
\\[0.5em]
P_{001,000}(u,v)
&=
u^{12}v^{3}-2u^{11}v^{3}+u^{10}v^{3}
+u^{8}v^{2}-2u^{7}v^{2}-u^{6}v^{2}+2u^{5}v^{2}\nonumber\\
&\quad
-u^{5}v-u^{4}v+u^{2}v+uv-u+v.
\label{eq:P001000}
\end{align}

Equations \eqref{eq:R-cycle-def}--\eqref{eq:P001000} provide a complete algebraic description of the non-transitivity region for the cycle $000>020>001>000$ under random tie-break (see Fig.~\ref{fig:nontransitive-region}). The region is roughly triangular, but is not a triangle since the polynomials do not have linear factors.

\paragraph{Computational evidence (two-face case, patterns of length $\le 8$)}
We performed an exhaustive, certified computation over all ordered pairs of binary patterns $(T_1,T_2)$ with $|T_1|,|T_2|\le 8$ (under the same tie convention as in the main text), and for each pair determined the maximal bias interval(s) on which
\[
\mathbb{E}[\tau_{T_1}(p)]<\mathbb{E}[\tau_{T_2}(p)]
\quad\text{but}\quad
\mathbb{P}(T_1\ \text{wins against}\ T_2;\,p)<\tfrac12.
\]
Let $\mathcal{R}_{\le 8}$ denote the union of all such reversal windows (an open subset of $(0,1)$).
The certified $k\le 8$ reversal database implies that the complement $(0,1)\setminus \mathcal{R}_{\le 8}$ is a single symmetric ``no-reversal'' gap:
\[
(0,1)\setminus \mathcal{R}_{\le 8} \;=\; (p_-,\,p_+),
\qquad
p_- \approx 0.4996837,\ \ p_+ \approx 0.5003163,
\]
so this gap has width $p_+-p_- \approx 6.326\times 10^{-4}$ and is centred at $\tfrac12$.
Equivalently, for every bias satisfying $|p-\tfrac12|\gtrsim 3.163\times 10^{-4}$ there exists an explicit two-player reversal among patterns of length at most $8$.
The endpoints $p_\pm$ are explicit real-algebraic numbers; see Appendix~\ref{app:reversal-census-k8}, which also contains
exact counts (by pattern lengths $k\le 8$) of number of two-player reversals.

\paragraph{Non-monotonicity in the common bias $p$}
Even with identical, independent coins ($\Pr(\mathrm{H})=p$), the win probability $W(p)$ for $T_1$ vs.\ $T_2$ need not be monotone in $p$; see Section~\ref{subsec:Monotonicity in the coin bias p} for examples (including equal-length pairs). We do not repeat those tables here.

\paragraph{From reversals to cycles (independent biased coins)}
Bias also enables non-transitive $3$-cycles for independent sources.

\begin{proposition}[Length-$3$ non-transitive cycle with independent coins]
\label{prop:L3-cycle-compact}
There exist length-$3$ targets $T_A,T_B,T_C$ and biases $p_A,p_B,p_C$ such that
\[
\Pr(A\!>\!B)>\tfrac12,\quad \Pr(B\!>\!C)>\tfrac12,\quad \Pr(C\!>\!A)>\tfrac12
\]
under random tie-break, while $\mathbb{E}[\tau_{T_A}]>\mathbb{E}[\tau_{T_B}]>\mathbb{E}[\tau_{T_C}]$.
\end{proposition}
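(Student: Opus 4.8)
The plan is to reduce the proposition to producing one explicit rational witness $(T_A,T_B,T_C;p_A,p_B,p_C)$ and certifying it, exploiting the head-to-head Hadamard–generating-function calculus of Section~\ref{sec:hadamard} together with the extra freedom of three \emph{independent} biases. First I would note that the calculus of Section~\ref{sec:hadamard} extends verbatim to players running coins of \emph{different} biases: if player $X$ runs a Bernoulli$(p_X)$ stream with target $T_X$, then
\[
W^{\mathrm{rtb}}(T_X\!>\!T_Y;p_X,p_Y)=\bigl(A_{T_X}(\cdot;p_X)\odot B_{T_Y}(\cdot;p_Y)\bigr)(1)-\tfrac12\bigl(A_{T_X}(\cdot;p_X)\odot A_{T_Y}(\cdot;p_Y)\bigr)(1),
\]
because the Hadamard product only pairs coefficients term by term; equivalently, running the product prefix automaton from the proof of Theorem~\ref{thm:rational-finite-zeros} with two parameters shows this is a rational function of $(p_X,p_Y)$ whose denominator (a fundamental-matrix determinant) is strictly positive on $(0,1)^2$. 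Meanwhile, by Theorem~\ref{th1}, each mean $\E[\tau_{T_X}]=\sum_{\ell\in\mathcal B(T_X)}\Pr(T_X[1..\ell])^{-1}$ is a rational function of $p_X$ \emph{alone} and is non-constant for every length-$3$ pattern. Hence the target set — three win advantages strictly $>0$ plus the two strict mean inequalities — is a basic semi-algebraic subset of $(0,1)^3$, and it suffices to exhibit a single rational point in it.

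Next I would pick a labelled length-$3$ triple whose mean functions are automatically well ordered, so that almost all of the bias freedom can be spent on the cycle. A natural candidate of the right shape is $T_A=\mathrm{HHH}$, $T_B=\mathrm{HTH}$, $T_C=\mathrm{HHT}$, with $\mathcal B=\{1,2,3\},\{1,3\},\{3\}$ respectively, for which Theorem~\ref{th1} gives $\E[\tau_A]=p_A^{-1}+p_A^{-2}+p_A^{-3}$, $\E[\tau_B]=p_B^{-1}+p_B^{-2}(1-p_B)^{-1}$, $\E[\tau_C]=p_C^{-2}(1-p_C)^{-1}$, so that $\E[\tau_A]>\E[\tau_B]>\E[\tau_C]$ holds on a large open region (in particular whenever the three biases are comparable and bounded away from $1$). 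It then remains to locate biases on which the directed $3$-cycle holds under random tie-break; guided by the endpoint trichotomy (Theorem~\ref{thm:endpoint-trichotomy}) and the structural/endpoint analysis of $g_{A,B}$ (Proposition~\ref{prop:structure-g}) — which pin down, for each pairwise contest, the bias ranges where the orientation flips — one searches for a rational point $(p_A,p_B,p_C)$ satisfying the three strict win inequalities and then verifies all five inequalities exactly by evaluating the Step-1 rational functions with exact rational arithmetic, just as in Proposition~\ref{prop:s3L3-cycle} and the appendices. A softer alternative starts from the common-bias paradoxical triple of Theorem~\ref{thm:biased-to-paradox}: its win inequalities are strict, hence persist under small independent bias perturbations, so if the three means at that bias are already (or can be cyclically relabelled into) strictly decreasing order, or differ by margins small enough to be corrected by the perturbation, one is done immediately.

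The main obstacle is not any single computation but the \emph{coupling} of the two requirements through the shared biases: the region of $(p_A,p_B,p_C)$ carrying the win-cycle and the region carrying $\E[\tau_A]>\E[\tau_B]>\E[\tau_C]$ constrain the same parameters, and there is no purely soft reason their intersection is nonempty — if the common-bias cycle of Theorem~\ref{thm:biased-to-paradox} forced the mean order to run the ``wrong'' way by a macroscopic margin, small perturbations could not repair it. This is precisely why the construction proceeds by the targeted choice in the previous paragraph (strings chosen so the mean order is essentially free, all remaining freedom devoted to the cycle) followed by an explicit certified witness; the rest is the routine exact sign-checking of the resulting polynomial inequalities.
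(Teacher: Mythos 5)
Your approach is essentially the paper's: the proof there is simply a certified explicit witness, and you even selected the identical triple $(T_A,T_B,T_C)=(\mathrm{HHH},\mathrm{HTH},\mathrm{HHT})$. The only missing piece is the actual numerical witness — the paper gives $p_A=0.61$, $p_B=0.71$, $p_C=0.51$, verified by the Hadamard--GF calculus and cross-checked with the joint prefix automaton — which your outlined search-and-certify step would produce; one small caveat is that your remark that the mean ordering holds ``whenever the three biases are comparable and bounded away from $1$'' is not quite right (at a common bias $p>\tfrac12$ one has $\E[\tau_{\mathrm{HHH}}]<\E[\tau_{\mathrm{HTH}}]$ since $p^{-3}<p^{-2}(1-p)^{-1}$), but this is harmless because you explicitly allow and exploit unequal biases, and indeed the paper's witness uses $p_A\ne p_B\ne p_C$.
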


\noindent\textit{Witness.}
One instance is
\(
T_A=\mathrm{HHH},\ p_A=0.61;\ 
T_B=\mathrm{HTH},\ p_B=0.71;\ 
T_C=\mathrm{HHT},\ p_C=0.51,
\)
evaluated via the Hadamard--GF method (cross‑checked by the joint prefix automaton).

To illustrate the robustness of this paradox, we visualise the solution space in Figure~\ref{fig:mcdonalds}. The plot depicts the precise locus of bias triples $(p_A, p_B, p_C)$ for which the non-transitive cycle $A \to B \to C \to A$ holds. The resulting volume forms a distinctive arch-like structure (resembling a ``McDonald's'' arch) within the unit cube, verifying that the phenomenon persists over a continuous range of parameters rather than being an isolated singularity.

\begin{figure}[h!]
    \centering
    \includegraphics[width=0.44\linewidth]{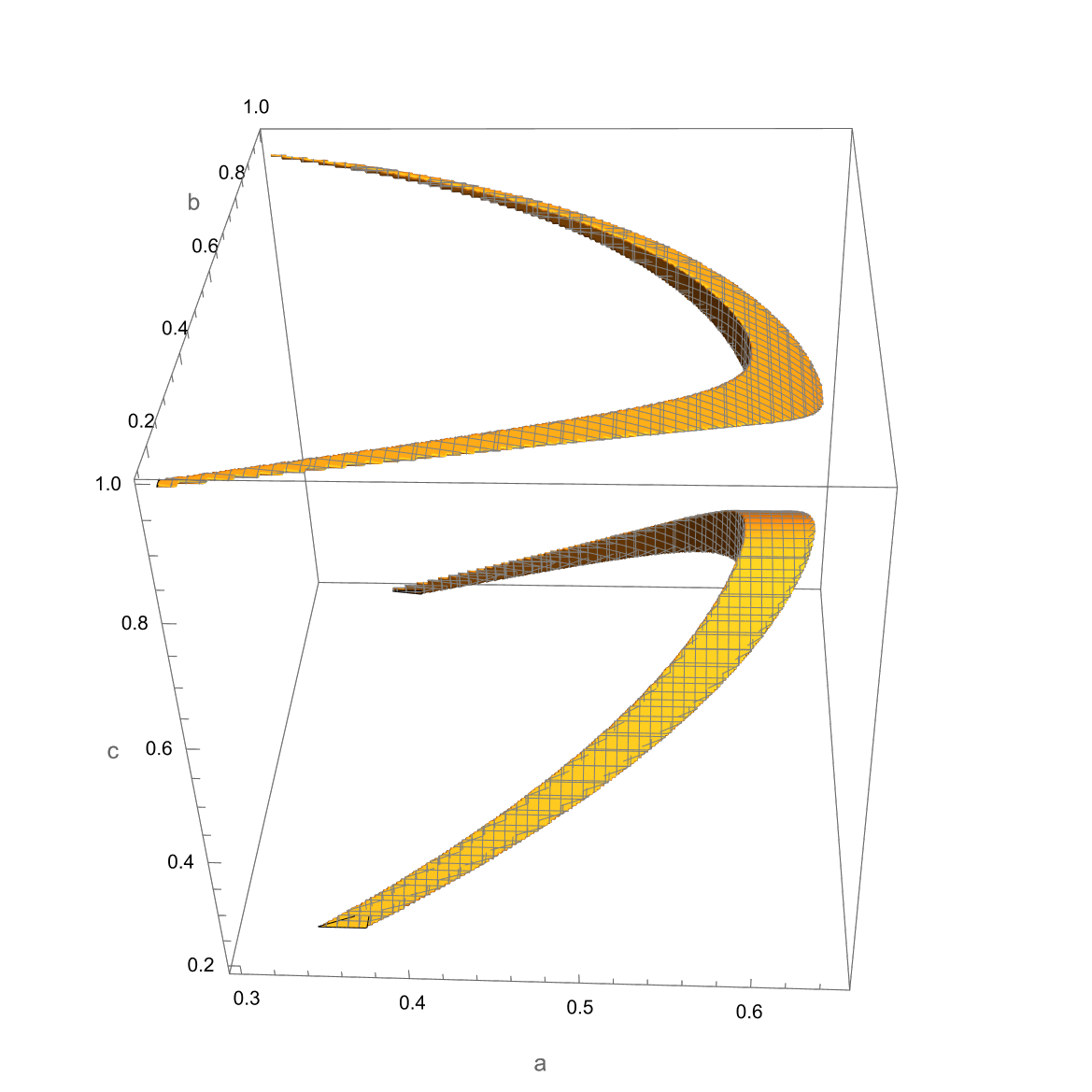} \includegraphics[width=0.55\linewidth]{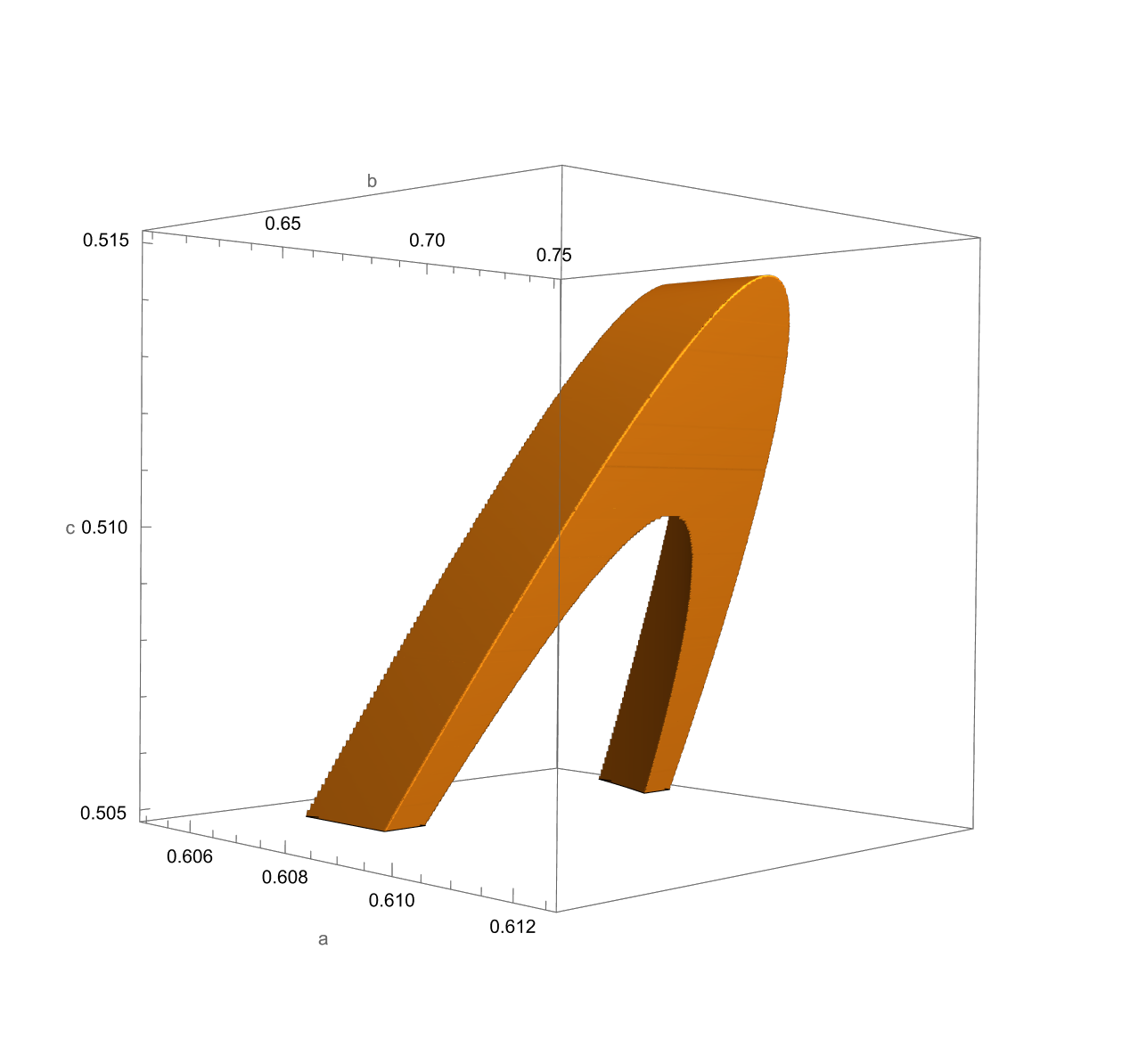} 
    \caption{The volume of independent bias triples $(p_A, p_B, p_C)$ for which the patterns $T_A=\mathrm{HHH}$, $T_B=\mathrm{HTH}$, and $T_C=\mathrm{HHT}$ form a non-transitive cycle under random tie-break (axes are $p_A,p_B,p_C\in(0,1)$). On the left, the twin arches are shown, while the right shows in detail the peak of one arch. }
    \label{fig:mcdonalds}
\end{figure}

\paragraph{Common bias, unequal lengths}
As shown by the certified census in Theorems~\ref{fact:upto5-two} and~\ref{fact:upto8-counts},
non-transitive $3$-cycles already occur for a \emph{common} bias $p$ when unequal lengths are allowed.
Explicit witnesses (including examples where the mean waiting times are strictly ordered)
are given in Appendix~\ref{appendix:eq-bias-unequal-lengths} (see also Appendix~\ref{app:255}).

\section{Computational methods and certified results}\label{sec:computations}

This section records what we verified by computer and outlines the algorithms we used.
Unless stated otherwise, head–to–head odds are computed under \emph{random tie‑break}, for independent sources with \(\Pr(H)=p\) (common bias) or \(\Pr(H)=p_\bullet\) per player in the unequal‑bias setting. All arithmetic is \emph{exact}: crossover points are represented as algebraic numbers and inequalities are certified by root isolation and sign evaluation.

\subsection*{Computationally certified theorems}

\begin{theorem}[Length \(2\), arbitrary player biases]\label{fact:L2-no-cycles}
For \emph{all} triples of length‑\(2\) coin strings and \emph{arbitrary} player biases \((p_A,p_B,p_C)\in(0,1)^3\), there is \emph{no} non‑transitive triple under independent sources and random tie‑break.  Equivalently, the directed tournament on \(\{\mathrm{HH},\mathrm{HT},\mathrm{TH},\mathrm{TT}\}\) induced by head‑to‑head odds has no directed \(3\)-cycle for any \((p_A,p_B,p_C)\).
\end{theorem}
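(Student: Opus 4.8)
The plan is to avoid a direct sign analysis of the rational win functions and instead reduce the four length-$2$ patterns to two \emph{chains} of waiting-time laws that are totally ordered by stochastic dominance, then show that a directed $3$-cycle cannot contain two stochastically comparable competitors. First split the patterns into the \emph{run class} $\{\mathrm{HH},\mathrm{TT}\}$ and the \emph{alternation class} $\{\mathrm{HT},\mathrm{TH}\}$. For $\mathrm{HT}$ (and symmetrically $\mathrm{TH}$) the only border is the full length, so by Theorem~\ref{th1} the first-hit pgf, hence the tail generating function, depends on the bias only through $\alpha:=p(1-p)$; indeed a length-$n$ prefix avoids $\mathrm{HT}$ iff it is of the form $\mathrm{T}^a\mathrm{H}^b$, so $G_\alpha(x):=\sum_{n\ge0}\Pr(\tau_{\mathrm{HT}}>n)\,x^n=\frac{1}{1-x+\alpha x^2}$. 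By pigeonhole, any triple of distinct patterns contains two patterns from one class, so it suffices to show that (a) over all biases the laws of $\tau_{\mathrm{HH}}$ and $\tau_{\mathrm{TT}}$ form a single $\sdom$-chain, and (b) likewise for $\tau_{\mathrm{HT}},\tau_{\mathrm{TH}}$.

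For (a), use the monotone coupling in which two coins of biases $r_1<r_2$ are driven by common uniforms $U_t$ (head iff $U_t<r_i$); then a larger bias produces $\mathrm{HH}$ no later, pathwise, so $\tau_{\mathrm{HH}}$ is stochastically decreasing in the bias, and $\tau_{\mathrm{TT}}$ at bias $p$ equals $\tau_{\mathrm{HH}}$ at bias $1-p$ by the $\mathrm{H}\leftrightarrow\mathrm{T}$ symmetry; hence the run-class laws are a chain. For (b), use the tail gf above: for $0<\alpha_1<\alpha_2\le\tfrac14$,
\[
G_{\alpha_1}(x)-G_{\alpha_2}(x)=\frac{(\alpha_2-\alpha_1)\,x^2}{(1-x+\alpha_1 x^2)(1-x+\alpha_2 x^2)}=(\alpha_2-\alpha_1)\,x^2\,G_{\alpha_1}(x)\,G_{\alpha_2}(x),
\]
and since each $G_{\alpha_i}$ has nonnegative coefficients (it is the tail gf of a proper waiting time), the right-hand side has nonnegative coefficients, with a strictly positive coefficient at $x^2$. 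Thus the smaller $\alpha$ yields strict stochastic dominance, so the alternation-class laws form a chain indexed by $\alpha\in(0,\tfrac14]$.

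For the cycle argument, recall from the conventions of Section~\ref{sec:hadamard} that, for an independent $X$,
\[
W_{\mathrm{rtb}}(X\!>\!Y)=\tfrac12\sum_{n\ge1}\Pr(X=n)\bigl(\Pr(Y>n)+\Pr(Y>n-1)\bigr),
\]
so $\tau\sdom\tau'$ implies $W_{\mathrm{rtb}}(X\!>\!\tau)\ge W_{\mathrm{rtb}}(X\!>\!\tau')$, hence $W_{\mathrm{rtb}}(\tau\!>\!X)\le W_{\mathrm{rtb}}(\tau'\!>\!X)$, with equality iff $\tau,\tau'$ are statistically equivalent (using that length-$2$ waiting times have full support on $\{2,3,\dots\}$), while $W_{\mathrm{rtb}}(X\!>\!X)=\tfrac12$. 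Now suppose $T_1,T_2,T_3$ formed a directed $3$-cycle for some biases; relabel so that $T_2,T_3$ lie in a common class. They are $\sdom$-comparable; if statistically equivalent there is no strict edge between them, impossible in a $3$-cycle, so (possibly swapping the two names) $\tau_{T_3}\sdom\tau_{T_2}$ strictly. By the displayed monotonicity and $W_{\mathrm{rtb}}(X\!>\!X)=\tfrac12$, this forces $W_{\mathrm{rtb}}(T_2\!>\!T_3)>\tfrac12$, i.e.\ the edge $T_2\!\to\!T_3$, so the cyclic order must be $T_2\!\to\!T_3\!\to\!T_1\!\to\!T_2$. But the edge $T_3\!\to\!T_1$ gives $W_{\mathrm{rtb}}(T_3\!>\!T_1)>\tfrac12$, while $\tau_{T_3}\sdom\tau_{T_2}$ gives $W_{\mathrm{rtb}}(T_3\!>\!T_1)\le W_{\mathrm{rtb}}(T_2\!>\!T_1)$; hence $W_{\mathrm{rtb}}(T_2\!>\!T_1)>\tfrac12$, contradicting $T_1\!\to\!T_2$. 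Therefore no $3$-cycle exists for any $(p_A,p_B,p_C)$, which is the claim.

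The delicate point is step (b): one genuinely needs the \emph{stochastic-dominance} total order on the $\mathrm{HT}/\mathrm{TH}$ family, not merely an ordering by means, since for biased coins these disagree (cf.\ the $\mathrm{HH}$ versus $\mathrm{HT}$ reversal discussed earlier). The slick resolution rests on recognising the tail generating function as $1/(1-x+\alpha x^2)$ with $\alpha=p(1-p)$ and on the resulting one-line nonnegativity-of-coefficients factorisation; a more direct induction on the tail probabilities runs into sign cancellations in their $\alpha$-derivative. A fully computational route — writing each pairwise advantage $g_{T_i,T_j}$ as an explicit rational function (Proposition~\ref{prop:structure-g}) and certifying by exact root isolation that the three simultaneous strict sign conditions admit no common solution — is what the ``certified'' label refers to; the argument above is the conceptual reason it must come out this way.
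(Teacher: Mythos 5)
The proposal is correct, and it takes a \emph{genuinely different route} from the paper. The paper's proof of Theorem~\ref{fact:L2-no-cycles} is computational: it symbolically sets up the joint prefix chain for each pairwise comparison, obtains the three advantage functions as rational functions of $(p_A,p_B,p_C)$, and certifies by exact sign analysis on a cylindrical algebraic partition that the three strict sign conditions cannot hold simultaneously. Your proof is conceptual and entirely avoids that three-parameter sign analysis. You split the four length-$2$ patterns into the run class $\{\mathrm{HH},\mathrm{TT}\}$ and the alternation class $\{\mathrm{HT},\mathrm{TH}\}$, observe that the waiting-time law of each class member, over all biases, is parameterised by a single monotone parameter (the bias $r$ for $\tau_{\mathrm{HH}}(r)$ via monotone coupling and the $\mathrm{H}\leftrightarrow\mathrm{T}$ symmetry; $\alpha=p(1-p)$ for the alternation class via the tail generating-function identity $G_\alpha(x)=1/(1-x+\alpha x^2)$ and the factorisation $G_{\alpha_1}-G_{\alpha_2}=(\alpha_2-\alpha_1)x^2G_{\alpha_1}G_{\alpha_2}$ with nonnegative coefficients), so that each class is a $\sdom$-chain. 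Pigeonhole then gives two $\sdom$-comparable competitors in any triple, and the monotonicity $W_{\mathrm{rtb}}(\tau\!>\!X)\le W_{\mathrm{rtb}}(\tau'\!>\!X)$ whenever $\tau\sdom\tau'$ (your displayed formula with full-support $X$ supplying the required strictness) rules out a directed $3$-cycle by the standard contradiction.

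I checked the details: the pathwise coupling for $\mathrm{HH}$ is valid; the tail generating function and the factorisation identity are correctly computed; the formula $W_{\mathrm{rtb}}(X\!>\!Y)=\tfrac12\sum_{n\ge1}\Pr(X=n)\bigl(\Pr(Y>n)+\Pr(Y>n-1)\bigr)$ is the correct expansion of $\Pr(X<Y)+\tfrac12\Pr(X=Y)$; full support of length-$2$ hitting times on $\{2,3,\dots\}$ holds for $p\in(0,1)$, which suffices for strictness; and the tournament contradiction is sound. What your route buys is an actual \emph{proof} rather than a certification, plus a reusable principle (no $3$-cycle can contain two $\sdom$-comparable competitors, under random tie-break and independent sources). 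What it loses is generality: the clean two-chain decomposition of the pattern set is special to length $2$ (for longer patterns, no bias-independent partition into $\sdom$-chains exists, so the computational census in Theorems~\ref{fact:upto4-none}--\ref{fact:upto8-counts} is still needed). I would encourage you to state the cycle-exclusion observation as a lemma; it gives the ``conceptual reason'' remark at the end of your write-up a precise form and could serve as a pruning rule in the pipeline of Section~\ref{sec:computations}.
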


\begin{theorem}[Lengths \(\le 4\), common bias]\label{fact:upto4-none}
For all triples \(T_A,T_B,T_C\) with lengths \(L_A,L_B,L_C\le 4\) and a \emph{common} bias \(p\in(0,1)\), there are \emph{no} non‑transitive triples.
\end{theorem}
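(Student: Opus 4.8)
The statement quantifies over a continuum of biases, but it reduces to a finite certified computation, which is what we would carry out. The words in play are the $30$ binary strings of length at most $4$; by Theorem~\ref{fact:L2-no-cycles} any triple whose three words all have length $2$ is already known to be cycle-free, so it suffices to check the remaining triples. Two further identifications shrink the search: words with identical first-hit pgf's (Proposition~\ref{prop:border-equivalence}, e.g.\ $\mathrm{HT}$ and $\mathrm{TH}$, or $\mathrm{HHT}$ and $\mathrm{THH}$) may be merged, and a triple two of whose members are so identified cannot support a directed $3$-cycle because the edge between those two is absent; and the $\mathrm{H}\leftrightarrow\mathrm{T}$ involution sends a directed $3$-cycle at bias $p$ among $(T_A,T_B,T_C)$ to one at bias $1-p$ among the complemented patterns, so one need only run over a transversal of triples under this involution. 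For each ordered pair $(T_1,T_2)$ I would form the single-pattern pgf's $A_{T_i}(x;p)$ of Theorem~\ref{th1}, build the Hadamard products $A_{T_1}\odot\tilde B_{T_2}$ and $A_{T_1}\odot A_{T_2}$ over $\mathbb{Q}(p)$ as in Section~\ref{sec:hadamard}, and take the Abelian limit $x\uparrow1$ (Lemma~\ref{lem:abel-hadamard}) to obtain $g_{T_1,T_2}(p)=W^{\mathrm{rtb}}(T_1\!>\!T_2;p)-\tfrac12$ explicitly as an element of $\mathbb{Q}(p)$. By Proposition~\ref{prop:structure-g} this equals $N_{T_1,T_2}(p)/D_{T_1,T_2}(p)$ with coprime $N,D\in\mathbb{Z}[p]$ and $\deg N,\deg D\le S_{T_1}+S_{T_2}\le 20$; since $g$ is bounded on $(0,1)$, the denominator $D$ has no zero there and hence a fixed sign, so after normalising $D>0$ on $(0,1)$ the edge ``$T_1\to T_2$ at bias $p$'' is exactly the sign condition $N_{T_1,T_2}(p)>0$, and it can switch only at the finitely many roots of $N_{T_1,T_2}$ in $(0,1)$ (cf.\ Theorem~\ref{thm:rational-finite-zeros}).

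Next I would isolate, by certified real-root isolation over $\mathbb{Q}$ (Sturm sequences, or interval refinement of square-free parts), all roots in $(0,1)$ of every numerator $N_{T_1,T_2}$, and take the common refinement $0=q_0<q_1<\cdots<q_M=1$ of these root sets; the degree bound $\le 20$ keeps $M$ small, and the endpoint information of Theorem~\ref{thm:endpoint-trichotomy} and Corollary~\ref{cor:no-endpoint-cluster} (a pair with $h(T_1)\neq h(T_2)$ has no crossover near $p=0$, and likewise for $t(\cdot)$ near $p=1$) prunes many pairs from contributing any crossover. On each open cell $(q_k,q_{k+1})$ every sign $\operatorname{sign} N_{T_1,T_2}$ is constant, so the induced win-tournament $\mathcal{T}_k$ on the $30$ words (edge $T_1\to T_2$ iff $g_{T_1,T_2}>0$, no edge between words with identical first-hit pgf's) is one fixed digraph, determined by an exact sign evaluation at an interior rational sample point of the cell. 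The last step is purely combinatorial: for each cell $k$, verify that $\mathcal{T}_k$ contains no directed $3$-cycle. Running this over all cells and all triples (equivalently, over a transversal) establishes the theorem.

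The computation is routine in \emph{structure}; the real work, and the honest obstacle, is keeping it \emph{certified}. By Proposition~\ref{prop:structure-g} the numerators $N_{T_1,T_2}$ can have degree near $20$ and coefficients of size on the order of $2^{20}$ up to polynomial factors, and many of the relevant crossovers lie very close to $p=\tfrac12$; floating point is therefore unsafe, and the Hadamard-product rationalisations, the $x\uparrow1$ limits, the root isolation, and every sign evaluation must be done in exact rational arithmetic (with the absorbing prefix automaton of Theorem~\ref{thm:rational-finite-zeros} as an independent cross-check of the $g_{T_1,T_2}$). The remaining points require care rather than cleverness: correctly handling words of unequal length \emph{inside} the common-bias model, treating pairs with identical pgf's as non-edges, and applying the $\mathrm{H}\leftrightarrow\mathrm{T}$ symmetry consistently. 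Finally, one should not expect a structural shortcut here: the common-bias hypothesis is essential, since already at length $3$ with distinct per-player biases a non-transitive cycle exists (Proposition~\ref{prop:L3-cycle-compact}).
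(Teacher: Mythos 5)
Your proposal correctly identifies this as a certified finite computation and reconstructs essentially the same pipeline the paper describes in its computational methods section: symmetry reduction by border equivalence and complement duality, exact rational forms for $g_{T_1,T_2}(p)$, certified root isolation to partition $(0,1)$ into cells, exact per-cell sign evaluation of the tournament, and cycle detection. The paper uses the joint prefix automaton as its primary engine for producing the rational functions whereas you lean on the Hadamard--Abelian calculus, but the paper explicitly notes the two are equivalent, so this is a presentational rather than substantive difference.
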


\begin{theorem}[Lengths \(\le 5\), common bias]\label{fact:upto5-two}
For all triples \(T_A,T_B,T_C\) with lengths \(L_A,L_B,L_C\le 5\) and a common bias, there is exactly one non‑transitive family \emph{up to equivalence}, where we include the \(\mathrm{H}\leftrightarrow \mathrm{T}\) duality (together with \(p\mapsto 1-p\)) as part of the equivalence. Equivalently, if one distinguishes \(\mathrm{H}\) and \(\mathrm{T}\), there are exactly two dual families, each occurring on a nonempty bias interval \(I\subset(0,1)\).
For each such interval we compute explicit algebraic endpoints \(P_{\mathrm{low}}<P_{\mathrm{high}}\) so that the cycle holds for all \(p\in(P_{\mathrm{low}},P_{\mathrm{high}})\).
\end{theorem}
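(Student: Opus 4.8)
\medskip
\noindent\emph{Proof strategy.} This is a finite exact computation; the plan is to reduce the claim to a bounded family of polynomial sign conditions in $p$ and certify each one by exact real root isolation, in direct analogy with the $s=3$, $L=3$ example of Proposition~\ref{prop:s3L3-cycle} but now over the binary alphabet with the single parameter $p$. First I would enumerate the $2^1+2^2+2^3+2^4+2^5=62$ binary strings of length at most $5$ and collapse them under \emph{statistical equivalence}: by Proposition~\ref{prop:border-equivalence}, strings with the same border lengths and the same letter-counts at those borders have identical first-hit pgf's and are therefore interchangeable in every head-to-head comparison (e.g.\ $\mathrm{HT}$ and $\mathrm{TH}$). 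I would also record the involution $\iota$ that swaps $\mathrm{H}\leftrightarrow\mathrm{T}$ while sending $p\mapsto 1-p$; since $\iota$ is a symmetry of the model, cycle regions occur in $\iota$-pairs. Finally, Theorem~\ref{fact:upto4-none} already excludes cycles among patterns of length $\le 4$, so every new cyclic triple must contain at least one length-$5$ pattern, which trims the search considerably.

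Next, for each ordered pair $(T_1,T_2)$ of representatives I would compute $g_{T_1,T_2}(p)=W^{\mathrm{rtb}}(T_1{>}T_2;p)-\tfrac12$ as a reduced rational function $N_{T_1,T_2}(p)/D_{T_1,T_2}(p)\in\mathbb{Q}(p)$ using the Hadamard--GF calculus of Section~\ref{sec:hadamard}: form the single-pattern series $A_{T_i}(x;p)$ and the survival series $B_{T_i}(x;p)$, compute the Hadamard products $A_{T_1}\odot B_{T_2}$ and $A_{T_1}\odot A_{T_2}$ (rational in $x$ with coefficients in $\mathbb{Q}(p)$), and evaluate at $x\uparrow 1$ in the Abelian sense (Lemma~\ref{lem:abel-hadamard}). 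By Proposition~\ref{prop:structure-g} the degrees are controlled, $\deg_p N_{T_1,T_2},\deg_p D_{T_1,T_2}\le S_{T_1}+S_{T_2}\le 15+15=30$, the coefficients are integers, and by the absorbing-chain representation (cf.\ the proof of Theorem~\ref{thm:rational-finite-zeros}) $D_{T_1,T_2}$ is, up to sign, a fundamental-matrix determinant and hence of constant sign on $(0,1)$. Consequently the sign of $g_{T_1,T_2}$ on $(0,1)$ equals $\pm\operatorname{sign} N_{T_1,T_2}(p)$, and everything reduces to integer polynomials of bounded degree. (Each such value can also be cross-checked independently via the joint prefix-automaton absorption probabilities.)

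Now, for each unordered triple $\{T_A,T_B,T_C\}$ containing a length-$5$ pattern, and for each of its two cyclic orientations, the ``directed $3$-cycle'' event becomes a basic open semialgebraic condition $\varepsilon_{AB}N_{A,B}(p)>0$, $\varepsilon_{BC}N_{B,C}(p)>0$, $\varepsilon_{CA}N_{C,A}(p)>0$ on $(0,1)$ with fixed signs $\varepsilon_{\bullet\bullet}$. I would isolate, with exact arithmetic (Sturm sequences over $\mathbb{Q}$, or any certified real-root isolator), all roots in $(0,1)$ of the three numerators; since each $g$ is rational it has only finitely many zeros (Theorem~\ref{thm:rational-finite-zeros}), so these roots partition $(0,1)$ into finitely many open cells on each of which all three numerators have definite signs, and I would scan the cells for the pattern $(+,+,+)$. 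Tallying over all triples and orientations, then quotienting by statistical equivalence and by $\iota$, I expect to find that exactly one equivalence class of cyclic triples has nonempty cycle region; unfolding $\iota$ yields the ``exactly two dual families'' statement. For the surviving family I would verify that its cycle region is a single interval and output $P_{\mathrm{low}}<P_{\mathrm{high}}$ as the two governing real-algebraic roots, the cycle being strict on the open interval with one pairwise win probability equal to $\tfrac12$ at each endpoint.

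The main obstacle is \emph{scale together with the certification requirement}: there are on the order of $10^4$ triple/orientation cases (built from roughly $10^3$ ordered-pair Hadamard computations), and the argument is a proof only if every arithmetic step is exact and every sign decision is certified --- no floating point. The three concrete care points are: (i) verifying positivity of each denominator $D_{T_1,T_2}$ on $(0,1)$, so that replacing $\operatorname{sign} g_{T_1,T_2}$ by $\pm\operatorname{sign} N_{T_1,T_2}$ is legitimate; (ii) making the equivalence reduction provably complete, so that ``one family up to equivalence'' is not an artifact of a missed identification; and (iii) confirming that the surviving cycle region is connected, i.e.\ a single interval rather than a union of intervals. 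None of these is deep, but each must be machine-checked with exact arithmetic, and the bookkeeping for the equivalence quotient is where a careless implementation would most plausibly go wrong.
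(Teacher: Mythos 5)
Your proposal is correct and follows essentially the same computational pipeline the paper describes in Section~\ref{sec:computations}: symmetry reduction via border/profile equivalence (Proposition~\ref{prop:border-equivalence}) plus the $\mathrm{H}\leftrightarrow\mathrm{T}$--$p\mapsto 1-p$ involution, pairwise odds as reduced integer-coefficient rational functions of $p$, exact root isolation to partition $(0,1)$ into cells, and a cell-by-cell cycle scan with certified algebraic endpoints. The only surface-level difference is that you lead with the Hadamard--GF route to $W^{\mathrm{rtb}}$ while the paper's exposition favours the joint prefix automaton for symbolic elimination, but the paper explicitly notes these two computations are equivalent, and your three ``care points'' (denominator positivity, completeness of the equivalence quotient, connectivity of the cycle region) are precisely the issues the paper's certification stage addresses.
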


\begin{theorem}[Exact counts up to length \(8\)]\label{fact:upto8-counts}
For all triples with \(L_A,L_B,L_C\le 8\), we identify exactly 16 non-transitive families up to the same notion of equivalence (including \(\mathrm{H}\leftrightarrow \mathrm{T}\) together with \(p\mapsto 1-p\)).
Detailed data for these cycles, including their valid bias intervals and exact algebraic crossover points, is provided in Appendix~\ref{app:255}.
Notably, none of these certified cycles occurs with equal lengths triples.
\end{theorem}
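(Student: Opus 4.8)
The plan is a certified exhaustive search, built on the single‑pattern and Hadamard machinery already developed. \textbf{Step 1 (single‑pattern data).} For each binary pattern $T$ with $|T|\le 8$ — there are $\sum_{k=1}^{8}2^k=510$ of them — compute the border set $\mathcal{B}(T)$ in linear time, form $h_T(x;p)$ as in Theorem~\ref{th1}, and clear $x$‑ and $p$‑denominators exactly as in the proof of Proposition~\ref{prop:structure-g} to obtain $A_T=P_T/Q_T$ with $P_T,Q_T\in\mathbb{Z}[p,x]$, $\deg_p\le S_T\le 36$, $\deg_x\le 8$. \textbf{Step 2 (pairwise win advantages).} For each ordered pair $(T_1,T_2)$ form the Hadamard products $A_{T_1}\odot B_{T_2}$ and $A_{T_1}\odot A_{T_2}$ (Hadamard of rational series is rational), take the Abelian limit $x\uparrow 1$ legitimised by Lemma~\ref{lem:abel-hadamard} and Remark~\ref{rem:abel-notation}, and assemble
\[
g_{T_1,T_2}(p)=W^{\mathrm{rtb}}(T_1\!>\!T_2;p)-\tfrac12=\frac{N_{T_1,T_2}(p)}{D_{T_1,T_2}(p)},\qquad N,D\in\mathbb{Z}[p],\ \ \deg\le S_{T_1}+S_{T_2}\le 72,
\]
the integrality and degree bounds being those of Proposition~\ref{prop:structure-g} and Corollary~\ref{cor:rootcount}. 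A random sample of pairs should be cross‑checked against a direct absorbing‑Markov‑chain (joint prefix automaton) computation solved symbolically in $p$.

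\textbf{Step 3 (sign certificates).} For every pair, take the squarefree part of $N_{T_1,T_2}$, isolate its roots in $(0,1)$ by a Sturm/Descartes procedure with exact rational refinement, and store the ordered list of root‑isolating intervals together with the sign of $g_{T_1,T_2}$ on each complementary cell; the denominator $D_{T_1,T_2}$ is strictly positive on $(0,1)$ — it is a determinant of an absorbing‑chain fundamental matrix, cf.\ Theorem~\ref{thm:rational-finite-zeros} — so only $N$ contributes sign changes. \textbf{Step 4 (triples).} Using $g_{X,Y}=-g_{Y,X}$, each unordered triple $\{T_A,T_B,T_C\}$ reduces to merging the three precomputed breakpoint lists into one common finite partition of $(0,1)$; on each cell the orientation of all three edges is constant, and a non‑transitive cycle occurs on a bias sub‑interval (with algebraic endpoints drawn from the merged breakpoints) iff some cell realises the pattern $A\!\to\!B\!\to\!C\!\to\!A$. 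Beforehand, cut the $\sim 510^{3}$ triples down by the symmetry group generated by relabelling the three players and by the simultaneous substitution $(\mathrm{H}\!\leftrightarrow\!\mathrm{T},\ p\mapsto 1-p)$, which leaves all win probabilities invariant, and prefilter to triples all three of whose pairs are ``flippable'' ($g$ changes sign in $(0,1)$), since a $3$‑cycle forces this.

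\textbf{Step 5 (bookkeeping).} Group the surviving cyclic triples into the equivalence classes of the statement and confirm there are exactly $16$; for each, record one explicit rational witness $p$ in its valid interval and the exact real‑algebraic interval endpoints for the table in Appendix~\ref{app:255}; and scan the $16$ representatives to check directly that none has $L_A=L_B=L_C$, which yields the no‑equal‑length‑cycles assertion for lengths $\le 8$ (consistent with and extending Theorem~\ref{fact:L2-no-cycles} and the $s=3$ result Proposition~\ref{prop:s3L2-no-cycle}; as a sanity check the $16$ families should contain the unique $L\le 5$ family of Theorem~\ref{fact:upto5-two}).

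The conceptual content is already in place, so I expect the main obstacle to be computational hygiene at scale rather than mathematics. The numerators $N_{T_1,T_2}$ can have degree up to $72$ with coefficients of size $\sim 2^{72}$, so everything must run in exact arbitrary‑precision arithmetic; the Hadamard step must be organised to avoid polynomial blow‑up (e.g.\ by solving the associated absorbing‑chain linear system symbolically once and reducing, or via linear‑recurrence/partial‑fraction representations); the root isolation must be fully rigorous, so that every reported inequality $g_{T_1,T_2}(p)>0$ on an interval rests on a certified sign evaluation rather than floating point; and the $\sim 510^{3}$ triple loop must genuinely be tamed by the symmetry and flippability filters to be feasible. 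A secondary subtlety is pinning down the equivalence relation precisely — full relabelling of players versus cyclic only, and the required coupling of the $\mathrm{H}\!\leftrightarrow\!\mathrm{T}$ swap with $p\mapsto 1-p$ — so that the reported count $16$ is the intended one.
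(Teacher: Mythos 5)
Your pipeline is essentially the one the paper uses (Section~\ref{sec:computations}): reduce the pattern space by symmetry, compute each pairwise advantage $g_{U,V}$ as an exact rational function of $p$, isolate its real roots in $(0,1)$, partition into cells of constant orientation, and scan cells for directed $3$-cycles. You route the pairwise odds through the Hadamard/Abelian calculus rather than the joint prefix automaton, but the paper explicitly treats these as interchangeable, and your request for a random cross-check against the automaton is exactly what the paper also recommends. You also omit the paper's waiting-time-profile (border) and reversal canonicalisation (Proposition~\ref{prop:border-equivalence}), which the paper uses to shrink the $510$ patterns before ever forming pairs, but that is purely a matter of run time, not correctness.

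There is, however, one genuine logical error in your Step~4. You prune to ``triples all three of whose pairs are flippable ($g$ changes sign in $(0,1)$), since a $3$-cycle forces this.'' That implication is false. At $p=\tfrac12$ the tournament is transitive (Theorem~\ref{thm:fair-total-order}, Corollary~\ref{cor:order-by-expectation-compact}), and a short case analysis of the eight orientations of a $3$-vertex tournament shows that a cyclic orientation at some $p^\star$ differs from a transitive one at $\tfrac12$ in exactly \emph{one} or \emph{two} edges, never all three (flipping all three edges of a transitive tournament simply reverses the linear order and is again transitive). So a valid cycle can exist with one or even two of the three $g$'s never changing sign in $(0,1)$; your prefilter would silently discard such triples, and the ``exactly~16'' claim would not be certified. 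The paper's corresponding optimisation is careful to avoid this trap: pairs without crossovers are merely evaluated once and their fixed orientation cached, not used to exclude triples (see the ``fixed-orientation pairs'' item in the paper's pruning list). The sound version of your filter is ``at least one of the three pairs must have a crossover in $(0,1)$,'' which saves much less work. Once that filter is corrected (or dropped), the remainder of your plan---exact root isolation, cell-by-cell orientation, cycle detection, grouping by the $(\mathrm{H}\leftrightarrow\mathrm{T},\,p\mapsto1-p)$ symmetry, and recording algebraic endpoints---matches the paper's certified computation and would establish the theorem.
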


\subsection*{Overview of the computational pipeline}

Our computations proceed in four stages: (i) reduce the pattern space by symmetries; (ii) compute \emph{pairwise} head–to–head odds symbolically as rational functions of the bias; (iii) extract and certify all pairwise \emph{crossover points}, thus partitioning \((0,1)\) into finitely many bias intervals on which every head–to–head comparison is constant; (iv) on each such interval, orient the tournament and detect cycles.

\paragraph{Symmetry reduction (canonical representatives)}
Let \(\mathcal{W}_L:=\{\mathrm{H},\mathrm{T}\}^{\le L}\setminus\{\epsilon\}\) be all nonempty binary words of length at most \(L\).
We reduce \(\mathcal{W}_L\) in two steps that preserve the \emph{single‑pattern} first‑hit law for every fixed bias \(p\in(0,1)\):
\begin{itemize}\itemsep2pt
\item \emph{Waiting-time profile equivalence.} By Proposition~\ref{prop:border-equivalence}, words with the same border lengths and the same letter counts at each border length have identical \(h_T(x;p)\), hence identical \(c_T(x;p)\) and the same waiting‑time distribution for all \(p\). We keep one representative from each such class.
\item \emph{Reversal.} Reversal \(T\mapsto T^{\mathrm{rev}}\) preserves the same border/profile data and hence the same waiting‑time law; we identify \(T\) with \(T^{\mathrm{rev}}\) and choose the lexicographically smaller of the two as canonical.
\end{itemize}
Complement \(\bar T\) is \emph{not} a symmetry at a fixed \(p\) (it corresponds instead to reflecting the bias: \((T,p)\mapsto(\bar T,1-p)\)). We use this mirror symmetry only when reporting results across \((0,1)\) to avoid listing duplicates above and below \(p=\tfrac12\).
This yields a set \(\mathcal{C}_L\subset\mathcal{W}_L\) of canonical representatives.

\paragraph{Pairwise odds as rational functions}
For each ordered pair of targets \((U,V)\), we first replace \(U\) and \(V\) \emph{individually} by their canonical representatives in \(\mathcal{C}_L\) (as defined above). We then construct the joint prefix automaton (finite absorbing Markov chain) with \(O(|U|\,|V|)\) transient states; transitions have probabilities polynomial in \(p\).
The absorption probabilities into the three absorbing states \(\{\mathbf{W}_U,\mathbf{W}_V,\mathbf{D}\}\) are entries of

\[
\mathbf{\Pi}(p)\;=\;(I-\mathbf{Q}(p))^{-1}\mathbf{R}(p),
\]
hence are rational functions in \(p\).  With random tie‑break,
\[
W_{U>V}(p)\;:=\;\Pr(\text{$U$ beats $V$})\;=\;\Pi_{\mathbf{W}_U}(p)+\tfrac12\,\Pi_{\mathbf{D}}(p)
\] is a rational function in \(p\) with numerator/denominator in \(\mathbb{Z}[p]\).
(Equivalently, one can combine the single‑pattern pgf’s via the Hadamard/Abelian calculus from Section~\ref{sec:hadamard}; the Markov approach is convenient for symbolic elimination.)

\paragraph{Crossover set and interval decomposition}
For each ordered pair \((U,V)\) we find the numerator of
\[
\Delta_{U,V}(p)\;:=\;W_{U>V}(p)-\tfrac12\;=\;\frac{N_{U,V}(p)}{D_{U,V}(p)}.
\]
We compute the real roots of \(N_{U,V}\) in \((0,1)\) and isolate them by exact methods (e.g., Sturm sequences or Descartes’ rule with certified interval arithmetic).  We collect all such roots over all ordered pairs and sort them to obtain a finite partition
\[
0=p_0<p_1<\cdots<p_M< p_{M+1}=1
\]
of \((0,1)\). We call each open interval \(I_k:=(p_k,p_{k+1})\) a \emph{cell}.
On each cell \(I_k\) every \(\Delta_{U,V}\) has a constant sign (since all sign changes occur only at roots); hence the entire pairwise tournament is constant on \(I_k\).

\paragraph{Tournament orientation by midpoint sampling}
For each cell \(I_k\) pick the midpoint \(m_k=\tfrac12(p_k+p_{k+1})\) and evaluate the signs \(\mathrm{sgn}\,\Delta_{U,V}(m_k)\) exactly.  This orients the complete directed graph on \(\mathcal{C}_L\) for that cell.  Non‑transitive \emph{triples} are then detected by cycle finding (see below). Whenever we report an interval of non‑transitivity \((P_{\mathrm{low}},P_{\mathrm{high}})\), we store \(P_{\mathrm{low}},P_{\mathrm{high}}\) as the exact algebraic numbers \(p_k\) and \(p_{k'+1}\) bracketing the maximal connected block of cells \(I_k,I_{k+1},\ldots,I_{k'}\) on which the cycle persists.

\subsection*{Cycle detection and paradox checks}

For a fixed cell \(I_k\), form the directed tournament \(\mathcal{T}_k\) on vertex set \(\mathcal{C}_L\) with arc \(U\to V\) iff \(\Delta_{U,V}(m_k)>0\).  A triple \((A,B,C)\) is non‑transitive iff
\(
A\to B,\ B\to C,\ C\to A
\)
in \(\mathcal{T}_k\).  We detect such cycles in two ways:

\begin{itemize}\itemsep2pt
\item \emph{Triple scan (baseline).} Enumerate unordered triples \(\{A,B,C\}\subset\mathcal{C}_L\) and test orientation.  This is fast for \(L\le 8\).
\item \emph{Graph sweep (faster).} For each \(A\), compute its out‑neighbours \(N^+(A)\) and its in‑neighbours \(N^-(A)\).
For each \(B\in N^+(A)\), scan \(N^+(B)\cap N^-(A)\) and record \((A,B,C)\) with \(C\in N^+(B)\cap N^-(A)\).  This avoids all \(\binom{|\mathcal{C}_L|}{3}\) checks.

\end{itemize}

We also log \emph{paradoxes between mean and odds} on each cell: for a pair \((U,V)\) we flag cells with \(\mathbb{E}[\tau_U]>\mathbb{E}[\tau_V]\) but \(\Delta_{U,V}(m_k)>0\), using the exact expectation formulas from Theorem~\ref{th1}.

\subsection*{Optimisations and pruning}

Several cheap filters reduce the number of pairs/triples we ever examine:

\begin{itemize}\itemsep2pt
\item \emph{Border pre‑order (fair proxy).}
For common bias near \(p=\tfrac12\), the border polynomial order
(Section~\ref{sec:hadamard})
already predicts many orientations.

\item \emph{Fixed‑orientation pairs (no crossovers).}
If an ordered pair \((U,V)\) has no crossover point in \((0,1)\)
(equivalently, \(N_{U,V}(p)\) has no real root in \((0,1)\)),
then \(\Delta_{U,V}\) has a constant sign on every cell and the orientation
\(U\to V\) never changes; we therefore evaluate such pairs once and omit them
from further per‑cell updates.
In particular, if one word is a contiguous substring of the other, say \(U\subseteq V\),
then \(\tau_U\le \tau_V\) certainly (i.e.\ for every realisation of a single source), hence \(\tau_U\) is
stochastically no larger than \(\tau_V\), so under random tie‑break
\(W_{U>V}(p)\ge \tfrac12\) for all \(p\).

\item \emph{Symmetry pruning.}
We never test both \((U,V)\) and \((\bar U,\bar V)\); likewise, reversal does not change
win odds, so we canonicalise pairs.

\item \emph{Dominated vertices.}
If a word \(U\) loses to (or ties with) every word of a fixed-length set on a cell,
it cannot appear in any \(3\)-cycle on that set on that cell.

\item \emph{Sparse recomputation.}
When advancing from \(I_k\) to \(I_{k+1}\), only those pairs whose polynomials had
a root at \(p_{k+1}\) can flip; we update orientations incrementally.
\end{itemize}

\subsection*{Certification details and data recorded}

For each ordered pair \((U,V)\) we store:
\begin{itemize}\itemsep2pt
\item the rational function \(W_{U>V}(p)=N_{U,V}(p)/D_{U,V}(p)\) in reduced form;
\item the ordered list of certified algebraic crossover points in \((0,1)\),
with isolating intervals;
\item the sign vector \(\bigl(\mathrm{sgn}\,\Delta_{U,V}(m_k)\bigr)_{k=0}^{M}\),
equivalently the constant sign of \(\Delta_{U,V}\) on each cell \(I_k=(p_k,p_{k+1})\).
\end{itemize}
For each non‑transitive triple we store the \emph{maximal} union of cells on which the cycle persists
and hence the exact interval(s) \((P_{\mathrm{low}},P_{\mathrm{high}})\) with algebraic endpoints.

\subsection*{Remarks on the unequal‑bias case (Theorem~\ref{fact:L2-no-cycles})}

For length \(2\) we symbolically parametrise the joint prefix chain for each head‑to‑head comparison using the individual biases \(p_A,p_B,p_C\).  The absence of \(3\)-cycles reduces to the impossibility of simultaneously satisfying three strict polynomial inequalities in \((p_A,p_B,p_C)\).  We certify emptiness by exact sign analysis on a cylindrical partition induced by the pairwise crossover surfaces (the natural \(3\)D analogue of the \(1\)D partition above).  No admissible cell supports a directed \(3\)-cycle, establishing Theorem~\ref{fact:L2-no-cycles}.

\subsection*{Scope of the present runs}

Applying the pipeline above with symmetry reduction on \(\mathcal{W}_L\) for \(L\le 8\)  yields the certified findings listed at the beginning of this section (Theorems~\ref{fact:upto4-none}–\ref{fact:upto8-counts}).  In each reported case of non‑transitivity we provide the exact algebraic interval \((P_{\mathrm{low}},P_{\mathrm{high}})\) of common biases \(p\) for which the cycle holds; dual families appear under \(\mathrm{H}\leftrightarrow \mathrm{T}\).

\section{Open questions and conjectures}\label{sec:open-problems}

We collect what remains open and suggest several directions. For orientation, recall that under a \emph{fair} $s$‑die we proved a total pre‑order by stochastic dominance and, in particular, that ordering by expectation coincides with dominance (Theorem~\ref{thm:fair-total-order}, Corollary~\ref{cor:order-by-expectation-compact}); this settles the earlier fairness–equivalence query in the affirmative.

\begin{openquestion}[Fairness from fixed-length compatibility]\label{oq:fairness-fixed-length}
Assume the stochastic-dominance relation \(\prec_{\mathrm{st}}\) is defined as in Section~\ref{sec:stoch-dom}.
We proved that a fair die induces a total pre-order (hence no \(\prec_{\mathrm{st}}\)-cycles) at each fixed length.
Is the converse true? That is, if for every fixed \(k\ge 2\) the relation \(\prec_{\mathrm{st}}\) has no directed cycle on \(\mathcal{A}^k\), must the die be fair?
\end{openquestion}

\begin{openquestion}[Does every non-uniform die admit a non-transitive $3$-cycle?]
\label{oq:paradox-iff-biased}

Fix an $s$-sided i.i.d.\ source with probability vector $(p_1,\dots,p_s)$ that is not uniform (i.e.\ not all $p_i=1/s$).
Must there exist finite strings $A,B,C$ over the alphabet $\{1,\dots,s\}$ such that, for
\emph{independent} copies of this source and \emph{random tie-break},
\[
\Pr(A\!>\!B)>\tfrac12,\qquad \Pr(B\!>\!C)>\tfrac12,\qquad \Pr(C\!>\!A)>\tfrac12\ ?
\]
Equivalently: is exact fairness the only obstruction to the existence of non-transitive head-to-head races under independent sources?
\end{openquestion}

\medskip
\noindent\textbf{Coin-specific questions.}
The next questions concern the binary alphabet (independent Bernoulli$(p)$ sources), with $p=\Pr(\mathrm{H})$.

\begin{openquestion}[Minimal maximum length for common-bias cycles]\label{oq:min-profiles}
Fix a common bias $p\in(0,1)\setminus\{\tfrac12\}$ and use random tie-break.
Let $L_{\max}(p)$ be the smallest integer $L$ such that there exist patterns $A,B,C$ with
$\max(|A|,|B|,|C|)\le L$ that form a non-transitive $3$-cycle under this common bias $p$.
Does $L_{\max}(p)$ exist for every $p\ne\tfrac12$?
Determine $L_{\max}(p)$ (or give meaningful bounds) and describe the length profiles that achieve it.
We exhibited cycles of profiles $(2,5,5)$ and $(3,6,6)$, and we classified all common-bias cycles with $\max(|A|,|B|,|C|)\le 8$.
\end{openquestion}

\begin{openquestion}[Monotonicity and shape of $W(p)$]\label{oq:monotonicity}
For two fixed patterns $T_1,T_2$ and random tie‑break, set $W(p)=\Pr(T_1\text{ beats }T_2)$ for identical independent coins with $\Pr(\mathrm{H})=p$.  We showed non‑monotonicity can occur even for equal lengths.
\begin{enumerate}\itemsep3pt
\item[(i)] \emph{Characterise monotone pairs:} Give necessary and sufficient conditions on $(T_1,T_2)$ for which $W(p)$ is monotone on $(0,1)$.
\item[(ii)] \emph{Unimodality:} Can $W(p)$ have more than one local extremum?  Is unimodality typical for equal lengths?
\item[(iii)] \emph{Upper/lower envelopes:} For fixed lengths $(L_1,L_2)$, what are $\sup_{T_1,T_2} W(p)$ and $\inf_{T_1,T_2} W(p)$ as functions of $p$?
\end{enumerate}
Since $W(p)$ is a rational function of $p$ (via the absorbing‑chain/Hadamard formulas), item (ii) reduces to bounding the number of real critical points in $(0,1)$ in terms of $(L_1,L_2)$.
\end{openquestion}

\begin{openquestion}[Multiple winner flips for a fixed pair]\label{oq:winner-map}
Fix two patterns $A,B$ and use \emph{random tie-break} with identical independent coins $\Pr(\mathrm{H})=p$.
Can the advantage function
\[
g_{A,B}(p):=\Pr(A\!>\!B)-\tfrac12
\]
have \emph{two or more distinct zeros} in $(0,1)$ (equivalently, can the winner $A$ vs.\ $B$ flip more than once as $p$ varies)?
If so, what is the smallest length pair $(|A|,|B|)$ for which this happens?

More generally, for fixed lengths $(L_1,L_2)$, what is the largest possible number of winner flips
(i.e.\ the largest possible number of sign changes of $g_{A,B}(p)$ on $(0,1)$)
over all pairs with $|A|=L_1$, $|B|=L_2$?
\end{openquestion}

\begin{openquestion}[Reversal amplitude]\label{oq:reversal-amplitude}
Fix a tie convention.  Among all pairs with $\mathbb{E}[\tau_U]\ge \mathbb{E}[\tau_V]$, how large can the head‑to‑head advantage of $U$ over $V$ be?  Define the \emph{reversal gap}
\[
\Delta_{L_1,L_2}(p)\;:=\;\sup_{|U|=L_1,\,|V|=L_2,\ \mathbb{E}[\tau_U]\ge \mathbb{E}[\tau_V]}\Bigl(\Pr(U\!>\!V)-\tfrac12\Bigr).
\]
Determine (or bound) $\Delta_{L_1,L_2}(p)$ and its maximisers; relate them to border polynomials.
In particular, for fixed $(L_1,L_2)$ and $p$, can $\Delta_{L_1,L_2}(p)$ be bounded away from $1/2$, or can it approach $1/2$ (i.e.\ can a ``slower'' string win with probability arbitrarily close to $1$)?
\end{openquestion}

\begin{openquestion}[Stability near fairness at fixed length]\label{oq:stability}
Fix an alphabet size $s$ and a length $k\ge 2$, and consider i.i.d.\ sources with probabilities $(p_1,\dots,p_s)$.
(Here the \emph{uniform} distribution means $p_i=1/s$ for all $i$.)
For patterns of length $k$, orient the complete directed graph by
$A\to B$ iff $\Pr(A\!>\!B)>\tfrac12$ under \emph{independent} sources and \emph{random tie-break}.
Does there exist $\varepsilon=\varepsilon(k,s)>0$ such that whenever $\max_i |p_i-1/s|<\varepsilon$,
this head-to-head tournament on $\mathcal{A}^k$ has \emph{no directed $3$-cycles} (equivalently, is transitive as a tournament)?
More generally, what is the largest neighbourhood of the uniform distribution on which the length-$k$ head-to-head relation is cycle-free?
\end{openquestion}

\begin{openquestion}[Degree/height and algorithmic complexity of crossover equations]\label{oq:threshold-degree}
For fixed patterns $(T_1,T_2)$ with a common coin bias $p$, the crossover equation $\Pr(T_1\!>\!T_2)=\tfrac12$
reduces to a polynomial identity after clearing denominators (equivalently, to $N_{T_1,T_2}(p)=0$ for an explicit integer polynomial).
Give general bounds (in terms of $|T_1|,|T_2|$) on:
\begin{enumerate}\itemsep3pt
\item[(i)] the degree of a reduced numerator polynomial $N_{T_1,T_2}$ (and how it relates to border data such as $S_{T_1}+S_{T_2}$),
\item[(ii)] the size of coefficients of $N_{T_1,T_2}$ (e.g.\ bit-length or $\ell_1$-norm bounds),
\item[(iii)] the bit‑complexity of isolating all real roots in $(0,1)$ (uniformly over all pairs of given lengths).
\end{enumerate}
Extend these questions to an $s$‑die with a multi‑parameter bias vector $(p_1,\dots,p_s)$: what can be said about the algebraic and computational complexity of the crossover hypersurface $\Pr(T_1\!>\!T_2)=\tfrac12$ inside the simplex?
\end{openquestion}

\begin{openquestion}[Multi‑player tournaments]\label{oq:k-cycles}
For $m\ge 3$ players with independent sources (common or individual biases), study the directed tournament on a family $\mathcal{F}\subseteq\{\mathrm{H},\mathrm{T}\}^\ast$ with arc $A\to B$ iff $\Pr(A\!>\!B)>\tfrac12$.  
\begin{enumerate}\itemsep3pt
\item[(i)] For fixed $p$, which tournaments on $\mathcal{F}$ are realisable?
\item[(ii)] What is the longest guaranteed cycle length as a function of $|\mathcal{F}|$?
\item[(iii)] What is the typical structure if $\mathcal{F}$ is a random subset of words of a fixed length?
\end{enumerate}
\end{openquestion}

\begin{openquestion}[Beyond i.i.d.\ sources]\label{oq:beyond-iid}
Extend the Hadamard/pgf calculus and the fairness/bias picture to
(i) two‑state Markov sources (possibly different across players), 
(ii) $s$‑state ergodic chains,
(iii) renewal processes with arbitrary inter‑arrival laws.
Under what hypotheses does a fairness‑style total pre‑order persist?  When do reversals or cycles inevitably appear?
\end{openquestion}

\begin{openquestion}[Typical behaviour for random patterns]\label{oq:random-patterns}
Fix an alphabet size $s$ and a full‑support probability vector $(p_1,\dots,p_s)$ for the i.i.d.\ source
(in the coin case $s=2$ this is $(p,1-p)$, with $p$ possibly equal to $1/2$).
Let $T=T_1\cdots T_L$ be a \emph{uniformly random word} of length $L$, meaning that $T$ is chosen uniformly from the $s^L$ words in $\{1,\dots,s\}^L$, independently of the source.

\begin{enumerate}\itemsep3pt
\item[(i)] What is the limiting distribution of the \emph{normalised} mean waiting time
\[
Z_L(T)\;:=\;\Pr(T)\,\mathbb{E}[\tau_T]
\qquad\text{where}\qquad
\Pr(T)=\prod_{j=1}^L p_{T_j}\,?
\]
(Equivalently, $Z_L(T)=\sum_{\ell\in\mathcal{B}(T)} \Pr(T)/\Pr(T[1..\ell])$, so $Z_L(T)=1$ exactly when $T$ has no proper border.)
\item[(ii)] For two independent random words $T_1,T_2$ of lengths $L_1,L_2\to\infty$ (for instance with $L_1-L_2$ fixed, or with $L_1/L_2\to\lambda$), what is the limiting law (or typical value) of $\Pr(T_1\!>\!T_2)$ under random tie‑break?
\item[(iii)] In the coin case ($s=2$), what proportion of pairs $(T_1,T_2)$ (at fixed lengths) have the property that the map $p\mapsto \Pr(T_1\!>\!T_2)$ is monotone on $(0,1)$ (cf.\ Open Question~\ref{oq:monotonicity})?  How does this proportion behave as the lengths grow?
\end{enumerate}
Heuristically, proper borders are rare for large random words, so one expects $Z_L(T)$ typically close to $1$, and $\Pr(T_1\!>\!T_2)$ typically close to $1/2$ unless the length/profile creates a systematic advantage; make such statements precise (including fluctuation scales and tail bounds).
\end{openquestion}

\bibliographystyle{IEEEtran}

\bibliography{ref}  %

\appendix

\section*{Derivations for Hadamard–GF examples}
\label{appendix:hadamard-examples}

We work out in full detail the race between \(T_1=\mathrm{HH}\) and \(T_2=\mathrm{HT}\) when each player tosses an \emph{independent} fair coin at each round. We will compute:
\[
\Pr(\mathrm{HH}<\mathrm{HT}),\quad \Pr(\mathrm{HT}<\mathrm{HH}),\quad \Pr(\mathrm{HH}=\mathrm{HT}),
\]
and then the random tie-break odds.

\subsection{Generating functions for first-hit times}

Let \(A_T(x)=c_T(x)=\sum_{n\ge 1} a_{T,n} x^n\) be the pgf of the first occurrence time of string \(T\) (as in Theorem~\ref{th1}). For a fair coin, the prefix-suffix lengths give (Theorem~\ref{th1}, Cor.~\ref{col01}).

\paragraph{HT}
The string \(\mathrm{HT}\) has prefix-suffix lengths \(\{2\}\), hence
\[
A_{\mathrm{HT}}(x)\;=\;\frac{1}{1+(1-x)\left(\frac{2}{x}\right)^{\!2}}
\;=\;\frac{x^2}{x^2-4x+4}\;=\;\frac{x^2}{(x-2)^2}.
\]
Expanding about \(x=0\) gives
\[
A_{\mathrm{HT}}(x)\;=\;\sum_{n\ge 2}\frac{n-1}{2^n}\,x^n\quad\Longrightarrow\quad
a_{\mathrm{HT},n}=\frac{n-1}{2^n},\;\; n\ge 2.
\]
Thus the tail \(S_{\mathrm{HT}}(n):=\Pr(\mathrm{HT}\text{ not yet by time }n)=1-\sum_{t=1}^n a_{\mathrm{HT},t}\) satisfies
\[
\sum_{t=2}^{n} \frac{t-1}{2^t}
=1-\frac{n}{2^{\,n-1}}+\frac{n-1}{2^{\,n}}
\quad\Rightarrow\quad
S_{\mathrm{HT}}(n)=\frac{n+1}{2^{\,n}}\ \ (n\ge 1).
\]

\paragraph{HH}
The string \(\mathrm{HH}\) has prefix-suffix lengths \(\{1,2\}\), hence
\[
A_{\mathrm{HH}}(x)\;=\;\frac{1}{1+(1-x)\!\left(\frac{2}{x}+\frac{4}{x^2}\right)}
\;=\;\frac{x^2}{4-2x-x^2}.
\]
It is convenient to note the Fibonacci identity
\[
A_{\mathrm{HH}}(x)\;=\;\sum_{n\ge 2}\frac{F_{n-1}}{2^n}\,x^n,
\]
which follows from the classical generating function
\(\sum_{m\ge 1} F_m z^m = \frac{z}{1-z-z^2}\) upon substituting \(z=x/2\).

\subsection{Hadamard computation of head-to-head odds}

For independent sources, define
\[
B_T(x)=\sum_{n\ge 1} S_T(n-1)x^n=\frac{x}{1-x}\bigl(1-A_T(x)\bigr),
\qquad
\tilde B_T(x)=\sum_{n\ge 1} S_T(n)\,x^n=\frac{x-A_T(x)}{1-x}.
\]
Then the series whose \(n\)-th coefficient is
\(\Pr(\mathrm{HH}\ \text{wins \emph{at} time }n)\) under tie-favoured-for-\(\mathrm{HH}\) is
\[
W_{\mathrm{HH|HT}}(x)\;=\;A_{\mathrm{HH}}(x)\ \odot\ B_{\mathrm{HT}}(x),
\]
and the total tie-favoured win probability is obtained by Abelian evaluation,
\[
\Pr(\mathrm{HH}\le \mathrm{HT})=\lim_{x\uparrow 1} W_{\mathrm{HH|HT}}(x).
\]

Using the explicit coefficients, we obtain directly
\[
\Pr(\mathrm{HH}<\mathrm{HT})
=\sum_{n\ge 1} a_{\mathrm{HH},n}\,S_{\mathrm{HT}}(n)
=\sum_{n\ge 2}\frac{F_{n-1}}{2^n}\cdot\frac{n+1}{2^{\,n}}
=\sum_{n\ge 2} \frac{F_{n-1}(n+1)}{4^n}.
\]
Let \(m=n-1\) and \(z=\tfrac14\). With \(G(z)=\sum_{m\ge 1} F_m z^m=\frac{z}{1-z-z^2}\),
\[
\sum_{n\ge 2} \frac{F_{n-1}(n+1)}{4^n}
=\frac{1}{4}\sum_{m\ge 1} F_m (m+2)\,z^{m}
=\frac{1}{4}\Big( z G'(z) + 2 G(z)\Big)\Big|_{z=\frac14}.
\]
A short calculation yields
\[
\Pr(\mathrm{HH}<\mathrm{HT})=\frac{39}{121},\qquad
\Pr(\mathrm{HH}=\mathrm{HT})=\sum_{n\ge 2}\frac{F_{n-1}}{2^n}\cdot\frac{n-1}{2^n}
=\frac{1}{4}\, z G'(z)\Big|_{z=\frac14}=\frac{17}{121}.
\]
In particular,
\[
\Pr(\mathrm{HH}\le \mathrm{HT})
=\Pr(\mathrm{HH}<\mathrm{HT})+\Pr(\mathrm{HH}=\mathrm{HT})
=\frac{56}{121}.
\]
Consequently,
\[
\Pr(\mathrm{HT}<\mathrm{HH})=1-\frac{39}{121}-\frac{17}{121}=\frac{65}{121}.
\]
With \textit{random tie-break}, the \(\mathrm{HH}\) win probability is
\[
\Pr(\mathrm{HH}\ \text{wins})
=\frac{39}{121}+\frac12\cdot\frac{17}{121}
=\frac{95}{242}\approx 0.39256,
\]
and \(\Pr(\mathrm{HT}\ \text{wins})=\frac{147}{242}\approx 0.60744.
\)

\subsection{Markov-chain derivation with 4 transient states}
\noindent\emph{Methodological note.} We follow the standard DFA/Markov embedding of pattern waiting times \cite{GerberLi1981,FuKoutras1994,Nuel2008}.
We now confirm the same numbers by a 4-state transient \emph{prefix state machine} (each player’s automaton has two transient progress states: \(\mathbf{T}\) “no prefix matched” and \(\mathbf{H}\) “last toss was \(\mathrm{H}\)”). There are three absorbing states: \(\mathbf{W}_1\) (\(\mathrm{HH}\) wins), \(\mathbf{W}_2\) (\(\mathrm{HT}\) wins), and \(\mathbf{D}\) (simultaneous hit at that round).

\paragraph{States}
Transient states are ordered pairs \((i,j)\in\{\mathbf{T},\mathbf{H}\}\times\{\mathbf{T},\mathbf{H}\}\):
\((\mathbf{T},\mathbf{T}), (\mathbf{H},\mathbf{T}), (\mathbf{T},\mathbf{H}), (\mathbf{H},\mathbf{H})\).
Absorbing states: \(\mathbf{W}_1,\mathbf{W}_2,\mathbf{D}\).

\paragraph{Transitions (fair coins, independent)}
At each round, P1 tosses \(\mathrm{H}/\mathrm{T}\) and P2 tosses \(\mathrm{H}/\mathrm{T}\), independently and with probability \(1/2\) each.

- P1 (\(\mathrm{HH}\)) absorbs this round iff its current component is \(\mathrm{H}\) and it tosses \(\mathrm{H}\).

- P2 (\(\mathrm{HT}\)) absorbs this round iff its current component is \(\mathrm{H}\) and it tosses \(\mathrm{T}\).

- Otherwise, the components update by their standard prefix automata:
  \(\mathrm{T} \xrightarrow{\mathrm{H}} \mathrm{H},\ \mathrm{T} \xrightarrow{\mathrm{T}} \mathrm{T},\ \mathrm{H} \xrightarrow{\mathrm{T}} \mathrm{T}\) for both; and \(\mathrm{H} \xrightarrow{\mathrm{H}}\) \emph{absorbs} for P1, while \(\mathrm{H} \xrightarrow{\mathrm{H}} \mathrm{H}\) (stays \(\mathrm{H}\)) for P2

\paragraph{Transition diagram}

\begin{center}
\scalebox{1.3}{ 
\begin{tikzpicture}[
  >=Latex,
  node distance= 22mm and 26mm,
  on grid,
  auto=false,
  every node/.style={font=\small},
  trans/.style={circle,draw,minimum size=7mm,inner sep=0.5pt},
  abs/.style={rectangle,draw,rounded corners,minimum width=10mm,minimum height=5mm,inner sep=2pt},
  semithick,shorten >=1pt,
  every edge quotes/.style = {inner sep=0.7pt, auto,rounded corners,pos=0.35}
]
\node[trans] (TT) at (6,6) {(T,T)};
\node[trans] (HT) at (4,4.5) {(H,T)};
\node[trans] (TH) at (1,6) {(T,H)};
\node[trans] (HH) at (4,0) {(H,H)};

\node[abs] (W1) at (4,2.5) {$\mathbf{W}_1$};
\node[abs] (W2) at (0.5,3) {$\mathbf{W}_2$};
\node[abs] (DD) at (2,0) {$\mathbf{D}$};

\path[->]
(TT) edge[bend right=8,"\tiny{(H,T)}"'] (HT)
(TT) edge[bend right=4,"\tiny{(T,H)}"'] (TH)
(TT) edge[bend left=12,"\tiny{(H,H)}"] (HH)  
(TT) edge[loop right,"\tiny{(T,T)}"] (TT);

\path[->]
(HT) edge[bend right=12,"\tiny{(H,T)}"'] (W1)
(HT) edge["\tiny{(T,H)}"](TH)
(HT) edge[bend left=12,"\tiny{(H,H)}"] (W1)
(HT) edge[bend right=8,"\tiny{(T,T)}"'] (TT);

\path[->]
(TH) edge["\tiny{(H,T)}"'] (W2)
(TH) edge[bend left=4,"\tiny{(H,H)}"] (HH)
(TH) edge[loop left,"\tiny{(T,H)}"] (TH)
(TH) edge[bend right=4,"\tiny{(T,T)}"'] (TT);

\path[->]
(HH) edge["\tiny{(H,H)}"'] (W1)
(HH) edge["\tiny{(H,T)}"'] (DD)
(HH) edge[bend left=4,"\tiny{(T,H)}"] (TH)
(HH) edge["\tiny{(T,T)}"] (W2);

\path[->]
(7,4.4) edge (TT);
\end{tikzpicture}
} 
\end{center}

\paragraph{Absorption probabilities}
Let the absorbing probabilities from a transient state \(s\) be \(\alpha_1(s)=\Pr(\mathbf{W}_1\mid s)\), \(\alpha_2(s)=\Pr(\mathbf{W}_2\mid s)\), \(\delta(s)=\Pr(\mathbf{D}\mid s)\).
Solve the linear system \(\boldsymbol{\alpha} = \mathbf{R} + \mathbf{Q}\boldsymbol{\alpha}\) (standard absorbing Markov chain), with initial state \(s_0=(\mathbf{T},\mathbf{T})\) (i.e.\ the state labelled \((T,T)\) in the diagram).
A routine elimination yields
\[
\Pr(\mathbf{W}_1\mid (\mathbf{T},\mathbf{T}))=\frac{39}{121},\quad
\Pr(\mathbf{W}_2\mid (\mathbf{T},\mathbf{T}))=\frac{65}{121},\quad
\Pr(\mathbf{D}\mid (\mathbf{T},\mathbf{T}))=\frac{17}{121},
\]
exactly matching the Hadamard/Abelian-limit values above.

\paragraph{Random tie-break}
Randomly resolving \(\mathbf{D}\) gives
\[
\Pr(\mathrm{HH}\ \text{wins})=\frac{39}{121}+\frac{1}{2}\cdot\frac{17}{121}=\frac{95}{242},
\qquad
\Pr(\mathrm{HT}\ \text{wins})=\frac{147}{242}.
\]

\subsection{Exact threshold for the length-$2$ reversal (HH vs HT, random tie-break)}
\label{appendix:len2-threshold}

We compute the exact threshold \(p_{\ast}\) where the head-to-head advantage between \( \mathrm{HH}\) and \( \mathrm{HT}\) (with identical, independent bias \(\Pr(\mathrm{H})=p\)) flips under \emph{random tie-break}. Write
\[
W_{\mathrm{rtb}}(p)\;:=\;\Pr(\mathrm{HH}<\mathrm{HT})+\tfrac{1}{2}\Pr(\mathrm{HH}=\mathrm{HT}).
\]
A direct solution of the 4-state absorbing prefix state-machine (or, equivalently, the Hadamard–GF method in Section~\ref{appendix:hadamard-examples}) yields the exact rational form
\[
\Pr(\mathrm{HH}<\mathrm{HT})
=\frac{p\big(p^6-3p^5+3p^4-2p^3+3p^2-2p+1\big)}
{\,p^7-4p^6+6p^5-5p^4+5p^3-4p^2+p+1\,},
\]
\[
\Pr(\mathrm{HH}=\mathrm{HT})
=\frac{p^2\big(-p^5+3p^4-3p^3+p^2-p+1\big)}
{\,p^7-4p^6+6p^5-5p^4+5p^3-4p^2+p+1\,},
\]
and hence
\[
W_{\mathrm{rtb}}(p)
=\frac{p\big(p^6-3p^5+3p^4-3p^3+5p^2-3p+2\big)}
{2\big(p^7-4p^6+6p^5-5p^4+5p^3-4p^2+p+1\big)}.
\]
The threshold \(p_*\) is defined by \(W_{\mathrm{rtb}}(p_*)=\tfrac12\). Clearing denominators and simplifying gives the sextic
\[
p^6-3p^5+2p^4+p^2+p-1=0.
\]
This polynomial has a unique real root in \((0,1)\),
\[
p_{*}\approx 0.586648066265160\ldots,
\]
which is the lower endpoint of the reversal interval (this is the \(p_*\) referenced in Section~\ref{sec:len2-reversal}).

\medskip\noindent\textbf{Upper endpoint \(\varphi^{-1}=(\sqrt5-1)/2\) from expectations.}
Using Theorem~\ref{th1}, the expected waiting times for length-$2$ patterns with bias \(p\) are
\[
\mathbb{E}[\tau_{\mathrm{HH}}]=\frac{1}{p}+\frac{1}{p^2},\qquad
\mathbb{E}[\tau_{\mathrm{HT}}]=\frac{1}{p(1-p)}.
\]
The inequality \(\mathbb{E}[\tau_{\mathrm{HT}}]<\mathbb{E}[\tau_{\mathrm{HH}}]\) is equivalent (for \(p\in(0,1)\)) to
\[
\frac{1}{p(1-p)}<\frac{1}{p}+\frac{1}{p^2}
\;\;\Longleftrightarrow\;\;
p<1-p^2
\;\;\Longleftrightarrow\;\;
p^2+p-1<0,
\]
i.e.
\[
p<\varphi^{-1}=\frac{\sqrt5-1}{2}\approx 0.61803.
\]
Therefore the \emph{reversal window} under random tie-break is precisely
\[
p\in\bigl(p_{\ast},\,\varphi^{-1}\bigr).
\]

\subsection*{Remarks and generalisations}
(i) For identical bias \(p\in(0,1)\), the same steps go through with
\[
A_{\mathrm{HT}}(x)=\frac{1}{1+(1-x)\frac{1}{p(1-p)x^2}},\qquad
A_{\mathrm{HH}}(x)=\frac{1}{1+(1-x)\left(\frac{1}{p\,x}+\frac{1}{p^2 x^2}\right)},
\]
and the Hadamard/Markov results coincide numerically for any \(p\). \medskip

(ii) The symmetry intuition (\(1/2\) under random tie-break) applies when the two first-hit distributions are equal (e.g.\ fair coin with \(T_1=\mathrm{HT}\) vs.\ \(T_2=\mathrm{TH}\)). Here they are \emph{not} equal: \(\mathbb{E}[\tau_{\mathrm{HT}}]=4\), \(\mathbb{E}[\tau_{\mathrm{HH}}]=6\) for a fair coin, so \(\mathrm{HT}\) has a genuine advantage in the independent-coins race.

\section{Derivations for equal-length monotonicity counterexamples}
\label{appendix:eq-length-derivations}

Throughout, let \(p=\Pr(\mathrm{H})\), \(q=1-p\). For a pattern \(T\), write
\[
A_T(x;p)=\sum_{n\ge 1} a_{T,n}(p)\,x^n
\]
for the pgf of the first occurrence time, given by Theorem~\ref{th1} from the prefix--suffix (border) lengths of \(T\).

For head-to-head (independent sources), the tie-favoured win series is
\[
W_{T_1|T_2}(x;p)\;=\;A_{T_1}(x;p)\ \odot\ B_{T_2}(x;p),
\qquad
B_T(x;p)=\frac{x}{1-x}\big(1-A_T(x;p)\big),
\]
and the corresponding tie-favoured win probability is obtained by Abelian evaluation,
\[
\Pr(T_1\le T_2)=\lim_{x\uparrow 1} W_{T_1|T_2}(x;p).
\]
The \emph{strict} case uses
\[
\tilde B_T(x;p)=\frac{x-A_T(x;p)}{1-x}=\sum_{n\ge 1}\Pr(\tau_T>n)\,x^n,
\]
so \(\Pr(\tau_{T_1}<\tau_{T_2})=\lim_{x\uparrow 1}\bigl(A_{T_1}\odot \tilde B_{T_2}\bigr)(x;p)\).

\subsection{Example A: \texorpdfstring{$T_1=\mathrm{HHT}$}{T1=HHT} versus \texorpdfstring{$T_2=\mathrm{HTH}$}{T2=\mathrm{HTH}} (length 3), random tie-break}
\subsubsection*{Hadamard--GF derivation}
Borders for \(\mathrm{HHT}\) are only \(\{3\}\), so
\[
A_{\mathrm{HHT}}(x;p)=\frac{1}{1+\dfrac{1-x}{p^2 q\,x^3}}
=\frac{p^2 q\,x^3}{\,1-x+p^2 q\,x^3\,}.
\]
Borders for \(\mathrm{HTH}\) are \(\{1,3\}\), hence
\[
A_{\mathrm{HTH}}(x;p)=\frac{1}{1+(1-x)\!\left(\frac{1}{p\,x}+\frac{1}{p^2 q\,x^3}\right)}
=\frac{p^2 q\,x^3}{\,1-x+p q\,x^2-p q^2\,x^3\,}.
\]
Define
\[
B_T(x;p)=\sum_{n\ge 1} S_T(n-1)\,x^n=\frac{x}{1-x}\bigl(1-A_T(x;p)\bigr).
\]
Then the series whose \(n\)-th coefficient is
\(\Pr(\mathrm{HHT}\ \text{wins \emph{at} time }n)\) under \emph{tie-favoured-for-\(\mathrm{HHT}\)} is
\[
W_{T_1|T_2}(x;p)=A_{\mathrm{HHT}}(x;p)\ \odot\ B_{\mathrm{HTH}}(x;p).
\]
The corresponding tie-favoured win probability is the Abelian sum
\[
\Pr(\mathrm{HHT}\le \mathrm{HTH})=\lim_{x\uparrow 1} W_{T_1|T_2}(x;p),
\]
and the tie probability is
\[
\Pr(\mathrm{HHT}=\mathrm{HTH})=\lim_{x\uparrow 1}\Bigl(A_{\mathrm{HHT}}(x;p)\odot A_{\mathrm{HTH}}(x;p)\Bigr).
\]
Therefore, under \emph{random tie-break},
\begin{align*}
\Pr(\mathrm{HHT}\ \text{wins, rtb})
&=\Pr(\mathrm{HHT}\le \mathrm{HTH})-\tfrac12\,\Pr(\mathrm{HHT}=\mathrm{HTH}) \\
&=\lim_{x\uparrow 1} W_{T_1|T_2}(x;p)\;-\;\tfrac12\lim_{x\uparrow 1}\Bigl(A_{\mathrm{HHT}}\odot A_{\mathrm{HTH}}\Bigr)(x;p).
\end{align*}
(For each fixed \(p\in(0,1)\), these Abelian limits agree with direct substitution \(x=1\), since the relevant coefficients decay exponentially.)

\medskip
\emph{Coefficient recurrences (useful for exact summation).}
From
\((1-x+p^2 q\,x^3)A_{\mathrm{HHT}}(x;p)=p^2 q\,x^3\) we get, for \(n\ge 1\),
\[
a^{\mathrm{HHT}}_n-a^{\mathrm{HHT}}_{n-1}+p^2 q\,a^{\mathrm{HHT}}_{n-3}
=\begin{cases}p^2 q,&n=3\\[2pt]0,&n\neq 3\end{cases},
\quad a^{\mathrm{HHT}}_1=a^{\mathrm{HHT}}_2=0.
\]
From \((1-x+p q\,x^2-p q^2\,x^3)A_{\mathrm{HTH}}(x;p)=p^2 q\,x^3\) we get
\[
a^{\mathrm{HTH}}_n-a^{\mathrm{HTH}}_{n-1}+p q\,a^{\mathrm{HTH}}_{n-2}-p q^2\,a^{\mathrm{HTH}}_{n-3}
=\begin{cases}p^2 q,&n=3\\[2pt]0,&n\neq 3\end{cases},
\quad a^{\mathrm{HTH}}_1=a^{\mathrm{HTH}}_2=0.
\]
Then
\[
\Pr(\mathrm{HHT}<\mathrm{HTH})
=\sum_{n\ge 1} a^{\mathrm{HHT}}_n\,\Bigl(1-\sum_{t=1}^{n} a^{\mathrm{HTH}}_t\Bigr),
\qquad
\Pr(\mathrm{HHT}=\mathrm{HTH})=\sum_{n\ge 1} a^{\mathrm{HHT}}_n\,a^{\mathrm{HTH}}_n,
\]
which converge rapidly (exponentially) for each fixed \(p\in(0,1)\) because all poles lie outside the unit disc.
Summing via the recurrences yields, for example,
\[
\begin{array}{c|ccc}
p & 0.40 & 0.50 & 0.60\\\hline
\Pr(\mathrm{HHT}\ \text{wins, rtb})
& 0.5547588016 & 0.5564733557 & 0.5539977106
\end{array}
\]
as reported in the main text.

\subsubsection*{Markov-chain derivation (prefix state machine, 9 transient states)}
\noindent\emph{Methodological note.} This is the usual finite‑Markov‑chain (pattern automaton) embedding; see \cite{GerberLi1981,FuKoutras1994,Nuel2008}. For \(\mathrm{HHT}\) use states \(\{0,1,2\}\) meaning the longest matched prefix length; updates are
\[
\begin{array}{c|cc}
\text{state} & \mathrm{H} & \mathrm{T}\\\hline
0 & 1 & 0\\
1 & 2 & 0\\
2 & 2 & \text{hit (to } \mathbf{W}_1)
\end{array}
\qquad
\text{(pattern HHT).}
\]
For \(\mathrm{HTH}\) the updates are
\[
\begin{array}{c|cc}
\text{state} & \mathrm{H} & \mathrm{T}\\\hline
0 & 1 & 0\\
1 & 1 & 2\\
2 & \text{hit (to } \mathbf{W}_2) & 0
\end{array}
\qquad
\text{(pattern HTH).}
\]
The joint chain has transient states \((i,j)\in\{0,1,2\}\times\{0,1,2\}\) (9 in total) and absorbing states
\(\mathbf{W}_1\) (HHT first), \(\mathbf{W}_2\) (HTH first), and \(\mathbf{D}\) (simultaneous).
Let \(f_{ij}=\Pr(\mathbf{W}_1\mid (i,j))\), \(d_{ij}=\Pr(\mathbf{D}\mid (i,j))\).
With joint outcomes \((\mathrm{H},\mathrm{H}),(\mathrm{H},\mathrm{T}),(\mathrm{T},\mathrm{H}),(\mathrm{T},\mathrm{T})\) of probabilities \(p^2,pq,qp,q^2\), set
\[
f=\mathbf{R}^{(1)}+\mathbf{Q}\,f,\qquad d=\mathbf{R}^{(D)}+\mathbf{Q}\,d,
\]
where \(\mathbf{Q}\) collects transitions between transient states under each outcome, and
\(\mathbf{R}^{(1)},\mathbf{R}^{(D)}\) collect one-step absorption into \(\mathbf{W}_1\) and \(\mathbf{D}\).
Solving the \(9\times 9\) systems yields from \((0,0)\) exactly the same values as above; e.g.
\[
\begin{array}{c|ccc}
p & 0.40 & 0.50 & 0.60\\\hline
\Pr(\mathbf{W}_1) & 0.52769533 & 0.51882747 & 0.50830726\\
\Pr(\mathbf{D})   & 0.05412695 & 0.07529178 & 0.09138089
\end{array}
\quad\Rightarrow\quad
\Pr(\text{rtb win})
=\Pr(\mathbf{W}_1)+\tfrac12\Pr(\mathbf{D}),
\]
matching the Hadamard computation.

\subsection{Example B: \texorpdfstring{$T_1=\mathrm{HT}$}{T1=HT} versus \texorpdfstring{$T_2=\mathrm{TH}$}{T2=TH} (length 2), strict ties}
\subsubsection*{Hadamard--GF derivation}
Both \(\mathrm{HT}\) and \(\mathrm{TH}\) have no proper borders, so
\[
A_{\mathrm{HT}}(x;p)=A_{\mathrm{TH}}(x;p)
=\frac{1}{1+\dfrac{1-x}{p q\,x^2}}
=\frac{p q\,x^2}{\,1-x+p q\,x^2\,}.
\]
By symmetry of the first-hit distributions,
\[
\Pr(\mathrm{HT}<\mathrm{TH})=\Pr(\mathrm{TH}<\mathrm{HT})\quad\text{and}\quad
\Pr(\mathrm{HT}=\mathrm{TH})=\sum_{n\ge 1} a_{\mathrm{HT},n}(p)^2.
\]
Hence the \emph{strict} probability is
\[
\Pr(\mathrm{HT}<\mathrm{TH})=\frac{1-\Pr(\mathrm{HT}=\mathrm{TH})}{2}
=\frac{1}{2}\left(1-\lim_{x\uparrow 1}\bigl(A_{\mathrm{HT}}(x;p)\odot A_{\mathrm{HT}}(x;p)\bigr)\right).
\]
(Equivalently, \(\Pr(\mathrm{HT}=\mathrm{TH})\) is the Abelian sum of the Hadamard product; for each fixed \(p\in(0,1)\) this agrees with evaluation at \(x=1\) because the coefficients decay exponentially.)
After summation of the (rapidly decaying) series, this gives for instance
\[
\begin{array}{c|ccc}
p & 0.40 & 0.50 & 0.60\\\hline
\Pr(\mathrm{HT}<\mathrm{TH})\ \text{(strict)}
& 0.41259398496 & 0.40740740741 & 0.41259398496
\end{array}
\]
(as quoted in the main text). For random tie-break, \(\Pr(\mathrm{HT}\ \text{wins})\equiv \tfrac12\).

\subsubsection*{Markov-chain derivation (prefix state machine, 4 transient states)}
\noindent\emph{Methodological note.} This is the usual finite‑Markov‑chain (pattern automaton) embedding; see \cite{GerberLi1981,FuKoutras1994,Nuel2008}.
For length-$2$ patterns, each player’s prefix machine has states \(\{0,1\}\) (“no prefix matched”, “\(\mathrm{H}\) matched” for these two patterns). The joint machine therefore has transient states \((0,0),(1,0),(0,1),(1,1)\) and absorbing states \(\mathbf{W}_1,\mathbf{W}_2,\mathbf{D}\) as before. Writing the \(4\times 4\) linear systems for
\(f_{ij}=\Pr(\mathbf{W}_1\mid (i,j))\) and \(d_{ij}=\Pr(\mathbf{D}\mid (i,j))\) from the four joint outcomes \(p^2,pq,qp,q^2\) yields
\[
\Pr(\mathbf{W}_1\mid (0,0))=\Pr(\mathrm{HT}<\mathrm{TH}),\qquad
\Pr(\mathbf{D}\mid (0,0))=\Pr(\mathrm{HT}=\mathrm{TH}),
\]
and the same numerical values as in the Hadamard computation (strict or random tie-break, as desired).

\section{Examples for one common bias and unequal lengths}
\label{appendix:eq-bias-unequal-lengths}

In this appendix we record concrete instances of non-transitive 3-cycles under a \emph{common} head-bias \(p\in(0,1)\)
when the three players use \emph{independent} coins and patterns of \emph{unequal} lengths.
For each triple we verify:
\begin{itemize}\itemsep2pt
\item the head-to-head cycle under random tie-break:
\(\Pr(A\!>\!B)>\tfrac12\), \(\Pr(B\!>\!C)>\tfrac12\), \(\Pr(C\!>\!A)>\tfrac12\);
\item and the strict expectation order \(\mathbb{E}[\tau_{T_A}]>\mathbb{E}[\tau_{T_B}]>\mathbb{E}[\tau_{T_C}]\).
\end{itemize}
Expectations are computed from Theorem~\ref{th1} (using the border sets \(\mathcal{B}(T)\) for each pattern), and the head-to-head odds are
computed via the Hadamard product / Abelian-limit method (Section~\ref{appendix:hadamard-examples})
and cross-checked with the joint prefix state machine (as in the appendix on HH vs.\ HT).

\subsection{Reversal census for \texorpdfstring{$k\leq 8$}{k<=8}}
\label{app:reversal-census-k8}

For an ordered pair of binary targets $(T_1,T_2)$ and bias $p=\Pr(\mathrm{H})\in(0,1)$, write
\[
W^{\mathrm{rtb}}(T_1,T_2;p):=\Pr(\,T_1\ \text{beats}\ T_2\ \text{with random tie-break}\,),
\]
and $\mathbb{E}[\tau_T(p)]$ for the mean waiting time.

We say that $(T_1,T_2)$ has a \emph{paradox (reversal) window} on an open interval $(a,b)\subset(0,1)$ if $(a,b)$ is a \emph{maximal} open interval on which
\[
\mathbb{E}[\tau_{T_1}(p)]<\mathbb{E}[\tau_{T_2}(p)]
\quad\text{and}\quad
W^{\mathrm{rtb}}(T_1,T_2;p)<\tfrac12
\qquad\text{hold for all }p\in(a,b).
\]
By Theorem~\ref{thm:rational-finite-zeros}, each ordered pair has only finitely many crossover points in $(0,1)$, hence only finitely many such maximal windows.
(All endpoints are computed exactly as real algebraic numbers; the tables below show decimal approximations.)

\paragraph{Counts by length profile.}
Let $M_{i,j}$ (for $1\le i\le j\le 8$) denote the number of paradox windows in our database whose two targets have lengths $\{i,j\}$ (unordered).
For $k\le 8$ the window-count matrix is
\[
M_{\le 8}=
\begin{pmatrix}
0&2&2&2&2&2&2&2\\
0&4&12&14&16&18&20&22\\
0&0&16&58&74&92&112&134\\
0&0&0&68&260&352&464&598\\
0&0&0&0&290&1128&1588&2186\\
0&0&0&0&0&1232&4768&6946\\
0&0&0&0&0&0&5162&19914\\
0&0&0&0&0&0&0&21146
\end{pmatrix}.
\]
Summing over all $1\le i\le j\le 8$ gives a total of $66{,}708$ paradox windows.

Equivalently, the cumulative counts for maximum length $\le k$ are:
\[
\#\{\text{windows with }\max(|T_1|,|T_2|)\le k\}=
\begin{cases}
6,&k=2,\\
36,&k=3,\\
178,&k=4,\\
820,&k=5,\\
3644,&k=6,\\
15760,&k=7,\\
66708,&k=8.
\end{cases}
\]

\paragraph{The paradox set $\mathcal{R}_{\le 8}$}
Let $\mathcal{R}_{\le 8}\subset(0,1)$ be the union of all paradox windows over ordered pairs with $|T_1|,|T_2|\le 8$.
In terms of maximal disjoint open components, the computation yields
\[
\mathcal{R}_{\le 8}
=
(0,p_0)\ \cup\ (p_0,p_-)\ \cup\ (p_+,1-p_0)\ \cup\ (1-p_0,1),
\]
with (decimal) approximations
\[
p_0 \approx 0.4996747,\qquad
p_- \approx 0.4996837,\qquad
p_+ \approx 0.5003163,\qquad
1-p_0 \approx 0.5003253.
\]
Thus the central \emph{no-paradox gap} is the interval $(p_-,p_+)$, of width
$p_+-p_- \approx 6.326\times 10^{-4}$, centred at $1/2$ up to numerical symmetry.
The additional split points $p_0$ and $1-p_0$ arise because we record \emph{open} windows: endpoints are excluded (at an endpoint, at least one of the strict inequalities defining the paradox ceases to hold).

\paragraph{A structured witness cover (24 pairs) for the lower side}
Although $\mathcal{R}_{\le 8}$ is the union of $66{,}708$ windows, the entire lower-bias region $(0,p_-)$ is already \emph{covered} (up to the crossover endpoints themselves) by the following $24$ explicit witness pairs, listed exactly as in the clean text output.
As usual, reversing either target gives an equivalent witness (so each line represents an entire reversal-equivalence class).

\begin{center}
\scriptsize
\renewcommand{\arraystretch}{1.1}
\begin{tabular}{ll}
\hline
Paradox window $(a,b)$ & Witness pair $T_1$ vs.\ $T_2$\\
\hline
$(0.0000000,\ 0.1808275)$ & $\mathrm{HTTTTT}$ vs.\ $\mathrm{TTHTT}$\\
$(0.1768615,\ 0.2062995)$ & $\mathrm{TTT}$ vs.\ $\mathrm{H}$\\
$(0.2034556,\ 0.2379602)$ & $\mathrm{HTTTTTT}$ vs.\ $\mathrm{TTTTTTT}$\\
$(0.2219104,\ 0.2583240)$ & $\mathrm{HTTTTT}$ vs.\ $\mathrm{TTTTTT}$\\
$(0.2451223,\ 0.2833121)$ & $\mathrm{HTTTT}$ vs.\ $\mathrm{TTTTT}$\\
$(0.2755080,\ 0.3148571)$ & $\mathrm{HTTT}$ vs.\ $\mathrm{TTTT}$\\
$(0.3145355,\ 0.3380423)$ & $\mathrm{THTT}$ vs.\ $\mathrm{HH}$\\
$(0.3280964,\ 0.3585549)$ & $\mathrm{HTTT}$ vs.\ $\mathrm{HH}$\\
$(0.3576243,\ 0.3656031)$ & $\mathrm{TTHTTT}$ vs.\ $\mathrm{HHH}$\\
$(0.3643923,\ 0.3819660)$ & $\mathrm{HTTT}$ vs.\ $\mathrm{HHT}$\\
$(0.3819660,\ 0.4133519)$ & $\mathrm{HT}$ vs.\ $\mathrm{TT}$\\
$(0.4088477,\ 0.4450419)$ & $\mathrm{HTT}$ vs.\ $\mathrm{HH}$\\
$(0.4450419,\ 0.4580501)$ & $\mathrm{HHT}$ vs.\ $\mathrm{THT}$\\
$(0.4514940,\ 0.4722129)$ & $\mathrm{HHTT}$ vs.\ $\mathrm{HHH}$\\
$(0.4720057,\ 0.4848754)$ & $\mathrm{HTTT}$ vs.\ $\mathrm{HHH}$\\
$(0.4848754,\ 0.4877996)$ & $\mathrm{HHHT}$ vs.\ $\mathrm{THTT}$\\
$(0.4861766,\ 0.4926391)$ & $\mathrm{HHHHTT}$ vs.\ $\mathrm{HHHHH}$\\
$(0.4926085,\ 0.4961830)$ & $\mathrm{HHHTTT}$ vs.\ $\mathrm{HHHHH}$\\
$(0.4961660,\ 0.4980546)$ & $\mathrm{HTTTTT}$ vs.\ $\mathrm{HHHHH}$\\
$(0.4974482,\ 0.4987080)$ & $\mathrm{HHHTTTT}$ vs.\ $\mathrm{HHHHHH}$\\
$(0.4984502,\ 0.4992200)$ & $\mathrm{HTTTTTT}$ vs.\ $\mathrm{HHHHHH}$\\
$(0.4990314,\ 0.4995131)$ & $\mathrm{HHHTTTTT}$ vs.\ $\mathrm{HHHHHHH}$\\
$(0.4993513,\ 0.4996747)$ & $\mathrm{HTTTTTTT}$ vs.\ $\mathrm{HHHHHHH}$\\
$(0.4996747,\ 0.4996837)$ & $\mathrm{HHHHHHHT}$ vs.\ $\mathrm{THTTTTTT}$\\
\hline
\end{tabular}
\end{center}

By complement symmetry ($\mathrm{H}\leftrightarrow \mathrm{T}$) and the bias transform $p\mapsto 1-p$, the complemented pairs give the corresponding witness cover of $(p_+,1)$.
Together, these two covers (lower side plus its complement-dual) witness the full set $\mathcal{R}_{\le 8}$ outside the central no-paradox gap $(p_-,p_+)$.

\subsection{Length profile \texorpdfstring{$(2,5,5)$}{(2,5,5)} (common bias)}
\label{app:255}

We begin with the smallest unequal-length non-transitive cycle under a \emph{common} head-bias $p=\Pr(\mathrm{H})$.
In our binary notation ($0=\mathrm{H}$, $1=\mathrm{T}$), one convenient representative (with $p\ge \tfrac12$) is
\[
(A,B,C)=(11,\;00000,\;00001),
\]
which forms a genuine 3-cycle under random tie-break:
\[
W_{\mathrm{rtb}}(A,B;p)>\tfrac12,\qquad
W_{\mathrm{rtb}}(B,C;p)>\tfrac12,\qquad
W_{\mathrm{rtb}}(C,A;p)>\tfrac12
\]
for all $p$ in the open interval
\[
p\in(0.72097\ldots,\;0.72283\ldots).
\]

\medskip\noindent
\textbf{Remark (mirror window).}
By complementing all three patterns and replacing $p$ by $1-p$, one obtains the symmetric window below $\tfrac12$,
namely $p\in(0.27717\ldots,0.27903\ldots)$, with a representative $(00,\,11111,\,11110)$ (equivalently, reversing the last word gives $(00,\,11111,\,01111)$).

\medskip\noindent
\textbf{Method (exact).}
For each ordered pair $(T_1,T_2)$ we compute the exact rational function
$W_{\mathrm{rtb}}(T_1,T_2;p)\in\mathbb{Q}(p)$ by solving the joint prefix-state Markov chain
(equivalently, via the Hadamard-product method from Appendix~\ref{appendix:hadamard-examples}, with the
probability extracted as an Abelian limit $x\uparrow 1$).
The crossover endpoints are therefore real algebraic numbers.

\begin{center}
\begin{tabular}{|c|c|c|c|}
\hline
Lengths $(|A|,|B|,|C|)$ & $(A,B,C)$ & admissible $p$-window & cycle \\ \hline
$(2,5,5)$ & $(11,\;00000,\;00001)$ & $(0.72097\ldots,\;0.72283\ldots)$ & $A\to B\to C\to A$ \\ \hline
\end{tabular}
\end{center}

\noindent
The companion exact-output data include certified algebraic descriptions of the endpoints
(minimal polynomials and isolating intervals).

\subsection{Census of non-transitive cycles for \texorpdfstring{$L\le 8$ (exact; $p\ge \tfrac12$)}{L<=8 (exact; p>=1/2)}}
\label{appendix:census}

We now give the \emph{complete} exact classification of non-transitive 3-cycles with maximum pattern length
$L\le 8$ under a common bias $p=\Pr(\mathrm{H})$ and random tie-break.

\medskip\noindent
\textbf{Canonical reduction.}
To avoid redundant enumeration, we search only over one representative per waiting-time profile
(i.e.\ one representative per border-count signature class), since patterns with the same signature
have identical first-hit distributions for all $p$ and hence identical head-to-head behaviour.

\medskip\noindent
\textbf{Crossover database (unordered pairs).}
For each unordered pair $\{U,V\}$ we compute the exact crossover set
\[
\{\,p\in(0,1):\ W_{\mathrm{rtb}}(U,V;p)=\tfrac12\,\},
\]
and from these points we obtain the exact open subintervals of $(0,1)$ on which $U$ beats $V$ (or vice versa).
Intersecting the three relevant win-interval lists yields the $p$-windows where a given oriented triple forms
a non-transitive cycle.

\medskip\noindent
\textbf{Complement symmetry and the restriction $p\ge \tfrac12$.}
Complementing all three patterns and replacing $p$ by $1-p$ produces a mirror cycle.
Consequently every cycle occurs in a pair of bias windows symmetric about $p=\tfrac12$.
In the table below we list only the $16$ distinct windows with $p\ge \tfrac12$.
All cycles listed in the table are oriented $A\to B\to C\to A$.

\begin{center}
\begin{footnotesize}
\begin{longtable}{|c|c|l|l|}
\hline
\textbf{\#} & \textbf{Lengths} & \textbf{Strings ($A,B,C$)} & \textbf{Bias interval ($p$)} \\ \hline
\endfirsthead
\hline
\textbf{\#} & \textbf{Lengths} & \textbf{Strings ($A,B,C$)} & \textbf{Bias interval ($p$)} \\ \hline
\endhead

\multicolumn{4}{|c|}{\textbf{Cycles with $L\le 5$}} \\ \hline
1 & $(2,5,5)$ & $11,\ 00000,\ 00001$ & $(0.72097\ldots,\ 0.72283\ldots)$ \\ \hline

\multicolumn{4}{|c|}{\textbf{New cycles appearing at $L=6$}} \\ \hline
2 & $(6,6,3)$ & $000000,\ 000001,\ 011$ & $(0.74274\ldots,\ 0.74448\ldots)$ \\ \hline
3 & $(6,6,3)$ & $000000,\ 000010,\ 101$ & $(0.72936\ldots,\ 0.73069\ldots)$ \\ \hline
4 & $(6,6,4)$ & $000000,\ 000100,\ 0110$ & $(0.70653\ldots,\ 0.70678\ldots)$ \\ \hline
5 & $(6,6,5)$ & $000000,\ 001100,\ 10101$ & $(0.59244\ldots,\ 0.59247\ldots)$ \\ \hline
6 & $(6,5,6)$ & $000111,\ 11111,\ 100001$ & $(0.50739\ldots,\ 0.50740\ldots)$ \\ \hline

\multicolumn{4}{|c|}{\textbf{New cycles appearing at $L=7$}} \\ \hline
7 & $(7,7,4)$ & $0000000,\ 0000100,\ 1001$ & $(0.73539\ldots,\ 0.73621\ldots)$ \\ \hline
8 & $(7,5,6)$ & $0000100,\ 01101,\ 010001$ & $(0.62149\ldots,\ 0.62150\ldots)$ \\ \hline
9 & $(7,3,5)$ & $0000100,\ 111,\ 01010$ & $(0.67205\ldots,\ 0.67251\ldots)$ \\ \hline

\multicolumn{4}{|c|}{\textbf{New cycles appearing at $L=8$}} \\ \hline
10 & $(8,8,3)$ & $00000000,\ 00000001,\ 101$ & $(0.78023\ldots,\ 0.78175\ldots)$ \\ \hline
11 & $(8,8,4)$ & $00000000,\ 00000010,\ 0011$ & $(0.77126\ldots,\ 0.77219\ldots)$ \\ \hline
12 & $(8,8,4)$ & $00000000,\ 00000100,\ 0101$ & $(0.75959\ldots,\ 0.76019\ldots)$ \\ \hline
13 & $(8,4,7)$ & $00000010,\ 1111,\ 0000011$ & $(0.62374\ldots,\ 0.62376\ldots)$ \\ \hline
14 & $(8,5,7)$ & $00000010,\ 11011,\ 0001100$ & $(0.61477\ldots,\ 0.61477\ldots)$ \\ \hline
15 & $(8,6,7)$ & $00000100,\ 011001,\ 0100001$ & $(0.62025\ldots,\ 0.62025\ldots)$ \\ \hline
16 & $(8,4,6)$ & $00001000,\ 0111,\ 001100$ & $(0.69639\ldots,\ 0.69650\ldots)$ \\ \hline

\end{longtable}
\end{footnotesize}
\end{center}

\noindent
For each listed interval, the endpoints are real algebraic numbers; certified minimal polynomials and
isolating data are contained in the accompanying exact-output file for $L\le 8$.
(Several $L=8$ windows are extremely narrow; in those cases the endpoints may agree to five decimal places.)

\subsection{Exact endpoints for \texorpdfstring{$L\le 8$}{L<=8}}
\label{appendix:census-l8}

\paragraph{Exact endpoints (computer algebra)}
For each non-transitive triple we compute the admissible bias window
$p\in(p_-,p_+)$ \emph{exactly}: the endpoints $p_\pm$ are real algebraic numbers,
recorded by their minimal polynomials (computed in Sage).
For readability the census table lists only decimal approximations.
We include two representative examples below.

\begin{itemize}\itemsep4pt
\item \textbf{Minimal $(2,5,5)$ example (listed with $p\ge \tfrac12$).}
A convenient $(2,5,5)$ representative for the minimal cycle is
\[
(A,B,C)=(11,\,00000,\,00001),
\]
which satisfies $A\to B\to C\to A$ for
\[
p\in(p_-,p_+) \approx (0.72097,\ 0.72283).
\]
Here $p_-$ is the real root (near $0.72097$) of
\[
p^{18}-6p^{17}+15p^{16}-20p^{15}+15p^{14}-6p^{13}+p^{12}+p^{10}-4p^{9}+4p^{8}
+3p^{7}-5p^{6}-p+1,
\]
and $p_+$ is the real root (near $0.72283$) of
\begin{align*}
&p^{19}-\frac{13}{2}p^{18}+19p^{17}-33p^{16}+\frac{75}{2}p^{15}-\frac{55}{2}p^{14}
+\frac{17}{2}p^{13}+\frac{19}{2}p^{12}-19p^{11}+18p^{10}-\frac{19}{2}p^{9} \\
&\qquad +\frac{1}{2}p^{8}+p^{7}+2p^{6}-\frac{3}{2}p^{5}+\frac{1}{2}p^{4}
+\frac{1}{2}p^{3}-p^{2}+p-\frac{1}{2}.
\end{align*}
(The symmetric companion window below $\tfrac12$ is obtained by $p\mapsto 1-p$ and complementing all patterns;
one such representative is $(00,\,11111,\,01111)$ with $p\in(0.27717\ldots,0.27903\ldots)$.)

\item \textbf{A representative $L=6$ example.}
For
\[
(A,B,C)=(000000,\,000001,\,011),
\]
the cycle $A\to B\to C\to A$ holds for
\[
p\in(p_-,p_+) \approx (0.74274,\ 0.74448),
\]
where $p_-$ is the real root (near $0.74274$) of
\begin{align*}
&p^{22}-9p^{21}+36p^{20}-85p^{19}+133p^{18}-147p^{17}+119p^{16}-71p^{15}
+30p^{14}-8p^{13}+p^{12}-3p^{10} \\
&\quad +12p^{9}-18p^{8}+9p^{7}+3p^{6}-3p^{5}+p^{4}+p-1,
\end{align*}
and $p_+$ is the real root (near $0.74448$) of
\begin{align*}
&p^{34}-14p^{33}+91p^{32}-364p^{31}+1001p^{30}-2001p^{29}+2990p^{28}-3354p^{27}
+2717p^{26}-1287p^{25}\\
&\quad -285p^{24}+1340p^{23}-1560p^{22}+1065p^{21}-282p^{20}-311p^{19}+452p^{18}-242p^{17}-3p^{16}+102p^{15}\\
&\quad 
-102p^{14}+62p^{13}+5p^{12}-50p^{11}+55p^{10}-48p^{9}
+27p^{8}-3p^{7}-p^{6}-p^{5}+p^{4}+p-1.
\end{align*}
\end{itemize}

\medskip
By the symmetry $p\mapsto 1-p$ (together with complementing all patterns),
each admissible window has a companion window below $1/2$; in the census we
restrict to $p\ge \tfrac12$ to avoid listing symmetric duplicates.

\medskip\noindent
\textbf{Complete exact data.}
The full $L\le 8$ classification is given in the table in Section~\ref{appendix:census}.

\end{document}